\begin{document}

\newtheorem{tm}{Theorem}[section]
\newtheorem{rk}{Remark}[section]
\newtheorem{prop}{Proposition}[section]
\newtheorem{defin}{Definition}[section]
\newtheorem{coro}{Corollary}[section]

\newtheorem{lem}{Lemma}[section]
\newtheorem{assumption}{Assumption}[section]

\newtheorem{nota}[tm]{Notation}
\numberwithin{equation}{section}

\newcommand{\stk}[2]{\stackrel{#1}{#2}}
\newcommand{\dwn}[1]{{\scriptstyle #1}\downarrow}
\newcommand{\upa}[1]{{\scriptstyle #1}\uparrow}
\newcommand{\nea}[1]{{\scriptstyle #1}\nearrow}
\newcommand{\sea}[1]{\searrow {\scriptstyle #1}}
\newcommand{\csti}[3]{(#1+1) (#2)^{1/ (#1+1)} (#1)^{- #1
 / (#1+1)} (#3)^{ #1 / (#1 +1)}}
\newcommand{\RR}[1]{\mathbb{#1}}

\newcommand{ \bl}{\color{blue}}
\newcommand {\rd}{\color{red}}
\newcommand{ \bk}{\color{black}}
\newcommand{ \gr}{\color{OliveGreen}}
\newcommand{ \mg}{\color{RedViolet}}

\newcommand{\ep}{\varepsilon}
\newcommand{\rr}{{\mathbb R}}
\newcommand{\alert}[1]{\fbox{#1}}

\newcommand{\eqd}{\sim}
\newcommand\PP{\ensuremath{\mathbb{P}}}
\def\R{{\mathbb R}}
\def\N{{\mathbb N}}
\def\Q{{\mathbb Q}}
\def\C{{\mathbb C}}
\def\l{{\langle}}
\def\r{\rangle}
\def\t{\tau}
\def\k{\kappa}
\def\a{\alpha}
\def\la{\lambda}
\def\De{\Delta}
\def\de{\delta}
\def\ga{\gamma}
\def\Ga{\Gamma}
\def\ep{\varepsilon}
\def\eps{\varepsilon}
\def\si{\sigma}
\def\Re {{\rm Re}\,}
\def\Im {{\rm Im}\,}
\def\E{{\mathbb E}}
\def\P{{\mathbb P}}
\def\Z{{\mathbb Z}}
\def\D{{\mathbb D}}
\newcommand{\ceil}[1]{\lceil{#1}\rceil}

\title{Long time behavior of random and nonautonomous  Fisher-KPP equations. Part II. Transition fronts}

\author{
Rachidi B. Salako and Wenxian Shen\thanks{Partially supported by the NSF grant DMS--1645673}\\
Department of Mathematics and Statistics\\
Auburn University\\
Auburn University, AL 36849\\
U.S.A. }

\date{}
\maketitle
\begin{abstract}
\noindent  In the current series of two papers, we  study the long time behavior of  the following random Fisher-KPP equation
$$
u_t =u_{xx}+a(\theta_t\omega)u(1-u),\quad x\in\R,
\eqno(1)
$$
where $\omega\in\Omega$, $(\Omega, \mathcal{F},\mathbb{P})$ is a given probability space, $\theta_t$ is  an ergodic metric dynamical system on $\Omega$,  and $a(\omega)>0$ for every $\omega\in\Omega$.
We also study the long time behavior of   the following nonautonomous Fisher-KPP equation,
$$
u_t=u_{xx}+a_0(t)u(1-u),\quad x\in\R,
\eqno(2)
$$
where $a_0(t)$ is a positive locally H\"older continuous function.
In the first part of the series, we studied the stability of positive equilibria and the spreading speeds of (1) and (2).
In this second part of the series, we investigate the existence and stability of transition fronts of (1) and (2).
We first study the transition fronts of (1).
 Under some proper assumption on $a(\omega)$, we show the existence of random transition fronts of (1)  with least mean speed greater than or equal to some constant $\underline{c}^*$ and the nonexistence of ranndom transition fronts of (1) with least mean speed less than $\underline{c}^*$. We  prove the stability of random transition fronts of (1) with least mean speed greater
than $\underline{c}^*$. Note that  it is proved in the first part  that $\underline{c}^*$ is  the infimum of the spreading speeds of (1).
We next study the existence and stability of transition fronts of (2).
It is not assumed that $a(\omega)$ and $a_0(t)$ are bounded above and
below by some positive constants. Many existing results in literature on transition fronts of Fisher-KPP equations have
been extended to the general cases considered in the current paper. The current paper also obtains several new results.
\end{abstract}

\medskip
\noindent{\bf Key words.} Transition front, spreading speed,  random Fisher-KPP equation, nonautonomous Fisher-KPP equation,
 ergodic metric dynamical system, subadditive ergodic theorem.

\medskip
\noindent {\bf 2010 Mathematics Subject Classification.}  35B35, 35B40, 35K57, 35Q92, 92C17.

\section{Introduction and statements of the main results}

The current series of two papers is concerned with the long time behavior   of the following random Fisher-KPP equation,
\begin{equation}\label{Main-eq}
u_t =u_{xx}+a(\theta_t\omega)u(1-u),\quad x\in\R,
\end{equation}
where $\omega\in\Omega$,  $(\Omega, \mathcal{F},\PP,\{\theta_t\}_{t\in \R})$ is an ergodic  metric dynamical system  on $\Omega$,
 $a:\Omega\to (0,\infty)$ is measurable, and $a^\omega(t):=a(\theta_t\omega)$ is locally H\"older continuous for every $\omega\in\Omega$.
It also considers the long time behavior   of  the following nonautonomous Fisher-KPP equation,
\begin{equation}\label{nonautonomous-eq}
u_t=u_{xx}+a_0(t)u(1-u), x\in\R,
\end{equation}
where $a_0:\R\to (0,\infty)$ is locally H\"older continuous.

Observe that \eqref{Main-eq} (resp. \eqref{nonautonomous-eq}) with $a(\omega)\equiv 1$ (resp. with $a_0(t)\equiv 1$) becomes
\begin{equation}
\label{fisher-kpp}
u_t=u_{xx}+u(1-u),\quad x\in\R.
\end{equation}
Equation \eqref{fisher-kpp}  is called in literature  Fisher-KPP   equation
 due to the pioneering papers of Fisher
\cite{Fisher} and Kolmogorov, Petrowsky, Piskunov \cite{KPP} on traveling wave solutions and take-over properties of \eqref{fisher-kpp}.
It is clear that the constant solution $u=1$ of \eqref{fisher-kpp} is asymptotically stable with respect to strictly positive perturbations.
Fisher in
\cite{Fisher} found traveling wave solutions $u(t,x)=\phi(x-ct)$ of \eqref{fisher-kpp}
$(\phi(-\infty)=1,\phi(\infty)=0)$ of all speeds $c\geq 2$ and
showed that there are no such traveling wave solutions of slower
speed. He conjectured that the take-over occurs at the asymptotic
speed $2$. This conjecture was proved in \cite{KPP}  {for some special initial distribution and was proved in \cite{ArWe2} for general initial distributions.
 More precisely, it is proved
in \cite{KPP} that for the  nonnegative solution $u(t,x)$ of \eqref{fisher-kpp} with
$u(0,x)=1$ for $x<0$ and $u(0,x)=0$ for $x>0$, $\lim_{t\to \infty}u(t,ct)$ is $0$ if $c>2$ and $1$ if $c<2$. It is proved
in \cite{ArWe2} that for any
nonnegative solution $u(t,x)$ of (\ref{fisher-kpp}), if at
time $t=0$, $u$ is $1$ near $-\infty$ and $0$ near $\infty$, then
$\lim_{t\to \infty}u(t,ct)$ is $0$ if $c>2$ and $1$ if $c<2$.}
In
literature, $c^*=2$ is   called the {\it
spreading speed} for \eqref{fisher-kpp}.

A huge amount of research has been carried out toward various extensions of
 traveling wave solutions and take-over properties  of \eqref{fisher-kpp} to general time and space independent
as well as time and/or space dependent Fisher-KPP type  equations.
See, for example,  \cite{ArWe1, ArWe2, Bra, Ham, Kam, Sat, Uch}, etc., for
the extension to general time and space independent Fisher-KPP type equations;  see
 \cite{BeHaNa1,  BeHaRo,  FrGa, HaRo, HaRos,  HuZi1,
LiYiZh, LiZh, LiZh1,  Nad,  NoRuXi, NoXi,  Wei1, Wei2},  and references therein for
the extension to time and/or space periodic Fisher-KPP
type equations; and see
\cite{BeHaRoq, BeHaRos,BeHaNa2, BeHa07, BeHa12,  HePaSt, HuSh,  Mat, Nad1,  NaRo1, NaRo2, NaRo3, NoRoRyZl,  She6, She7, She8,  She9, TaZhZl, Xin1, Zla}, and references therein for
the extension to quite general time and/or space dependent Fisher-KPP
type equations. The reader is referred to \cite{FaHuWu, HeWu, ZoWu}, etc.
   for the study of Fisher-KPP reaction diffusion equations with time delay.

 All the existing works on \eqref{Main-eq} (resp. \eqref{nonautonomous-eq})  assumed $\inf_{t\in\R} a^\omega(t)>0$ and $a^\omega(\cdot)\in L^\infty(\R)$ (resp. $\inf_{t\in\R} a_0(t)>0$ and $\sup_{t\in\R} a_0(t)<\infty$).
The objective of the current series of two papers  is to study the long time behavior, in particular, the stability of positive constant solutions, the spreading speeds, and the transition fronts  of \eqref{Main-eq} (resp. \eqref{nonautonomous-eq}) without the assumption $\inf_{t\in\R} a^\omega(t)>0$ and $a^\omega(\cdot)\in L^\infty(\R)$ (resp. without the assumption $\inf_{t\in\R} a_0(t)>0$ and $\sup_{t\in\R} a_0(t)<\infty$). The applications of the results established for \eqref{Main-eq} to Fisher-KPP equations whose growth rate and/or carrying
 capacity are perturbed by real noises will  also be discussed.

In the first part of the series, we studied the stability of positive constant solutions and the spreading speeds of \eqref{Main-eq} and
\eqref{nonautonomous-eq} (see section 2 for the review of some results established in the first part).
In this second part of the series, we investigate the existence and stability of transitions fronts of \eqref{Main-eq} and
\eqref{nonautonomous-eq}.
Note that the so called periodic traveling wave solutions or pulsating traveling fronts to time and/or space periodic
reaction diffusion equations are
 natural extension of the notion of traveling wave solutions in the classical sense,  and that  the so called transition fronts or generalized traveling waves
 to general time and/or space dependent reaction equations are the natural extension of the notion  traveling wave solutions in the classical sense
 (see  \cite{BeHa07, BeHa12} for the introduction of the notion of transition fronts or generalzed traveling waves  in the general case, and
  \cite{Mat, She4, She7, She8} for the time almost periodic or space almost periodic cases).

 It should be pointed out that the work \cite{NaRo1} studied the existence of transition fronts of \eqref{Main-eq} with mean speed
 greater than  some number $c^*$ under the assumption
that $\inf_{t\in\R} a^\omega(t)>0$ and $a^\omega(\cdot)\in L^\infty(\R)$. The work \cite{NaRo1} also studied the existence of transition fronts of \eqref{nonautonomous-eq}
 with least  mean speed
 greater than  some number $c^*$  under the assumption
that $\inf_{t\in\R} a_0(t)>0$ and $\sup_{t\in\R} a_0(t)<\infty$. Among others, the work \cite{She9} considered the stability of
transition fronts of \eqref{nonautonomous-eq}
 under the assumption
that $\inf_{t\in\R} a_0(t)>0$ and $\sup_{t\in\R} a_0(t)<\infty$.

The objective of the current paper is to study the existence and stability of transition fronts of \eqref{Main-eq}  without the assumption $\inf_{t\in\R} a^\omega(t)>0$ and $a^\omega(\cdot)\in L^\infty(\R)$, and to study the existence and stability
 of transition fronts of \eqref{nonautonomous-eq}  without the assumption $\inf_{t\in\R} a_0(t)>0$ and $\sup_{t\in\R} a_0(t)<\infty$.
 Most results in \cite{NaRo1} and \cite{She9} are extended to such general cases and some new results are  obtained in this paper.

We first consider \eqref{Main-eq}. As in the first part, we introduce the following notations and assumption related to \eqref{Main-eq}.  Let
\begin{equation}\label{a-least-mean}
\underline{a}(\omega)=\liminf_{t-s\to\infty}
\frac{1}{t-s}\int_s^ta(\theta_{\tau}\omega)d\tau:=\lim_{r\to\infty}\inf_{t-s\ge r}\frac{1}{t-s}\int_s^ta(\theta_{\tau}\omega)d\tau
\end{equation}
and
\begin{equation}
\label{a-largest-mean}
\overline{a}(\omega)=\limsup_{t-s\to\infty}\frac{1}{t-s}\int_s^ta(\theta_\tau\omega)d\tau:=\lim_{r\to\infty}\sup_{t-s\ge r}\frac{1}{t-s}\int_s^ta(\theta_{\tau}\omega)d\tau.
\end{equation}
 Observe that
\begin{equation}
\label{a-a-eq1}
\underline{a}(\theta_t\omega)=\underline{a}(\omega)\quad {\rm and}\quad \overline{a}(\theta_t\omega)=\overline{a}(\omega),\,\, \forall\,\, t\in\R,
\end{equation}
and that
$$
\underline{a}(\omega)=\liminf_{t,s\in\Q,t-s\to\infty}\frac{1}{t-s}\int_s^t a(\theta_\tau)d\tau\,\, \,\, {\rm and}\,\, \,\, \overline{a}(\omega)=\liminf_{t,s\in\Q,t-s\to\infty}\frac{1}{t-s}\int_s^t a(\theta_\tau)d\tau.
$$
Then by the countability of the set $\Q$ of rational numbers, both $\underline{a}(\omega)$ and $\overline{a}(\omega)$ are measurable in $\omega$.

Throughout this paper, we assume that  the following standing assumption holds.
\medskip

\noindent {\bf (H1)}
{\it  $0< \underline{a}(\omega)\le \overline{a}(\omega)<\infty$ for a.e. $\omega\in\Omega$.}

\medskip
Note that
  {\bf (H1)} implies that $\underline{a}(\cdot),a(\cdot),\hat a(\cdot)\in  L^1 (\Omega, \mathcal{F},\PP)$ (see Lemma \ref{prelim-lm1}).
Assume {\bf (H1)}. Then by the ergodicity of the metric dynamical system $(\Omega, \mathcal{F},\PP,\{\theta_t\}_{t\in \R})$, there are
 $\hat a, \underline{a}, \bar a\in\R^+$ and a measurable subset $\Omega_0\subset\Omega$ with $\P(\Omega_0)=1$ such that
 \begin{equation}
 \label{omega-0}
 \begin{cases}
 \theta_t\Omega_0=\Omega_0\quad \forall\,\, t\in\R\cr
 \lim_{t\to \pm \infty}\frac{1}{t}\int_0^t a(\theta_\tau\omega)d\tau=\hat a\quad \forall\,\, \omega\in\Omega_0\cr
 \liminf_{t-s\to\infty}\frac{1}{t-s}\int_s^t a(\theta_\tau\omega)d\tau =\underbar a\quad \forall\,\,\omega\in\Omega_0\cr
 \limsup_{t-s\to\infty}\frac{1}{t-s}\int_s^t a(\theta_\tau)d\tau=\bar a\quad\forall\,\, \omega\in\Omega_0.
 \end{cases}
 \end{equation}

In the following, we roughly state the main results of the current  paper.
For given $u_0\in X:=C_{\rm unif}^b(\R)$ and $\omega\in\Omega$, let $u(t,x;u_0,\omega)$ be the solution of \eqref{Main-eq} with $u(0,x;u_0,\omega)=u_0(x)$.
Note that, for $u_0\in X$ with $u_0\ge 0$, $u(t,x;u_0,\omega)$  exists  for $t\in [0,\infty)$ and $u(t,x;u_0,\omega)\ge 0$ for all $t\ge 0$.
Note also that $u\equiv 0$ and $u\equiv 1$ are two constant solutions of \eqref{Main-eq}.

A solution $u(t,x;\omega)$ of \eqref{Main-eq} is called
an {\it entire solution} if it is a solution on $t\in(-\infty,\infty)$.
An entire solution $u(t,x;\omega)$ is called a {\it random  traveling wave solution} or a {\it random transition front} of \eqref{Main-eq} connecting  $1$ and $0$ if for a.e. $\omega\in\Omega$,
\begin{equation}
\label{front-eq}
u(t,x;\omega)=U(x-C(t;\omega),\theta_t\omega)
 \end{equation}
 for some $U(\cdot,\omega)$ and $C(\cdot;\omega)$,  where $U(x,\omega)$ and $C(t;\omega)$ are measurable in $\omega$, and for a.e.
 $\omega\in\Omega$,
\begin{equation}
\label{tail-eq}
0<U(x,\omega)<1,\,\, {\rm and}\,\,
\lim_{x\to -\infty}U(x,\theta_t\omega)=1,\,\, \lim_{x\to\infty}U(x,\theta_t\omega)=0\,\,\, \text{uniformly in}\,\,  t\in\R.
\end{equation}
We may write $\lim_{x\to -\infty}U(x,\omega)=1$ and $\lim_{x\to\infty}U(x,\omega)=0$ (if the limits exist)  as $U(-\infty,\omega)=1$ and $U(\infty,\omega)=0$, respectively.
Suppose that $u(t,x;\omega)=U(x-C(t;\omega);\theta_t\omega)$ is a random transition front of \eqref{Main-eq}.
 If $U_x(x,\omega)<0$ for a.e. $\omega\in\Omega$ and all $x\in\R$, $u(t,x;\omega)=U(x-C(t;\omega);\theta_t\omega)$ is said to be
a {\it monotone random transition front}.  If there is $\underbar c\in\R$ such that  for a.e. $\omega\in\Omega$,
\begin{equation}
\label{least-mean-speed-eq}
\liminf_{t-s\to\infty}\frac{C(t;\omega)-C(s;\omega)}{t-s}=\underbar c,
\end{equation}
then $\underline{c}$
is called its {\it least mean speed} or {\it least average speed}. If there is $\hat c\in\R$ such that for a.e. $\omega\in\Omega$,
\begin{equation}
\label{mean-speed-eq}
{ \lim_{t\to\infty}\frac{C(t;\omega)}{t}=\hat c,}
\end{equation}
then $\hat c$ is called its {\it mean speed} or {\it average speed}.

 Recall that when $a(\omega)\equiv a$, \eqref{Main-eq} becomes classical Fisher-KPP equation, and  that the classical Fisher-KPP equation has a unique (up to phase translation) traveling wave solution $u(t,x)=\phi(x-ct)$ connecting $u=0$ and $u=1$,
(i.e. $\phi(-\infty)=1$ and $\phi(\infty)=0$) with speed $c\ge 2 \sqrt a$ and has no such traveling wave solution with speed $c<2\sqrt{a}$.
%\textbf{(W.S. recall some  results for classical Fisher-KPP equation)}

%To study random transition front of \eqref{Main-eq} connecting  $1$ and $0$, we first study the stability of $u\equiv 1$. Note that
%under the assumption {\bf (H1)}, the trivial solution $u\equiv 0$ of \eqref{Main-eq} is unstable. We prove

%\medskip

%\noindent {\bf i)}  {\it  Assume {\bf (H1)}.  Then $u\equiv 1$ is asymptotically stable with respect to strictly positive perturbation, that is, %for any $u_0\in X$ with $\inf_{x\in\R}u_0(x)>0$,
%$$
%\lim_{t\to\infty}\|u(t,\cdot;u_0,\omega)-1\|_\infty=0
%$$
%for a.e. $\omega\in\Omega$ (See Theorem \ref{stability of const equi solu thm}). }

%\medskip

%{Theorem \ref{stability of const equi solu thm} in fact proves the stability of the constant equilibrium solution $u\equiv 1$ under a weaker %condition than {\bf (H1)}, that is,  for fixed $\omega\in\Omega$, the constant equilibrium solution $u\equiv 1$ of \eqref{Main-eq} is stable %(with respect to strictly positive perturbation) if and only if $\int_0^ta(\theta_s\omega)ds$ converges to infinity as $t\to\infty$ (See Theorem %\ref{stability of const equi solu thm} for the detail).

 Let
\begin{equation}
\label{minimal-speeds-eq}
\underline{c}^*=2\sqrt {\underline{a}},\quad  \hat c^*=2\sqrt{\hat{a}},\quad   {\rm and}\quad \overline{c}^*=2\sqrt {\overline{a}}.
\end{equation}
 We have the following results on the existence and stability of random  transition fronts  of \eqref{Main-eq} connecting $1$ and $0$.
% and spreading properties of solutions of \eqref{Main-eq} with compactly supported initial functions.

\medskip

\noindent {\bf i)}  (Existence of random transition fronts)  {\it For any given $c > \underline{c}^*$, there is a  monotone
random transition front of \eqref{Main-eq} with least mean speed $\underline{c}=c$}  (see Theorem  \ref{Existence of random transition front}(1)).

\smallskip

\noindent {\bf ii)} (Existence of critical random transition fronts)  {\it There is a
 monotone random transition front of \eqref{Main-eq}  with least mean speed $\underline{c}=\underline{c}^*$}  (see Theorem  \ref{Existence of random transition front}(2)).

\medskip

\noindent {\bf iii)} (Nonexistence of random transition fronts) {\it There is no random transition front of \eqref{Main-eq}  with  least   mean speed  less than  $\underline{c}^*$}  (see Theorem  \ref{Existence of random transition front}(3)).

\medskip

\noindent {\bf iv)} (Stability of random transition fronts) {\it The random transition front established in ii)  is  asymptotically stable}   (see  Theorem \ref{Stability tm}).

\medskip

\noindent {\bf v)} (Average speed of random transition fronts) {\it Suppose that $u(t,x;\omega)=U(x-C(t;\omega);\theta_t\omega)$ is a
 monotone random transition front of \eqref{Main-eq}. Then its  average speed { $\hat c=\lim_{t\to\infty}\frac{C(t;\omega)}{t}$}  exists and $\hat c\ge 2\sqrt{\hat a}$} (see Theorem \ref{average-speed-thm}).

\medskip

%{\bl  Observe that (i)  implies that \eqref{Main-eq} has random transition wave with  mean speed $c$ for every $c>\frac{\underline{a}+\hat{a}}{\sqrt{\underline{a}}}$. It follows from  (ii) and its proof that  \eqref{Main-eq} has a random transition wave with mean speed in $[2\sqrt{\underline{a}},\, \frac{\underline{a}+\hat{a}}{\sqrt{\underline{a}}}]$. It also follows from (iii) that \eqref{Main-eq} has no random transition with mean speed less that $2\sqrt{\underline{a}}$. Hence, it would be of great mathematical interest to know whenever \eqref{Main-eq} has a "minimum" mean speed $\hat{c}$ in the sense that \eqref{Main-eq} has random transition wave with mean speed $c$ for every $c\geq \hat{c}$ and has no random transition wave with mean speed less than $\hat{c}$. It clear from the result stated stated above that if {\bf (H1)}hods and $\underline{a}=\hat{a}$, then $\hat{c}=2\sqrt{\hat{a}}$.}

\medskip

Next, we consider \eqref{nonautonomous-eq}. A solution $u(t,x)$ of \eqref{nonautonomous-eq} is called
an {\it entire solution} if it is a solution on $t\in(-\infty,\infty)$.
An entire solution $u(t,x)$ of  \eqref{nonautonomous-eq} is called  a {\it transition front} of \eqref{nonautonomous-eq} connecting  $1$ and $0$ if
\begin{equation}
\label{front--nonauton-eq}
u(t,x)=U(x-C(t),t)
 \end{equation}
 for some $U(x,t)$ and $C(t)$,  where $U(x,t)$ satisfies
\begin{equation}
\label{tail-nonauton-eq}
0<U(x,t)<1,\quad
U(-\infty,t)=1\quad {\rm and}\quad U(\infty,t)=0\quad  \text{uniformly in}\,\,  t\in\R.
\end{equation}
Suppose that $u(t,x)=U(x-C(t),t)$ is a transition front of \eqref{nonautonomous-eq}.  Then
\begin{equation}
\label{least-mean-speed-nonauton-eq}
\underline{c}=\liminf_{t-s\to\infty}\frac{C(t)-C(s)}{t-s}
\end{equation}
is called its {\it least  mean speed}.  Let {\bf (H2)} be the following standing assumption.

\medskip

\noindent {\bf (H2)}  {\it  $0< \underline{a}_0:= \liminf_{t-s\to\infty}\frac{1}{t-s}\int_s^t a_0(\tau)d\tau\le \overline{a}_0:=\limsup_{t-s\to\infty}\frac{1}{t-s}\int_s^ta_0(\tau)d\tau<\infty$.}

\medskip

The assumption {\bf (H2)} is the analogue of {\bf (H1)}. We will give some example for $a_0(\cdot)$ in section 6.

Assume  {\bf (H2)}.  Let
\begin{equation}
\bar c_0^*=2\sqrt{\bar{a}_0}\quad {\rm and}\quad \underline{c}_0^*=2\sqrt {\underline{a}_0}.
\end{equation}
Similar to i)-iv) for \eqref{Main-eq},  we have the following results on the existence and stability of   transition fronts  of \eqref{nonautonomous-eq} connecting $1$ and $0$.

\medskip

\noindent {\bf i$)'$}  (Existence of  transition fronts)  {\it For any given  $c>\underline{c}_0^*$, there is a
 transition front of \eqref{nonautonomous-eq}  with least mean speed $\underline{c}=c$  (see Theorem  \ref{nonautonomous-thm1}(1)).}

\smallskip

\noindent {\bf ii$)'$} (Existence of critical  transition fronts)  {\it There is a
 transition front of \eqref{nonautonomous-eq}  with least mean speed $\underline{c}=\underline{c}_0^*$}  (see Theorem  \ref{nonautonomous-thm1}(2)).

\medskip

\noindent {\bf iii$)'$} \noindent { (Nonexistence of  transition fronts) {\it There is no  transition front of \eqref{nonautonomous-eq}  with  least   mean speed  less than  $\underline{c}_0^*$} (see Theorem  \ref{nonautonomous-thm1}(3)).

\medskip

\noindent {\bf iv$)'$} (Stability of transition fronts) {\it The  transition front established in i$)'$  is  asymptotically stable}   (see  Theorem \ref{nonautonomous-thm2}).

\medskip

We conclude the introduction with the following three remarks.

 First,  the results i$)'$ and  iii$)'$ extend  \cite[Theorem 2.3(1)]{NaRo1} and  \cite[Theorem 2.3(2)]{NaRo1} for \eqref{nonautonomous-eq}
with $0<\inf_{t\in\R}a_0(t)\le \sup_{t\in\R}a_0(t)<\infty$ to more general $a_0(t)$,  respectively,} and the result i) extends \cite[Theorem 2.5]{NaRo1} for \eqref{Main-eq} with
$0<\inf_{\omega\in\Omega}a(\omega)\le \sup_{\omega\in\Omega}a(\omega)<\infty$ to more general $a(\omega)$. The result iv$)'$ extends \cite[Theorem 2.2]{She9} for \eqref{nonautonomous-eq} with $0<\inf_{t\in\R}a_0(t)\le \sup_{t\in\R}a_0(t)<\infty$ to more general $a_0(t)$. The results ii), and ii$)'$ are new even for the cases that $0<\inf_{\omega\in\Omega}a(\omega)\le \sup_{\omega\in\Omega}a(\omega)<\infty$
and $0<\inf_{t\in\R}a_0(t)\le \sup_{t\in\R}a_0(t)<\infty$.

\smallskip

Second,  the results established for \eqref{Main-eq} and \eqref{nonautonomous-eq} can be applied to the following general random Fisher-KPP equation,
\begin{equation}
\label{general-random-eq}
u_t=u_{xx}+u(r(\theta_t\omega)-\beta(\theta_t\omega) u),
\end{equation}
where $r:\Omega\to (-\infty,\infty)$ and $\beta:\Omega\to (0,\infty)$ are measurable with locally H\"older continuous sample paths $r^\omega(t):=r(\theta_t\omega)$ and
$\beta^\omega(t):=\beta(\theta_t\omega)$,
and to the following general nonautonomous Fisher-KPP equation,
\begin{equation}
\label{general-nonautonomous-eq}
u_t=u_{xx}+u(r_0(t)-\beta_0(t) u),
\end{equation}
where $r_0:\R\to \R$ and $\beta_0:\R\to (0,\infty)$ are locally H\"older continuous. Note that \eqref{general-random-eq} models the population growth of a species with random
perturbations on its growth rate and carrying capacity, and \eqref{general-nonautonomous-eq} models the population growth of a species
with deterministic time dependent perturbations on its growth rate and carrying capacity.

In fact, under some assumptions on $r(\omega)$ and $\beta(\omega)$, it can be proved that
$$
u(t;\omega):=Y(\theta_t\omega)=\frac{1}{\int_{-\infty}^0 e^{-\int_s ^0 r(\theta_{\tau+t}\omega)d\tau}\beta(\theta_{s+t}\omega)ds}
$$
is a random equilibrium of \eqref{general-random-eq}. Let $\tilde u=\frac{u}{Y(\theta_t\omega)}$ and drop the tidle, \eqref{general-random-eq} becomes
\eqref{Main-eq} with $a(\theta_t\omega)=\beta(\theta_t\omega)\cdot Y(\theta_t\omega)$, and then the results established for \eqref{Main-eq}
can be applied to \eqref{general-random-eq}.
For example, consider the following random Fisher-KPP equation,
\begin{equation}
\label{real-noise-eq}
u_t=u_{xx}+u(1+ \xi(\theta_t\omega) -u),\quad x\in\R,
\end{equation}
 where $\omega\in\Omega$,  $(\Omega, \mathcal{F},\PP,\{\theta_t\}_{t\in \R})$ is an ergodic  metric dynamical system,  $\xi:\Omega\to \R$ is measurable,  and $\xi_t(\omega):=\xi(\theta_t\omega)$ is locally H\"older continuous
($\xi_t$ denotes a real noise or a colored noise).
Assume that $\xi_t(\cdot)$ satisfies the following {\bf (H3)}.

\medskip

\noindent  {\bf (H3)}  {\it $\xi:\Omega\to\R$ is measurable, $\int_\Omega |\xi(\omega)|d\PP(\omega)<\infty$,
$\int_\Omega \xi(\omega)d\PP(\omega)=0$,  $-1<\underline{\xi}(\omega)\le \overline{\xi}(\omega)<\infty$, { $\xi_{\inf}(\theta_{\cdot}\omega) :=\inf_{t\in\R} \xi(\theta_t\omega)>-\infty$} for a.e. $\omega\in\Omega$,  and
 $\xi^\omega(t):=\xi(\theta_t\omega)$ is locally H\"older continuous.  }

 \medskip

 It  can be proved that \cite{Arn}
 \begin{equation}
 \label{random-equilibrium-1}
 Y(\omega)=\frac{1}{\int_{-\infty}^0 e^{ s+\int_0^s \xi(\theta_\tau\omega)d\tau}ds}
 \end{equation}
  is a spatially homogeneous asymptotically stable
 random equilibrium of \eqref{real-noise-eq} (see Theorem \ref{real-noise-tm1}),  and the following can also be proved.

 \medskip

 \noindent {\bf v)} {\it For any $\underline{c}\ge 2\sqrt{1+\underline{\xi}}$,
  \eqref{real-noise-eq} has a random transition wave solution $u(x,t)=U(x-C(t;\omega,\mu);\theta_t\omega)$ connecting
  $u=Y(\theta_t\omega)$ and $u\equiv 0$ with least mean speed $\underline{c}$, and  \eqref{real-noise-eq} has no random transition wave solution  connecting
  $u=Y(\theta_t\omega)$ and $u\equiv 0$  with least mean speed less than $2\sqrt{1+\underline{\xi}}$} (see Corollary \ref{real-noise-cor}).

\medskip

\medskip

Third, it is interesting to study front propagation dynamics of \eqref{Main-eq} with {\bf (H1)} being replaced by the following weaker assumption,
\medskip

\noindent {\bf (H1$)'$} $0<\hat a:=\int_\Omega a(\omega)d\PP(\omega)<\infty$.

\medskip

\noindent We plan to study this general case somewhere else, which would have applications to the study of the front propagation dynamics of
the following stochastic Fisher-KPP equation,
\begin{equation}
\label{white-noise-eq}
d u=(u_{xx}+u(1-u))dt+\sigma u dW_t,\quad x\in\R,
\end{equation}
where $W_t$ denotes the standard two-sided Brownian motion ($dW_t$ is then the white noise).
In fact, Let $ \Omega:=\{\omega\in C(\R,\R)\ |\  \omega(0)=0\ \}$ equipped with the open compact topology, $\mathcal{F}$ be the Borel $\sigma-$field and $\mathbb{P}$ be the Wiener measure on $(\Omega, \mathcal{F})$. Let $W_t$ be the one dimensional Brownian motion on the Wiener space $(\Omega,\mathcal{F},\mathbb{P})$ defined by $W_t(\omega)=\omega(t)$. Let $\theta_t\omega$ be the canonical Brownian shift: $(\theta_t\omega)(\cdot)=\omega(t+\cdot)-\omega(t)$ on $\Omega$. It is easy to see that $W_t(\theta_s\omega)=W_{t+s}(\omega)-W_s(\omega)$.
  If $\frac{\sigma^2}{2}<1$,
then it can be proved that
\begin{equation}
\label{random-equilibrium-eq2}
Y(\omega)=\frac{1}{\int_{-\infty}^0 e^{(1-\frac{\sigma^2}{2})s+\sigma W_s (\omega)ds}}
\end{equation}
 is a spatially homogeneous stationary solution process of \eqref{white-noise-eq}. Let $\tilde u=\frac{u}{Y(\theta_t\omega)}$ and drop the tidle, \eqref{white-noise-eq} becomes
\eqref{Main-eq} with $a(\theta_t\omega)= Y(\theta_t\omega)$. The reader is referred to \cite{HuLi1, HuLi2, HuLiWa, JiJiSh, OkVaZh1, OkVaZh2}
for some study on the front propagation dynamics of \eqref{random-equilibrium-eq2}.

The rest of the paper is organized as follows. In section 2, we present some preliminary lemmas and recall some results established in the first part of the series.
 We investigate the existence of random transition fronts of \eqref{Main-eq} and the stability of  random transition fronts of \eqref{Main-eq}
 in sections 3 and 4, respectively.
We consider transition fronts of \eqref{nonautonomous-eq} in section 5.

\section{Preliminary}

In this section, we present some preliminary lemmas to be used in later sections and recall some of the main results proved in the first part.

First, we present some preliminary lemmas.

\begin{lem}
\label{prelim-lm1} (\cite[Lemma 2.1]{SaSh1})
  {\bf (H1)} implies that $\underline{a}(\cdot),a(\cdot),\hat a(\cdot)\in  L^1 (\Omega, \mathcal{F},\PP)$ and that
$\underline{a}(\omega)$ and $\bar a(\omega)$ are independent of $\omega$ for a.e. $\omega\in\Omega$.
\end{lem}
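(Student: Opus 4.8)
The statement bundles two facts: that the $\theta$-invariant quantities $\underline a(\cdot)$ and $\bar a(\cdot)$ are a.e.\ constant, and that $a$, $\underline a$, $\hat a$ all lie in $L^1(\Omega,\mathcal F,\PP)$. I would dispose of the constancy first, since it is the cleanest step and it feeds the integrability argument. By \eqref{a-a-eq1}, both $\underline a(\cdot)$ and $\bar a(\cdot)$ are invariant under the entire flow $\{\theta_t\}_{t\in\R}$, i.e.\ $\underline a(\theta_t\omega)=\underline a(\omega)$ and $\bar a(\theta_t\omega)=\bar a(\omega)$ for every $t$. Since $(\Omega,\mathcal F,\PP,\{\theta_t\})$ is ergodic, any measurable function invariant under the flow is constant a.e.; hence there are constants $\underline a,\bar a$ with $\underline a(\omega)=\underline a$ and $\bar a(\omega)=\bar a$ for a.e.\ $\omega$, and by {\bf (H1)} these satisfy $0<\underline a\le\bar a<\infty$. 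In particular $\underline a(\cdot)$, being a.e.\ equal to a finite constant, is trivially in $L^1$.

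The heart of the proof, and the only place I expect real work, is showing $a\in L^1(\Omega)$. The plan is to truncate: set $a_n:=\min(a,n)$, so each $a_n$ is bounded, measurable, and hence in $L^1$. Applying the continuous-time Birkhoff ergodic theorem to $a_n$ gives, for a.e.\ $\omega$,
\[
\lim_{T\to\infty}\frac1T\int_0^T a_n(\theta_\tau\omega)\,d\tau=\E[a_n].
\]
I would then compare with the full upper mean: taking the subfamily $s=0$ in \eqref{a-largest-mean} shows
\[
\limsup_{T\to\infty}\frac1T\int_0^T a(\theta_\tau\omega)\,d\tau\le\bar a(\omega)=\bar a
\]
for a.e.\ $\omega$, because the $\limsup$ over the larger index set $\{t-s\to\infty\}$ dominates the one over the subfamily $\{s=0,\ t\to\infty\}$. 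Since $a_n\le a$ pointwise, the Ces\`aro averages obey $\frac1T\int_0^T a_n\le\frac1T\int_0^T a$, and letting $T\to\infty$ yields $\E[a_n]\le\bar a$ for every $n$. This bound is uniform in $n$ precisely because $\bar a$ is the a.e.\ \emph{constant} value from the first step; this is exactly why the constancy must be established first. Monotone convergence then gives $\E[a]=\lim_n\E[a_n]\le\bar a<\infty$, so $a\in L^1$.

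Finally, once $a\in L^1$, Birkhoff's theorem applies to $a$ itself and produces $\lim_{T\to\pm\infty}\frac1T\int_0^T a(\theta_\tau\omega)\,d\tau=\E[a]=:\hat a$ for a.e.\ $\omega$; being a.e.\ equal to the finite constant $\E[a]$, the function $\hat a(\cdot)$ lies in $L^1$, which closes all three integrability claims. The genuine obstacle is just the uniform-in-$n$ control of $\E[a_n]$; everything else is ergodicity or a one-line comparison of index sets. The one technical point I would flag is the legitimacy of the continuous-time ergodic theorem, which requires joint measurability of $(t,\omega)\mapsto a_n(\theta_t\omega)$ — this is guaranteed here by the standing hypothesis that $t\mapsto a(\theta_t\omega)$ is locally H\"older continuous together with the measurability of $a$.
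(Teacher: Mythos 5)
Your proof is correct. One structural point worth noting: this paper does not prove Lemma \ref{prelim-lm1} at all --- it is imported from Part I of the series (\cite[Lemma 2.1]{SaSh1}) --- so there is no in-paper argument to compare against; your write-up is a valid self-contained substitute. Your route is the natural one: flow-invariance \eqref{a-a-eq1} plus ergodicity gives a.e.\ constancy of $\underline{a}(\cdot)$ and $\bar a(\cdot)$; then truncation $a_n=\min(a,n)$, Birkhoff's theorem for the flow, the subfamily comparison $\limsup_{T\to\infty}\frac1T\int_0^T a(\theta_\tau\omega)\,d\tau\le \bar a(\omega)=\bar a$, and monotone convergence give $a\in L^1$; Birkhoff applied to $a$ itself then handles $\hat a(\cdot)$. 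You also correctly isolate the only delicate point: the bound $\E[a_n]\le\bar a$ is uniform in $n$ precisely because $\bar a$ has already been shown to be an a.e.\ finite \emph{constant} --- without that step one would only get $\E[a_n]\le\E[\bar a(\cdot)]$, which is not yet known to be finite, and the monotone convergence step would collapse. The ordering of the two halves of your argument is therefore essential, and you have it right.
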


\begin{lem}\label{prelim-lm2} (\cite[Lemma 2.2]{SaSh1})  Suppose that $b\in C(\R, (0,\infty))$  and that $0<\underline{b}\leq \overline{b}<\infty$,  where
$$
\underline{b}=\liminf_{t-s\to\infty}\frac{1}{t-s}\int_s^t b(\tau)d\tau,\quad \bar b=\limsup_{t-s\to\infty}\frac{1}{t-s}\int_s^t b(\tau)d\tau,
$$ then
\begin{equation}
\underline{b}=\sup_{B\in W^{1,\infty}_{\rm loc}(\R)\cap L^{\infty}(\R)}{\rm essinf_{\tau\in\R}(b(\tau)-B'(\tau))}.
\end{equation}
\end{lem}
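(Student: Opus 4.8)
The plan is to prove the two inequalities separately. Write
$$
\Lambda := \sup_{B \in W^{1,\infty}_{\rm loc}(\R)\cap L^\infty(\R)} \operatorname{essinf}_{\tau\in\R}\bigl(b(\tau)-B'(\tau)\bigr),
$$
so that the goal is $\Lambda=\underline b$. For the easy inequality $\Lambda\le\underline b$, I would fix an admissible $B$ and set $m:=\operatorname{essinf}_\tau(b-B')$. Since $B$ is locally Lipschitz it is absolutely continuous, so for every $s<t$,
$$
\int_s^t b(\tau)\,d\tau-\bigl(B(t)-B(s)\bigr)=\int_s^t\bigl(b(\tau)-B'(\tau)\bigr)d\tau\ge m(t-s).
$$
Dividing by $t-s$ and using $|B(t)-B(s)|\le 2\|B\|_{L^\infty}$, the boundary term is $O(1/(t-s))$, so letting $t-s\to\infty$ gives $\underline b\ge m$. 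Taking the supremum over $B$ yields $\underline b\ge\Lambda$; this direction is routine.

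The substance is the reverse inequality $\Lambda\ge\underline b$, for which I would construct, for each small $\epsilon\in(0,\underline b)$, an admissible $B$ with $\operatorname{essinf}_\tau(b-B')\ge\underline b-\epsilon$. Set $\beta:=\underline b-\epsilon>0$ and $g:=b-\beta$, so that $g$ is continuous and its least mean equals $\underline b-\beta=\epsilon>0$. Let $G(t):=\int_0^t g$ and define the running maximum
$$
\Phi(t):=\sup_{\tau\le t}G(\tau),\qquad B:=G-\Phi.
$$
By construction $\Phi$ is nondecreasing, hence $\Phi'\ge 0$ a.e.; since $B'=g-\Phi'$ a.e., this gives $b-B'=\beta+\Phi'\ge\beta$ a.e., which is exactly the desired essential lower bound.

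It then remains to check $B\in W^{1,\infty}_{\rm loc}\cap L^\infty$. On any compact interval $G$ is Lipschitz (as $G'=g$ is continuous), and the running maximum of a Lipschitz function is Lipschitz with the same constant, so $\Phi$, and hence $B$, lies in $W^{1,\infty}_{\rm loc}$. Boundedness is the crux: writing $\Phi(t)-G(t)=\sup_{\tau\le t}\bigl(-\int_\tau^t g\bigr)$, I would bound $\int_\tau^t g$ from below uniformly in $\tau\le t$. For $t-\tau$ larger than some $r_0$ (furnished by $\liminf_{t-s\to\infty}\frac1{t-s}\int_s^t g=\epsilon$) one has $\int_\tau^t g\ge\frac{\epsilon}{2}(t-\tau)\ge 0$; for $t-\tau<r_0$, the positivity of $b$ gives $g>-\beta$ and hence $\int_\tau^t g>-\beta r_0$. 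Thus $\int_\tau^t g\ge-\beta r_0=:-C$ for all $\tau\le t$, so $0\le\Phi(t)-G(t)\le C$, i.e. $-C\le B\le 0$ and $B\in L^\infty$. Therefore $\Lambda\ge\underline b-\epsilon$ for every $\epsilon$, and letting $\epsilon\downarrow 0$ finishes the proof.

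The main obstacle is precisely the uniform boundedness of the corrector $B$: the running-maximum construction automatically delivers the a.e. inequality $b-B'\ge\beta$, but controlling $B$ from below forces a split into long intervals (handled by the positive least mean of $g$) and short intervals (handled by $b>0$, which is essential here since $b$ is \emph{not} assumed bounded above).
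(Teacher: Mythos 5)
The paper does not prove this lemma---it is quoted from Part I (\cite[Lemma 2.2]{SaSh1})---so there is no in-paper proof to compare against; judged on its own, your argument is correct. It is also essentially the standard proof of this characterization (the corrector construction of Nadin--Rossi used in the cited source): the easy inequality via $B(t)-B(s)=\int_s^t B'(\tau)\,d\tau$ together with $\|B\|_{L^\infty}<\infty$, and the reverse one via the bounded running-maximum corrector $B(t)=\inf_{\tau\le t}\int_\tau^t\bigl(b(\sigma)-\underline{b}+\varepsilon\bigr)\,d\sigma$ (your $G-\Phi$ is exactly this), where your split into long intervals (controlled by the positive least mean) and short intervals (controlled by $b>0$) correctly supplies the boundedness of $B$ that is not automatic precisely because $b$ is not assumed bounded above.
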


Let  $b\in C(\R, (0,\infty))$ be given and satisfy  that $0<\underline{b}\leq \overline{b}<\infty$. Consider
\begin{equation}
\label{b-eq1}
u_t=u_{xx}+b(t)u(1-u),\quad x\in\R.
\end{equation}
For given $u_0\in C_{\rm unif}^b(\R)$ with $u_0\ge 0$, let $u(t,x;u_0,b)$ be the solution of \eqref{b-eq1} with $u(0,x;u_0,b)=u_0(x)$.

For every  $0<\mu<\underline{\mu}^*:=\sqrt{\underline{b}}$ and $t\in\R$, let
\begin{equation}
\label{b-eq2}
c(t;b,\mu)=\frac{\mu^2+b(t)}{\mu},\quad
C(t;b,\mu)=\int_0^t c(\tau;b,\mu)d\tau,
\end{equation}
and
\begin{equation}
\label{b-eq3}
\phi^{\mu}(t,x;b)=e^{-\mu (x-C(t;b,\mu))}.
\end{equation}
Then the function $ \phi^{\mu}$ satisfies
\begin{equation}\label{b-eq4}
\phi^{\mu}_t=\phi^{\mu}_{xx} +b(t)\phi^{\mu},\quad x\in\R.
\end{equation}

\begin{lem}
\label{prelim-lm3} (\cite[Lemma 2.3]{SaSh1})
Let
$$
\phi_+^\mu(t,x;b)=\min\{1,\phi^\mu(t,x;b)\}.
$$
Then
$$
u(t,x;\phi_+^\mu(0,\cdot;b),b)\le \phi_+^\mu(t,x;b)\quad \forall\,\, t>0,\,\, x\in\R.
$$
\end{lem}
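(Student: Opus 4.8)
The plan is to recognize $\phi^\mu_+=\min\{1,\phi^\mu\}$ as a barrier built from two supersolutions of \eqref{b-eq1} and then invoke the parabolic comparison principle. The key structural fact is that the exponential profile $\phi^\mu(t,x;b)$ is designed to solve the \emph{linearization} of \eqref{b-eq1} about $u=0$, which is exactly \eqref{b-eq4}. Since $\phi^\mu>0$, a one-line computation using \eqref{b-eq4} gives
\begin{equation}
\phi^\mu_t-\phi^\mu_{xx}-b(t)\phi^\mu(1-\phi^\mu)=\bigl[\phi^\mu_t-\phi^\mu_{xx}-b(t)\phi^\mu\bigr]+b(t)(\phi^\mu)^2=b(t)(\phi^\mu)^2\ge 0,
\end{equation}
so $\phi^\mu$ is a supersolution of \eqref{b-eq1}; likewise the constant function $1$ is a stationary solution, hence a supersolution.

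The strategy I would follow avoids any difficulty at the corner set $\{\phi^\mu=1\}$, where $\phi^\mu_+$ is merely Lipschitz, by never treating $\phi^\mu_+$ itself as a single supersolution. Instead I would compare the genuine solution $u(t,x):=u(t,x;\phi^\mu_+(0,\cdot;b),b)$ against each supersolution separately. Writing $u_0:=\phi^\mu_+(0,\cdot;b)=\min\{1,\phi^\mu(0,\cdot;b)\}$, we have $u_0\le \phi^\mu(0,\cdot;b)$ and $u_0\le 1$ pointwise. Comparison against the supersolution $\phi^\mu$ yields $u(t,x)\le \phi^\mu(t,x;b)$, comparison against the supersolution $1$ yields $u(t,x)\le 1$, and taking the minimum of the two conclusions gives $u(t,x)\le\min\{1,\phi^\mu(t,x;b)\}=\phi^\mu_+(t,x;b)$ for all $t>0$, which is precisely the claim.

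The step I expect to be the main obstacle is justifying the comparison $u\le\phi^\mu$ on the unbounded domain $\R$, since $\phi^\mu=e^{-\mu(x-C(t;b,\mu))}$ is itself unbounded as $x\to-\infty$ and $b(t)$ is not assumed globally bounded. I would handle it by setting $w:=u-\phi^\mu$ and using the supersolution property together with the identity $u(1-u)-\phi^\mu(1-\phi^\mu)=(1-u-\phi^\mu)w$ to obtain the linear inequality $w_t\le w_{xx}+b(t)(1-u-\phi^\mu)w$. The coefficient $b(t)(1-u-\phi^\mu)$ is bounded above by $b(t)$ (as $u,\phi^\mu\ge 0$), and $b$ is continuous hence bounded on each strip $[0,T]$; moreover $w=u-\phi^\mu<u\le 1$ is bounded above and $w(0,\cdot)\le 0$. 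The maximum principle for subsolutions of a parabolic inequality that are bounded above (the Tychonoff uniqueness class) then forces $w\le 0$ on $[0,T]\times\R$, and letting $T\to\infty$ covers all $t>0$. The bound $u\le 1$ is just the invariance of $[0,1]$ under \eqref{b-eq1}. The one point worth flagging is precisely the unboundedness of $\phi^\mu$: it is harmless because only the \emph{difference} $u-\phi^\mu$ must lie in the uniqueness class, and that difference is bounded above by $1$.
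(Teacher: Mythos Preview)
Your argument is correct and is the standard route: $\phi^\mu$ and the constant $1$ are each supersolutions of \eqref{b-eq1}, the initial datum $\phi^\mu_+(0,\cdot;b)$ lies below both, and two separate applications of the comparison principle give $u\le\min\{1,\phi^\mu\}=\phi^\mu_+$. Your treatment of the only delicate point---the unboundedness of $\phi^\mu$ as $x\to-\infty$---is the right one: passing to the difference $w=u-\phi^\mu$ gives a linear parabolic inequality whose zeroth-order coefficient $b(t)(1-u-\phi^\mu)$ is bounded above on each finite time interval, and $w$ is bounded above by~$1$; the one-sided Phragm\'en--Lindel\"of/Tychonoff argument (applied to $w^+$) then yields $w\le 0$.

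As for the comparison with the paper: there is none to make here. The present paper does not prove Lemma~\ref{prelim-lm3}; it simply quotes it from \cite[Lemma~2.3]{SaSh1} (Part~I of the series). Your write-up therefore supplies a self-contained proof where the paper defers to the companion article, and the method you use is exactly the one that underlies that cited lemma.
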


\begin{lem}\label{prelim-lm4} (\cite[Lemma 2.4]{SaSh1})
   For every $\mu$ with $0<\mu<\tilde{\mu}<\min\{2\mu, \underline{\mu}^*\}$,  there exist  $\{t_k\}_{k\in\Z}$  with $t_k<t_{k+1}$ and $\lim_{k\to\pm\infty}t_k=\pm\infty$,   $B_b\in W^{1,\infty}_{\rm loc}(\R)\cap L^{\infty}(\R)$ with $B_b(\cdot)\in C^1((t_k,t_{k+1}))$ for $k\in\Z$,  and a positive real number $d_b$ such that for every $d\geq d_{b}$ the function
  $$\phi^{\mu,d,B_b}(t,x):=e^{-\mu (x-C(t;b,\mu))}-de^{\big(\frac{\tilde{\mu}}{\mu}-1\big)B_b(t)-\tilde{\mu}(x-C(t;b,\mu))} $$
 satisfies
 $$
 \phi^{\mu,d,B_b}_t\le \phi^{\mu,d,B_b}_{xx}+b(t)\phi^{\mu,d,B_b}(1-\phi^{\mu,d,B_b})
$$
for  $t\in (t_k,t_{k+1})$, $x\ge C(t,b,\mu)+ \frac{\ln d}{\tilde \mu-\mu}+\frac{ B_b(t)}{\mu},\,\, k\in\Z$.
 \end{lem}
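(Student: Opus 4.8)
The plan is to verify, by a direct computation in the moving frame, that $\phi:=\phi^{\mu,d,B_b}$ is a subsolution of \eqref{b-eq1} on the stated region once the corrector $B_b$ and the threshold $d_b$ are chosen appropriately. Write $\xi=x-C(t;b,\mu)$ and split $\phi=\phi_1-\phi_2$, where $\phi_1=e^{-\mu\xi}$ and $\phi_2=d\,e^{(\frac{\tilde\mu}{\mu}-1)B_b(t)}e^{-\tilde\mu\xi}$. A short computation shows that the stated region $x\ge C(t;b,\mu)+\frac{\ln d}{\tilde\mu-\mu}+\frac{B_b(t)}{\mu}$ is exactly the set $\{\phi_2\le\phi_1\}$, so there $0\le\phi\le\phi_1$. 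Since $\phi_1$ solves the linearization \eqref{b-eq4} exactly, the subsolution inequality reduces to controlling a single linear term against the quadratic defect $b(t)\phi^2$.

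First I would carry out the computation. Using $c(t;b,\mu)=\frac{\mu^2+b(t)}{\mu}$ one finds
\begin{equation*}
\partial_t\phi_2-\partial_{xx}\phi_2-b(t)\phi_2=(\tilde\mu-\mu)\,Q(t)\,\phi_2,\qquad Q(t):=\frac{b(t)+B_b'(t)}{\mu}-\tilde\mu,
\end{equation*}
so that $\partial_t\phi-\partial_{xx}\phi-b(t)\phi=-(\tilde\mu-\mu)Q(t)\phi_2$, and hence
\begin{equation*}
\phi_t-\phi_{xx}-b(t)\phi(1-\phi)=b(t)\phi^2-(\tilde\mu-\mu)Q(t)\phi_2 .
\end{equation*}
Thus the goal becomes the pointwise inequality $b(t)\phi^2\le(\tilde\mu-\mu)Q(t)\phi_2$ on $\{\phi_2\le\phi_1\}$.

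Next I would fix the corrector. Since $\mu\tilde\mu<\tilde\mu^2<(\underline{\mu}^*)^2=\underline b$, I can pick $\varepsilon>0$ with $\mu\tilde\mu<\underline b-\varepsilon$ and, by Lemma \ref{prelim-lm2}, a bounded $B$ with $\mathrm{essinf}_\tau(b(\tau)-B'(\tau))\ge\underline b-\varepsilon$; setting $B_b=-B$ gives $b+B_b'\ge\underline b-\varepsilon$ a.e., whence $Q(t)\ge\frac{\underline b-\varepsilon}{\mu}-\tilde\mu=:\delta_0>0$. On $\{\phi_2\le\phi_1\}$ the hypothesis $\tilde\mu<2\mu$ makes the quadratic term subordinate: writing $r=\phi_2/\phi_1\in(0,1]$ one gets $\frac{\phi^2}{\phi_2}=\phi_1\frac{(1-r)^2}{r}=r^{(2\mu-\tilde\mu)/(\tilde\mu-\mu)}(1-r)^2\,e^{-B_b(t)}\,d^{-\mu/(\tilde\mu-\mu)}\le C_0\,e^{-B_b(t)}\,d^{-\mu/(\tilde\mu-\mu)}$, with $C_0=C_0(\mu,\tilde\mu)$, because $\frac{2\mu-\tilde\mu}{\tilde\mu-\mu}>0$ keeps $r^{(2\mu-\tilde\mu)/(\tilde\mu-\mu)}(1-r)^2$ bounded on $(0,1]$. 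It then remains to show $b(t)\,C_0\,e^{-B_b(t)}d^{-\mu/(\tilde\mu-\mu)}\le(\tilde\mu-\mu)Q(t)$ for all $d$ large.

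The main obstacle is that $b$ is not assumed bounded, so the left-hand side cannot be made uniformly small merely by enlarging $d$: since $B_b\in L^\infty$, the factor $b(t)e^{-B_b(t)}$ is itself unbounded, and no choice of $B_b$ bounded can make $b\,e^{-B_b}$ bounded. The resolution is a cancellation: on the set where $b(t)$ is large, $Q(t)=\frac{b+B_b'}{\mu}-\tilde\mu$ is itself of order $b(t)$ \emph{provided} $B_b'$ is bounded below, so $b$ divides out of both sides and large $d$ suffices; on the complementary set $b$ is bounded, $Q\ge\delta_0$, and again large $d$ works. This forces $B_b$ to be chosen not only bounded with $b+B_b'\ge\underline b-\varepsilon$, but also with $B_b'$ bounded below, i.e.\ with its decrease rate capped. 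Producing such a $B_b$, necessarily only piecewise $C^1$ (hence the partition $\{t_k\}$, on whose cells the computation above is valid and across which $\phi$ remains continuous), while keeping it in $L^\infty$, is the delicate point, and is where the least-mean characterization of Lemma \ref{prelim-lm2} together with the structure $0<\underline b\le\overline b<\infty$ is used; the uniform threshold $d_b$ is then read off from $\|B_b\|_\infty$, the lower bound for $B_b'$, $\delta_0$, and $C_0$.
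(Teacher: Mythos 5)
Your frame computation is correct and is the right first move: with $\xi=x-C(t;b,\mu)$, $\phi_1=e^{-\mu\xi}$, $\phi_2=d\,e^{(\tilde\mu/\mu-1)B_b(t)-\tilde\mu\xi}$, the stated region is exactly $\{\phi_2\le\phi_1\}$, one has $\partial_t\phi_2-\partial_{xx}\phi_2-b(t)\phi_2=(\tilde\mu-\mu)Q(t)\phi_2$ with $Q=\frac{b+B_b'}{\mu}-\tilde\mu$, and since $\phi_1$ solves \eqref{b-eq4} the subsolution inequality reduces to $b\phi^2\le(\tilde\mu-\mu)Q\phi_2$, with $\phi^2/\phi_2\le C_0e^{-B_b(t)}d^{-\mu/(\tilde\mu-\mu)}$ on that region; you also correctly diagnose that unboundedness of $b$ is the crux. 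The genuine gap is in your resolution: you never construct the corrector you require, and in fact it need not exist. You ask for $B_b\in L^\infty$, piecewise $C^1$, satisfying \emph{both} $b+B_b'\ge\underline b-\varepsilon$ and $B_b'\ge-M$. Together these force $B_b'\ge(\underline b-\varepsilon)-\min\{b(t),K\}$ with $K=M+\underline b-\varepsilon$, and integrating over long windows, boundedness of $B_b$ forces the truncation $\min\{b,K\}$ to have least mean at least $\underline b-\varepsilon$. This can fail for \emph{every} $K$: take $b$ equal to a small constant $\eta>0$ except for one continuous spike of mass $m$ and height $n$ (width $\sim m/n$) in each interval $[n,n+1]$. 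Then $b$ satisfies the standing hypotheses with $\underline b=\overline b=\eta+m$, but the truncated spikes carry mass $\le Km/n\to 0$, so $\min\{b,K\}$ has least mean $\eta$ for all $K$. Whenever $\eta\le\mu\tilde\mu$ (allowed, since only $\mu\tilde\mu<\underline b$ is imposed), every admissible $\varepsilon$ has $\underline b-\varepsilon>\mu\tilde\mu\ge\eta$, so no such $B_b$ exists and your two-case argument cannot be closed.

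What is actually needed is not $B_b'$ bounded below but $Q(t)\gtrsim b(t)$, and this is achieved by building the corrector for $(1-\varepsilon)b$ instead of for $b$. Since $\mu\tilde\mu<\tilde\mu^2<\underline b$, fix $\varepsilon\in(0,1)$ with $(1-\varepsilon)\underline b>\mu\tilde\mu$, and apply Lemma \ref{prelim-lm2} to $(1-\varepsilon)b$, whose least mean is $(1-\varepsilon)\underline b$, to obtain $B\in W^{1,\infty}_{\rm loc}(\R)\cap L^\infty(\R)$ with ${\rm essinf}_\tau\big((1-\varepsilon)b(\tau)-B'(\tau)\big)\ge\mu\tilde\mu$; set $B_b=-B$ (the usual construction of such correctors, by matching $b$ to its averages over consecutive windows, is piecewise smooth, which is where the partition $\{t_k\}$ comes from). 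Then for a.e. $t$,
\begin{equation*}
Q(t)=\frac{(1-\varepsilon)b(t)+B_b'(t)-\mu\tilde\mu}{\mu}+\frac{\varepsilon\, b(t)}{\mu}\;\ge\;\frac{\varepsilon\, b(t)}{\mu},
\end{equation*}
so the good term scales with $b(t)$ automatically, with no case analysis and no lower bound on $B_b'$. Your bound on $\phi^2/\phi_2$ then closes the proof as soon as $C_0e^{\|B_b\|_\infty}d^{-\mu/(\tilde\mu-\mu)}\le\frac{(\tilde\mu-\mu)\varepsilon}{\mu}$, which is what defines $d_b$. (The paper itself does not reprove this lemma but quotes it from \cite{SaSh1}; the $(1-\varepsilon)$-absorption above is the step your argument is missing, and it is precisely where the hypothesis $\tilde\mu<\underline{\mu}^*$, i.e. $\mu\tilde\mu<\underline b$, is used.)
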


\begin{lem}\label{prelim-lm6}(\cite[Lemma 5.1]{SaSh1})
For every $0<\mu<\underline{\mu}^*$, the following hold.
$$\lim_{x\to -\infty} u(t,x+C(t,b(\cdot+t_0),\mu); \phi_+^\mu(0,\cdot;b(\cdot+t_0)),b(\cdot+t_0))=1 \text{ uniformly in}\ t>0 \ \text{ and }\ t_0\in\R,$$ and
$$\lim_{x\to \infty} u(t,x+C(t,b(\cdot+t_0),\mu);\phi_+^\mu(0,\cdot;b(\cdot+t_0)),b(\cdot+t_0))=0\ \text{ uniformly in}\ t>0\ \text{ and}\ t_0\in\R.$$
\end{lem}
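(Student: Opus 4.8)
The plan is to treat the two tails separately and to exploit that, because $C(0;b(\cdot+t_0),\mu)=\int_0^0=0$, the initial datum
$\phi_+^\mu(0,\cdot;b(\cdot+t_0))=\min\{1,e^{-\mu\,\cdot}\}=:u_0$ is the \emph{same} for every $t_0$, is nonincreasing, and joins $1$ at $-\infty$ to $0$ at $+\infty$. Writing $b_{t_0}(\cdot):=b(\cdot+t_0)$ and
$v_{t_0}(t,\xi):=u\big(t,\xi+C(t;b_{t_0},\mu);u_0,b_{t_0}\big)$ for the solution in the moving frame $\xi=x-C(t;b_{t_0},\mu)$, the two claims read $v_{t_0}(t,\xi)\to0$ as $\xi\to+\infty$ and $v_{t_0}(t,\xi)\to1$ as $\xi\to-\infty$, uniformly in $t>0,\ t_0\in\R$. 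All constants I need will be uniform in $t_0$: the least/greatest means $\underline b,\overline b$ are translation invariant, whence $\underline\mu^*=\sqrt{\underline b}$ is unchanged, and one may take the corrector of Lemma~\ref{prelim-lm2} as $B_{b_{t_0}}:=B_b(\cdot+t_0)$, so that $\mathrm{essinf}(b_{t_0}-B_{b_{t_0}}')=\mathrm{essinf}(b-B_b')$, $\|B_{b_{t_0}}\|_\infty=\|B_b\|_\infty$, and the threshold $d_{b_{t_0}}=d_b$ are all independent of $t_0$.

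The limit at $+\infty$ is immediate from Lemma~\ref{prelim-lm3} applied to $b_{t_0}$: evaluating $u\le\phi_+^\mu(\cdot,\cdot;b_{t_0})$ at $x=\xi+C(t;b_{t_0},\mu)$ gives $0\le v_{t_0}(t,\xi)\le\min\{1,e^{-\mu\xi}\}$, and $e^{-\mu\xi}\to0$ as $\xi\to+\infty$ uniformly in $t,t_0$, with nonnegativity of $u$ closing the squeeze. This also records $v_{t_0}\le1$.

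For the limit at $-\infty$ I would first note monotonicity: since $u_0$ is nonincreasing and the coefficient depends on $t$ only, comparison of $u$ with its spatial translates shows $\xi\mapsto v_{t_0}(t,\xi)$ is nonincreasing for every $t,t_0$. Consequently $\lim_{\xi\to-\infty}v_{t_0}(t,\xi)$ exists, and it suffices to produce for each $\delta>0$ a \emph{single fixed} abscissa $\xi_\delta$ with $v_{t_0}(t,\xi_\delta)\ge1-\delta$ for all $t,t_0$, because monotonicity then forces $v_{t_0}(t,\xi)\ge1-\delta$ for every $\xi\le\xi_\delta$. I obtain this pointwise bound in two steps. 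First, Lemma~\ref{prelim-lm4} supplies the subsolution $\phi^{\mu,d,B_{b_{t_0}}}$; one checks $\phi^{\mu,d,B_{b_{t_0}}}(0,\cdot)\le u_0$ (it is $\le e^{-\mu\xi}$ where positive, and nonpositive on $\xi\le\xi_0$) and that it vanishes on its moving left boundary, so the comparison principle gives, at a fixed $\xi_1$ to the left of the uniformly bounded boundary curve, a threshold $v_{t_0}(t,\xi_1)\ge\kappa>0$ with $\kappa$ independent of $t,t_0$ (here $\|B_{b_{t_0}}\|_\infty=\|B_b\|_\infty$ is used). Second, I would boost $\kappa$ up to $1-\delta$ by a spreading comparison on a large fixed frame window $[\xi_\delta,\xi_1]$, comparing $v_{t_0}$ from below with $p(t)\psi_R(\xi)$, where $p$ solves the logistic ODE $p'=b_{t_0}(t)\,p(1-p)$ with $p=\kappa$ at the initial time and $\psi_R$ is a principal eigenfunction; since $\underline b>0$ the integrated growth $\int_s^t b_{t_0}\ge(\underline b-\eps)(t-s)$ is uniform once $t-s$ is large, so $p$ reaches $1-\delta$ after a fixed integrated time while the $O(R^{-2})$ eigenvalue penalty is absorbed. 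Monotonicity then finishes the argument.

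The hard part will be the second (boosting) step. Because $b(t)$ is not assumed bounded, the frame advection $c(t;b_{t_0},\mu)=\mu+b_{t_0}(t)/\mu$ is unbounded, and a naive eigenfunction subsolution does not survive the term $c(t)\,\partial_\xi v$. This is exactly where the $L^\infty$ corrector of Lemma~\ref{prelim-lm2} earns its keep: only the \emph{averaged} lower bound $\int_s^t b_{t_0}\ge(\underline b-\eps)(t-s)$ enters the ODE comparison, valid uniformly in the location of the interval and hence in $t_0$, while the unbounded pointwise fluctuations of $b$ are carried by $B_{b_{t_0}}$ and never appear in a pointwise estimate. Uniformity in $t_0$ then propagates through the whole scheme from the translation invariance of $\underline b,\overline b$ and the fact that the constants of Lemmas~\ref{prelim-lm2}--\ref{prelim-lm4} depend on $b$ only through $\underline b,\overline b,\mu,\tilde\mu$.
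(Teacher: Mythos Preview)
The paper does not prove this lemma here; it is quoted verbatim from \cite[Lemma~5.1]{SaSh1}, so there is no in-paper argument to compare against. What I can do is flag a genuine gap in your boosting step.

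Your treatment of the $\xi\to+\infty$ tail via Lemma~\ref{prelim-lm3}, the monotonicity in $\xi$, and the uniform lower bound $v_{t_0}(t,\xi_1)\ge\kappa>0$ from Lemma~\ref{prelim-lm4} are all fine and match the machinery the paper sets up. The problem is the passage from $\kappa$ to $1-\delta$. A subsolution of the form $p(t)\psi_R(\xi)$ in the \emph{moving} frame has to satisfy
\[
p'\psi_R \le p\psi_R'' + c(t)\,p\psi_R' + b_{t_0}(t)\,p\psi_R(1-p\psi_R),
\]
and the term $c(t)\,p\psi_R'$ with $c(t)=\mu+b_{t_0}(t)/\mu$ is pointwise unbounded and changes sign with $\psi_R'$. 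Your appeal to the corrector $B_b$ of Lemma~\ref{prelim-lm2} does not cure this: that lemma yields only the \emph{one-sided} bound $b-B_b'\ge\underline b-\eps$ a.e., with no companion upper bound, so after any change of frame the effective drift is still unbounded from above on the half of the window where $\psi_R'<0$. In other words, the ``unbounded pointwise fluctuations of $b$'' are \emph{not} absorbed by $B_b$ in a subsolution inequality that contains $\psi_R'$; only the integrated growth of $p$ is tamed, which is not where the obstruction lies.

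What the companion paper actually uses to close this step is precisely the content of Theorem~\ref{uniform-tail-tm} (stated here for entire solutions): once one knows $v$ is monotone in $\xi$, $0<v<1$, and $\inf_t v(t,x^*)>0$, the uniform limit $v(t,\xi)\to1$ as $\xi\to-\infty$ follows. The hypothesis $0<\underline c\le\overline c<\infty$ there is essential, and it is the \emph{finiteness of $\overline b$} (not just $\underline b>0$) that prevents the frame from racing ahead over short times by an uncontrolled amount. Your scheme never invokes $\overline b<\infty$ in the boosting step, and without it one cannot get a $\xi_\delta$ that is uniform in $t>0$ and $t_0\in\R$. A correct route is either to cite/adapt the argument behind Theorem~\ref{uniform-tail-tm} to the half-line-in-time setting, or to run the eigenfunction comparison in the \emph{original} frame (no drift) from a time $s=t-T$ and then use $\overline b<\infty$ to bound $C(t)-C(t-T)$ uniformly once $T$ is large enough; either way the upper mean of $b$ must enter.
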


\begin{lem}\label{prelim-lm5} (\cite[Lemma 2.9]{SaSh1})
Let $f : \R\times\Omega \to (0,1) $ be a measurable function such that for every  $\omega\in\Omega$ the function $f^{\omega}:=f(\cdot,\omega) : \R \to (0,1)$ is  continuously differentiable and  strictly decreasing. Assume that $\lim_{x\to-\infty}f^{\omega}(x)=1$ and $\lim_{x\to\infty}f^{\omega}(x)=0$ for every $\omega\in\Omega$.  Then for every $a\in (0,1)$ the function $\Omega\ni\omega\mapsto f^{\omega,-1}(a)\in\R$ is measurable, where $f^{\omega,-1}$ denotes the inverse function of $f^{\omega}$.
\end{lem}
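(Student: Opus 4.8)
The plan is to show that $g(\omega):=f^{\omega,-1}(a)$ is measurable by reducing the measurability of its superlevel sets to the measurability of a single section of $f$, exploiting strict monotonicity. First I would observe that $g$ is well defined: for each fixed $\omega$, the map $f^\omega$ is continuous and strictly decreasing with $f^\omega(-\infty)=1>a$ and $f^\omega(+\infty)=0<a$, so by the intermediate value theorem together with strict monotonicity there is a unique $x\in\R$ with $f^\omega(x)=a$, namely $x=g(\omega)$. In particular $f^\omega:\R\to(0,1)$ is a bijection and the inverse is genuinely a function.

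Next, to prove that $g:\Omega\to\R$ is measurable it suffices to verify $\{\omega\in\Omega:g(\omega)>c\}\in\mathcal{F}$ for every $c\in\R$ (indeed for every $c\in\Q$). The crux is the elementary equivalence, valid precisely because each $f^\omega$ is strictly decreasing: for any $c\in\R$,
$$
g(\omega)>c\iff f(c,\omega)>f(g(\omega),\omega)=a\iff f(c,\omega)>a.
$$
If $g(\omega)>c$ then monotonicity gives $f(c,\omega)>f(g(\omega),\omega)=a$, and conversely $f(c,\omega)>a=f(g(\omega),\omega)$ forces $c<g(\omega)$. Hence $\{\omega:g(\omega)>c\}=\{\omega:f(c,\omega)>a\}$.

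Finally, for each fixed $c$ the section $\omega\mapsto f(c,\omega)$ is $\mathcal{F}$-measurable: since $f$ is jointly measurable on $\R\times\Omega$, it factors through the measurable map $\omega\mapsto(c,\omega)$, and composition of measurable maps is measurable. Therefore $\{\omega:f(c,\omega)>a\}\in\mathcal{F}$ for every $c$, and by the identity above $\{\omega:g(\omega)>c\}\in\mathcal{F}$ for all $c$, which yields the measurability of $g$.

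I do not expect a genuine obstacle; the argument is a clean reduction rather than a hard computation. The one point demanding care is the measurability of the section $\omega\mapsto f(c,\omega)$, which rests on reading the hypothesis ``$f$ is measurable'' as joint measurability with respect to the product $\sigma$-algebra $\mathcal{B}(\R)\otimes\mathcal{F}$ and invoking the standard fact that sections of product-measurable functions are measurable. This is the natural interpretation given how $f$ arises in the applications, so the monotonicity-to-section reduction should go through verbatim.
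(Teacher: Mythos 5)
Your proof is correct. Note that this paper does not actually prove the lemma here --- it is quoted from Part I of the series (\cite[Lemma 2.9]{SaSh1}) --- but your argument is the natural one for this statement: the well-definedness of $f^{\omega,-1}(a)$ follows from continuity, strict monotonicity and the limits at $\pm\infty$, and the key identity $\{\omega : f^{\omega,-1}(a)>c\}=\{\omega : f(c,\omega)>a\}$, combined with the measurability of the sections $\omega\mapsto f(c,\omega)$ of the jointly measurable $f$, gives measurability of all superlevel sets, which is exactly the standard route to this result.
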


 Next, we recall some results about the stability of the positive constant equilibrium solution $u\equiv 1$ of \eqref{Main-eq} (resp.  \eqref{nonautonomous-eq}) established.
Observe that $u(t,x)=v(t,x-C(t;\omega))$ with $C(t;\omega)$ being differential in $t$  solves \eqref{Main-eq} if and only if $v(t,x)$ satisfies
\begin{equation}
\label{moving coordinate -eq1}
v_t=v_{xx}+c(t;\omega)v_x+a(\theta_t\omega)v(1-v),
\end{equation}
where $c(t;\omega)=C'(t;\omega)$.
The following theorems are proved in the first part.

\begin{tm}\label{stability of const equi solu thm} (\cite[Theorem 1.1]{SaSh1})
 Assume (H1) holds.
 \begin{itemize}
 \item[(1)]  For every $0<\tilde{a}<\underline{a}$,  $u_0\in C^{b}_{\rm uinf}(\R)$ with $\inf_{x}u_0(x)>0$ and for almost all $\omega\in\Omega$, there is positive constant $M>0$  such that
$$
\|u(t,\cdot;u_0,\theta_{t_0}\omega)-1\|_{\infty}\leq Me^{-\tilde{a}t},\quad \forall \ t\geq 0, \ t_0\in\R.
$$

\item[(2)]  Let $v(t,x;u_0,\omega)$ be the solution of \eqref{moving coordinate -eq1} with $v(0,x;u_0,\omega)=u_0(x)$. The result in (1)  also holds for $v(t,x;u_0,\omega)$.
\end{itemize}
 \end{tm}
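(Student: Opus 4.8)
The plan is to reduce this PDE stability statement to an explicitly solvable logistic ODE via the comparison principle, and then to read off the exponential rate from the least‑mean hypothesis \textbf{(H1)}, the delicate point being the uniformity in $t_0$.

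First I would exploit that the coefficient $a(\theta_t\omega)$ in \eqref{Main-eq} is independent of $x$. Writing $\delta:=\inf_x u_0(x)>0$ and $M_0:=\sup_x u_0(x)<\infty$, I would let $\underline y(t)$ and $\overline y(t)$ solve the spatially homogeneous problem $y'=a(\theta_{\tau+t_0}\omega)\,y(1-y)$ with $\underline y(0)=\delta$ and $\overline y(0)=\max\{M_0,1\}$. Since a constant‑in‑$x$ function has vanishing second spatial derivative, $\underline y$ and $\overline y$ are respectively a sub‑ and a supersolution of \eqref{Main-eq} in the environment $\theta_{t_0}\omega$, lying below and above $u_0$; the parabolic comparison principle then gives $\underline y(t)\le u(t,x;u_0,\theta_{t_0}\omega)\le \overline y(t)$ for all $x$ and $t\ge0$. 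Because $1$ is an equilibrium and the logistic flow is monotone, $\underline y(t)\le 1\le\overline y(t)$, so $|u(t,x;u_0,\theta_{t_0}\omega)-1|\le\max\{\,1-\underline y(t),\ \overline y(t)-1\,\}$.

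Next I would solve the ODE in closed form: the substitution $w=1/y$ linearizes it to $w'+a(\theta_{\tau+t_0}\omega)w=a(\theta_{\tau+t_0}\omega)$, so that, writing $A_{t_0}(t):=\int_{t_0}^{t_0+t}a(\theta_s\omega)\,ds$, one obtains $y(t)=y(0)\big/\big(y(0)+e^{-A_{t_0}(t)}(1-y(0))\big)$ and hence $|y(t)-1|\le C\,e^{-A_{t_0}(t)}$ with $C=\max\{(1-\delta)/\delta,\ M_0-1\}$ depending only on $\delta$ and $M_0$, not on $t_0$. The crux is then to bound $A_{t_0}(t)$ from below uniformly in $t_0$, and here I would use \textbf{(H1)} in the sharp form given by the liminf in \eqref{a-least-mean}: for $\omega\in\Omega_0$ and any $\tilde a<\underline a$ there is $r=r(\omega,\tilde a)>0$ with $\frac1{t-s}\int_s^t a(\theta_\sigma\omega)\,d\sigma\ge\tilde a$ for every interval with $t-s\ge r$. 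Applying this to $[t_0,t_0+t]$ yields $A_{t_0}(t)\ge\tilde a\,t$ for all $t\ge r$ and all $t_0\in\R$ simultaneously, giving $|u-1|\le C e^{-\tilde a t}$ there; on $0\le t<r$ the comparison bounds $\delta\le u\le\max\{M_0,1\}$ are again uniform in $t_0$, so $|u-1|$ is controlled by $e^{-\tilde a t}$ after enlarging the constant, producing (1) with $M=M(\omega,u_0,\tilde a)$. For part (2) the moving‑coordinate equation \eqref{moving coordinate -eq1} differs from \eqref{Main-eq} only by the advection term $c(t;\omega)v_x$, which vanishes on spatially homogeneous functions; thus $\underline y,\overline y$ remain sub/supersolutions and the identical argument applies.

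The main obstacle is precisely the uniformity in $t_0$. The naive Birkhoff average $\frac1t\int_0^t a\to\hat a$ would control only a single unshifted environment with an $\omega$‑ and $t_0$‑dependent threshold, whereas the interval‑uniform lower bound $\int_s^t a\ge\tilde a(t-s)$ valid for \emph{all} long intervals — which is exactly the content of the least‑mean quantity $\underline a$ in \textbf{(H1)} via \eqref{a-least-mean} — is what delivers a single threshold $r$ good for every $t_0$, and at the same time explains why the admissible rate is capped by $\underline a$ rather than the larger $\hat a$.
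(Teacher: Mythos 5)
Your proof is correct: the reduction to the spatially homogeneous logistic ODE via comparison, the explicit formula giving $|y(t)-1|\le C e^{-\int_{t_0}^{t_0+t}a(\theta_s\omega)\,ds}$, and the use of the interval-uniform lower bound built into the definition of $\underline{a}$ in \eqref{a-least-mean} (rather than the Birkhoff average) to get uniformity in $t_0$ are all sound, and the observation that the advection term in \eqref{moving coordinate -eq1} vanishes on spatially homogeneous functions correctly handles part (2). Note that this paper does not prove the theorem itself but quotes it from Part I of the series (\cite[Theorem 1.1]{SaSh1}); your argument is essentially the one used there, so there is no substantive divergence to report.
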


\begin{tm}
\label{uniform-tail-tm} (\cite[Theorem 3.1]{SaSh1})
Assume {\bf (H1)}.
Suppose that $v(t,x;\omega)$ is an entire solution of \eqref{moving coordinate -eq1},  $v(t,x;\omega)$ is nonincreaing in $x$, $0<v(t,x;\omega)<1$. For given $\omega\in\Omega$ with $0<\underline{c}(\cdot;\omega)\le\overline{c}(\cdot;\omega)<\infty$, if  there is $x^*\in\R$ such that $\inf_{t\in\R} v(t,x^*;\omega)>0$, then $\lim_{x\to -\infty} v(t,x;\omega)=1$ uniformly in $t\in\R$.
\end{tm}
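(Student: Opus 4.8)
The plan is to reduce the uniform statement to a pointwise one, establish the pointwise limit by an ODE dichotomy, and then bootstrap to uniformity using a sub-solution whose phase is \emph{bounded}. Throughout write $g(t)=a(\theta_t\omega)$ and $c(t)=c(t;\omega)$, which are only continuous, hence locally but not globally bounded. Since $v(t,\cdot;\omega)$ is nonincreasing and valued in $(0,1)$, the limit $p(t):=\lim_{x\to-\infty}v(t,x;\omega)$ exists for every $t$, and $\de_0:=\inf_{t}v(t,x^*;\omega)>0$ gives, by monotonicity, $v(t,x;\omega)\ge\de_0$ for all $x\le x^*$ and all $t\in\R$; in particular $p(t)\ge\de_0$. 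Because $v$ is monotone in $x$, the asserted uniform convergence is equivalent to $\lim_{X\to-\infty}\inf_{t\in\R}v(t,X;\omega)=1$, which is what I would establish.

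First I would identify $p$. Set $v_n(t,x):=v(t,x-n;\omega)$; since $c,g$ are independent of $x$, each $v_n$ solves the same equation \eqref{moving coordinate -eq1}, and on any compact time interval $c,g$ are bounded, so interior parabolic estimates give $C^{1,2}_{\rm loc}$ bounds uniform in $n$ while $v_n(t,x)\uparrow p(t)$. Passing to the limit, $p$ is a bounded entire solution of $p'=g(t)p(1-p)$ with $p\ge\de_0>0$. By separation of variables $\ln\frac{p(t)}{1-p(t)}=\ln\frac{p(t_1)}{1-p(t_1)}+\int_{t_1}^t g$; since $\underline a>0$ forces $\int_{t_1}^t g\to-\infty$ as $t\to-\infty$, any value $p(t_1)<1$ would give $p(t)\to0$, contradicting $p\ge\de_0$. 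Hence $p\equiv1$, i.e. $v(t,-\infty;\omega)=1$ for every $t$. This step uses only $\underline a>0$ and local-in-time regularity, so the unboundedness of $c,g$ is harmless here.

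To upgrade to uniformity I would build a sub-solution of \eqref{moving coordinate -eq1} of the form $\underline v(t,x)=1-Ae^{\nu(x-\Gamma(t))}$, valid where $Ae^{\nu(x-\Gamma(t))}\le\kappa$ with $\kappa:=1-\de_0$. A direct computation shows $\underline v$ is a sub-solution there provided $\Gamma'(t)\le-\nu-c(t)+\frac{1-\kappa}{\nu}g(t)$ a.e. The crucial point is to take $\Gamma$ bounded: by Lemma~\ref{prelim-lm2} applied to $g$ (least mean $\underline a$) there is a bounded $B_1$ with $g-B_1'\ge\underline a-\eps$, and by the analogous largest-mean characterization applied to $c$ (largest mean $\overline c$) a bounded $B_2$ with $c+B_2'\le\overline c+\eps$; then $\Gamma:=\frac{1-\kappa}{\nu}B_1-B_2$ is bounded and satisfies the required inequality as soon as $\frac{1-\kappa}{\nu}\underline a-\overline c-\nu>0$, which holds for all small $\nu>0$ since $\underline a>0$ and $\overline c<\infty$. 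With $X_\kappa(t)=\Gamma(t)+\frac1\nu\ln(\kappa/A)$ the right edge of the region, boundedness of $\Gamma$ lets me choose $A$ large so that $\sup_t X_\kappa(t)\le x^*$.

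It then remains to compare $\underline v$ with $v$ on $\{x\le X_\kappa(t)\}$: on the lateral boundary $\underline v=\de_0\le v(t,X_\kappa(t);\omega)$ since $X_\kappa(t)\le x^*$, and as $x\to-\infty$ one has $\underline v\to1=p(t)$, so nothing escapes at $-\infty$. Granting $\underline v\le v$, boundedness of $\Gamma$ yields for $x\le X$ the bound $v(t,x;\omega)\ge1-Ae^{\nu X}e^{-\nu\inf_t\Gamma(t)}$, whose right side tends to $1$ as $X\to-\infty$ independently of $t$, which is exactly the claim. The genuine difficulty I expect is this time-global comparison: the domain is unbounded in space and time and the drift and linearized zeroth-order coefficients are not globally bounded. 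I would localize to $[\bar t-T,\bar t]\times\{x>-R\}$, where $c,g$ are bounded; the left boundary contribution vanishes as $R\to\infty$ (uniformly on the compact interval, by the pointwise step and Dini), and the initial-slice contribution is damped as $T\to\infty$ because $\int g=\infty$, the exponential stability of $u\equiv1$ in Theorem~\ref{stability of const equi solu thm} supplying this damping in clean form; some care is needed since the linearized reaction $-g(1-\underline v-v)$ is stabilizing only once the lower bound exceeds $\tfrac12$, so a preliminary bootstrap of $\de_0$ is required. Verifying this comparison under the mere least-mean control on $c$ and $g$ is the technical heart of the proof.
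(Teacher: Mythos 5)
This theorem is not proved in the present paper at all: it is recalled verbatim from Part I (\cite[Theorem 3.1]{SaSh1}), so there is no in-paper proof to compare your route against; I can only assess your argument on its own terms. Your Step on the pointwise limit is correct: the shifted solutions, interior estimates on compact time intervals, and the backward dichotomy for $p'=a(\theta_t\omega)p(1-p)$ (using that the least mean of $a(\theta_\cdot\omega)$ is positive) do force $p\equiv 1$. Your bounded-phase computation is also correct: the sub-solution inequality $\Gamma'\le -\nu-c+\frac{1-\kappa}{\nu}g$, the use of Lemma \ref{prelim-lm2} for $g$, and the largest-mean analogue for $c$ (not stated in the paper, but it follows from the same window-average construction of $B$, using that both means of $c$ are finite) all check out, up to a harmless sign slip in your definition of $\Gamma$.

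The genuine gap is exactly where you locate it, Step 4, and the repair you sketch would fail. Since $v$ is an entire solution there is no admissible initial time at which $\underline v\le v$ is known: on any slice $t=\bar t-T$ you only know $v\ge\delta_0$ on $\{x\le x^*\}$, while $\underline v$ there takes values up to $1$, so the initial-slice discrepancy is of order one. Neither of your damping mechanisms handles this: (i) Theorem \ref{stability of const equi solu thm} requires $\inf_{x\in\R}u_0(x)>0$ over the whole line and cannot be applied to the front-like $v$, and restricting $v$ to a half-line does not produce a solution of the equation; (ii) the zeroth-order coefficient in the equation for $\underline v-v$ is $g(t)\bigl(1-\underline v-v\bigr)$, which is \emph{positive} wherever $\underline v+v<1$, i.e.\ whenever $\delta_0<1/2$, so the initial error is amplified like $e^{(1-2\delta_0)\int g}$ as $T\to\infty$ rather than damped; and the ``preliminary bootstrap of $\delta_0$ above $1/2$'' you would need is precisely (a uniform-in-$t$ version of) the statement being proved, so the argument is circular. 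Dini only gives uniformity of $v(t,x)\to 1$ on compact $t$-intervals, which is useless for a bound uniform in the starting time $s$. Two ways to actually close the gap: (a) make the plateau of the sub-solution grow in time, i.e.\ for each starting time $s$ use $\underline v_s(t,x)=q_s(t)-Ae^{\nu(x-\Gamma_s(t))}$, $t\ge s$, where $\dot q_s=g\,q_s(1-q_s)$, $q_s(s)=\delta_0$; then the initial ordering $\underline v_s(s,\cdot)\le\delta_0\le v(s,\cdot)$ is trivial, the transient period where the effective coefficient $2q_s-1-\kappa$ is negative costs only a decrease of $\Gamma_s$ that is bounded uniformly in $s$ (because the largest means of $c$ and $g$ are finite), and letting $s\to-\infty$ with $q_s(t)\to 1$ yields the uniform lower bound $v(t,x)\ge 1-A'e^{\nu x}$; or (b) bypass sub-solutions entirely by invoking the uniform-in-$s$ spreading result for front-like data, Theorem \ref{spreading-speeds-tm-0}(1): comparing $v(t-T,\cdot)$ from below with a translate of front-like data of height $\delta_0$ in the original coordinates, and using $C(t)-C(t-T)\le(\overline c+1)T+K$, gives $v(t,x)\ge 1-\varepsilon_T$ for all $t\in\R$ and all $x\le x^*-\bigl((\overline c+1)-c'\bigr)T-K$ with $\varepsilon_T\to0$, which is the conclusion directly.
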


\begin{tm}
\label{real-noise-tm1} (\cite[Theorem 3.2, Lemma 3.2, Corollary 3.1]{SaSh1})

Assume (H3). $Y(\omega)=\frac{1}{\int_{-\infty}^0 e^{ s+\int_0^s \xi(\theta_\tau\omega)d\tau}ds}$ is a random equilibrium of \eqref{real-noise-eq}.
Moreover, the following hold.
\begin{itemize}
\item[(1)] For each $\omega\in\Omega$, { $0<Y_{\inf}(\theta_\cdot\omega)\leq Y_{\sup}(\theta_\cdot\omega)<\infty$.}

\item[(2)] For each $\omega\in\Omega$, $\lim_{t\to\infty} \frac{\ln Y(\theta_t\omega)}{t}=0$ and   $\lim_{t\to\infty} \frac{\int_0^t Y(\theta_s\omega)ds}{t}=1$.

\item[(3)] $\underline{Y}(\omega)=1-\underline{\xi}>0$ and $\overline{Y}(\omega)-1+\overline{\xi}<\infty$ for a.e. $\omega\in\Omega$.

\item[(4)] For given  $u_0\in C^{b}_{\rm uinf}(\R)$ with $\inf_{x}u_0(x)>0$,  for a.e. $\omega\in\Omega$,
\begin{equation*}
\lim_{t\to\infty}\|\frac{u(t,\cdot;u_0,\theta_{t_0}\omega)}{Y(\theta_t\theta_{t_0}\omega)}-1\|_{\infty}=0
\end{equation*}
 uniformly in $t_0\in\R$ where $u(t,x;u_0,\theta_{t_0}\omega)$ is the solution of \eqref{real-noise-eq} with $u(0,x;u_0,\theta_{t_0}\omega)=u_0(x)$.
 \end{itemize}
 \end{tm}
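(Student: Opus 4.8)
The plan is to treat $y(t):=Y(\theta_t\omega)$ as the bounded entire solution of the spatially homogeneous logistic ODE attached to \eqref{real-noise-eq}, and to read off every assertion either from the explicit formula \eqref{random-equilibrium-1} or from the elementary identity obtained by taking a logarithmic derivative; the stability claim (4) will be reduced to the already established Theorem \ref{stability of const equi solu thm} through the normalizing substitution $\tilde u = u/Y(\theta_t\omega)$. First I would verify that $Y(\theta_t\omega)$ is a random equilibrium. Writing $r(t)=1+\xi(\theta_t\omega)$ and substituting $z=1/y$, the logistic equation $y'=y(r-y)$ becomes the linear equation $z'+r(t)z=1$. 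After the changes of variable $\sigma=s+t$ and $\rho=\tau+t$, the denominator in \eqref{random-equilibrium-1} evaluated at $\theta_t\omega$ becomes $z(t)=\int_{-\infty}^t e^{-\int_s^t r(\rho)d\rho}\,ds$, which is exactly the bounded solution of $z'+rz=1$ (differentiate under the integral sign; the improper integral converges because $\underline\xi>-1$ forces $\int_s^t r\to+\infty$ as $s\to-\infty$). Hence $y=1/z$ solves $y'=y(1+\xi-y)$, and since $Y$ is $x$-independent, $u(t,x)\equiv Y(\theta_t\omega)$ is an entire spatially homogeneous solution of \eqref{real-noise-eq} (cf.\ \cite{Arn}).

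For item (1) I would use the formula directly. From the pointwise lower bound $\xi\ge\xi_{\inf}>-1$ supplied by {\bf (H3)} one gets $\int_s^t r\ge(1+\xi_{\inf})(t-s)$, hence $z(t)\le\int_{-\infty}^t e^{-(1+\xi_{\inf})(t-s)}ds=1/(1+\xi_{\inf})$; symmetrically the pointwise upper bound on $\xi$ along the orbit gives $z(t)\ge 1/(1+\xi_{\sup})$ with $\xi_{\sup}:=\sup_{\tau}\xi(\theta_\tau\omega)<\infty$. This yields the two-sided estimate $1+\xi_{\inf}\le Y(\theta_t\omega)\le 1+\xi_{\sup}$ for all $t$, i.e.\ $0<Y_{\inf}\le Y_{\sup}<\infty$, with the lower bound on $\xi$ being precisely what keeps $1+\xi_{\inf}>0$. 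The first limit in (2) is then immediate, since $\ln Y(\theta_t\omega)$ is bounded and therefore $\frac1t\ln Y(\theta_t\omega)\to0$.

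The remaining items (the second limit in (2) and item (3)) follow from the logarithmic-derivative identity obtained by dividing the ODE by $y$, namely
\[
Y(\theta_t\omega)=1+\xi(\theta_t\omega)-\frac{d}{dt}\ln Y(\theta_t\omega).
\]
Averaging this over $[s,t]$ and using that $\ln Y$ is bounded (so the boundary term $\frac{\ln Y(\theta_t\omega)-\ln Y(\theta_s\omega)}{t-s}\to0$ as $t-s\to\infty$) gives $\frac1{t-s}\int_s^t Y(\theta_\tau\omega)\,d\tau=1+\frac1{t-s}\int_s^t\xi(\theta_\tau\omega)\,d\tau+o(1)$; taking liminf and limsup yields $\underline Y=1+\underline\xi$ and $\overline Y=1+\overline\xi$, which are positive and finite by {\bf (H3)}, establishing (3). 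For the second limit in (2) I would average over $[0,t]$ instead and invoke the Birkhoff ergodic theorem together with $\int_\Omega\xi\,d\PP=0$ to obtain $\frac1t\int_0^t\xi(\theta_s\omega)\,ds\to0$ for a.e.\ $\omega$; combined with the vanishing boundary term this gives $\frac1t\int_0^t Y(\theta_s\omega)\,ds\to1$.

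Finally, item (4) I would deduce by the substitution $\tilde u=u/Y(\theta_t\omega)$: a direct computation using the equilibrium ODE for $Y$ turns \eqref{real-noise-eq} into \eqref{Main-eq} with coefficient $a(\theta_t\omega)=Y(\theta_t\omega)$. By (3) this coefficient has $\underline a=\underline Y=1+\underline\xi>0$ and $\overline a=\overline Y=1+\overline\xi<\infty$, so {\bf (H1)} holds for $a=Y$; moreover $\tilde u_0=u_0/Y(\theta_{t_0}\omega)$ still has positive infimum by (1). Applying Theorem \ref{stability of const equi solu thm} to the transformed equation gives $\|\tilde u(t,\cdot;\tilde u_0,\theta_{t_0}\omega)-1\|_\infty\le Me^{-\tilde a t}\to0$ uniformly in $t_0$, which after multiplying back by $Y(\theta_t\theta_{t_0}\omega)$ is exactly the assertion of (4). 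I expect the main obstacle to lie in two places: justifying that \eqref{random-equilibrium-1} is genuinely the bounded solution \emph{and} is two-sidedly bounded along orbits, which is where the pointwise control of $\xi$ in {\bf (H3)} ($\xi_{\inf}>-1$ and finite pointwise supremum) is indispensable; and verifying that the normalized coefficient $Y$ inherits {\bf (H1)}, since that is the precise bridge that lets the previously proved stability theorem deliver (4).
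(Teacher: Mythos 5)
This paper never proves Theorem \ref{real-noise-tm1}: it is imported verbatim from Part I of the series (\cite[Theorem 3.2, Lemma 3.2, Corollary 3.1]{SaSh1}), so your attempt has to be judged on its own merits. Your overall architecture is the natural one and matches how the series is set up: verify that $Y$ is a random equilibrium via $z=1/y$ and the linear equation $z'+rz=1$; get (2) and (3) from the identity $Y(\theta_t\omega)=1+\xi(\theta_t\omega)-\frac{d}{dt}\ln Y(\theta_t\omega)$, averaging, and Birkhoff's theorem; and get (4) by the normalization $\tilde u=u/Y(\theta_t\omega)$, which turns \eqref{real-noise-eq} into \eqref{Main-eq} with $a=Y$, so that (3) supplies {\bf (H1)} and Theorem \ref{stability of const equi solu thm} finishes. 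Those parts are sound (modulo the small point that the transformed datum $u_0/Y(\theta_{t_0}\omega)$ depends on $t_0$; this is handled by squeezing it between the constants $\inf u_0/Y_{\sup}$ and $\sup u_0/Y_{\inf}$, which are uniform in $t_0$ by (1), and applying comparison).

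The genuine gap is in your proof of item (1), which is the keystone for everything else. You invoke hypotheses that {\bf (H3)} does not contain: {\bf (H3)} controls only the \emph{means} of $\xi$ ($-1<\underline{\xi}$ and $\overline{\xi}<\infty$ are liminf/limsup of $\frac{1}{t-s}\int_s^t\xi$), while pointwise it assumes only $\xi_{\inf}(\theta_\cdot\omega)=\inf_t\xi(\theta_t\omega)>-\infty$. It does \emph{not} assume $\xi_{\inf}>-1$, and it assumes nothing at all about $\sup_t\xi(\theta_t\omega)$, which may be $+\infty$; dropping such pointwise two-sided bounds is precisely the novelty this series advertises. Hence your bound $\int_s^t r\ge(1+\xi_{\inf})(t-s)$ may have a negative rate, your ``symmetric'' bound via $\xi_{\sup}:=\sup_\tau\xi(\theta_\tau\omega)<\infty$ uses a quantity that can be infinite, and the asserted estimate $1+\xi_{\inf}\le Y(\theta_t\omega)\le 1+\xi_{\sup}$ is not a consequence of {\bf (H3)} (its lower bound may be nonpositive and its upper bound need not exist). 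The correct argument splits the integral $z(t)=\int_{-\infty}^t e^{-\int_\sigma^t(1+\xi)d\rho}\,d\sigma$ at scale $r_\epsilon$: choose $r_\epsilon$ so that $(\underline{\xi}-\epsilon)(t-s)\le\int_s^t\xi\le(\overline{\xi}+\epsilon)(t-s)$ whenever $t-s\ge r_\epsilon$, and use the finite pointwise infimum to control short intervals, namely $\int_s^t\xi\ge -r_\epsilon|\xi_{\inf}|$ and $\int_s^t\xi=\int_s^{s+r_\epsilon}\xi-\int_t^{s+r_\epsilon}\xi\le r_\epsilon(\overline{\xi}+\epsilon+|\xi_{\inf}|)$ for $t-s<r_\epsilon$. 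This yields constants $C_\epsilon$ with $(1+\underline{\xi}-\epsilon)(t-s)-C_\epsilon\le\int_s^t(1+\xi)\le(1+\overline{\xi}+\epsilon)(t-s)+C_\epsilon$ for \emph{all} $s\le t$, whence $z$ is bounded above and below by positive constants uniformly in $t$, proving (1); with (1) restored, the rest of your chain for (2), (3), (4) goes through.
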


Now, we recall some results on the spreading speeds of \eqref{Main-eq} and \eqref{nonautonomous-eq} from the first part.

Let  $\underline{c}^*$, $\hat c^*$, and $\bar c^*$ be as in \eqref{minimal-speeds-eq}. Let
 $$
 X_c^+=\{u\in C_{\rm unif}^b(\R)\,|\, u\ge 0,\,\,  {\rm supp}(u)\,\,\, \text{is bounded and not empty}\}.
 $$

 \begin{defin}
 \label{spreading-speed-def}
 For given $\omega\in\Omega$, let
 $$
 C_{\sup}(\omega)=\{c\in\R^+\,|\, \limsup_{t\to\infty}\sup_{s\in\R,|x|\ge ct}u(t,x;u_0,\theta_s\omega)=0\quad \forall\,\, u_0\in X_c^+\}
 $$
 and
 $$
 C_{\inf}(\omega)=\{c\in\R^+\,|\, \limsup_{t\to\infty}\sup_{s\in\R,|x|\le ct} |u(t,x;u_0,\theta_s\omega)-1|=0\quad \forall\,\, u_0\in X_c^+\}.
 $$
 Let
 $$
 c_{\sup}^*(\omega)=\inf\{c\,|\, c\in C_{\sup}(\omega)\},\quad c_{\inf}^*(\omega)=\sup\{c\,|\, c\in C_{\inf}(\omega)\}.
 $$
 $[c_{\inf}^*(\omega),c_{\sup}^*(\omega)]$ is called the {\rm spreading speed interval} of \eqref{Main-eq}  with respect to compactly supported initial functions.
 \end{defin}

\begin{tm}\label{spreading-speeds-tm} (\cite[Theorem 1.2]{SaSh1})
Assume that {\bf (H1)} holds. Then the following hold.

\begin{itemize}
\item[(i)] For any $\omega\in\Omega_0$, $c_{\sup}^*(\omega)=\bar{c}^*$.

\item[(ii)]  For any $\omega\in\Omega_0$,  $c_{\inf}^{*}(\omega)=\underline{c}^*$.
\end{itemize}
\end{tm}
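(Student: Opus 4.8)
The plan is to deduce both equalities from four one–sided bounds,
\[
c_{\sup}^*(\omega)\le\bar c^*,\quad c_{\sup}^*(\omega)\ge\bar c^*,\quad c_{\inf}^*(\omega)\le\underline c^*,\quad c_{\inf}^*(\omega)\ge\underline c^*,
\]
each obtained from the comparison principle together with the interval–average information encoded in \eqref{a-least-mean}--\eqref{a-largest-mean}. Fix $\omega\in\Omega_0$ and set $b_s(\tau):=a(\theta_{\tau+s}\omega)$; by \eqref{a-a-eq1} every medium $b_s$ has the same least and largest mean $\underline a$ and $\bar a$. Two facts will be used throughout: for each $\epsilon>0$ there is $r_0=r_0(\epsilon)$ with $\underline a-\epsilon\le\frac1t\int_s^{s+t}a(\theta_\tau\omega)\,d\tau\le\bar a+\epsilon$ for all $s\in\R$ and $t\ge r_0$, and, since $\bar a$ and $\underline a$ are a $\limsup$ and a $\liminf$, there exist intervals of arbitrarily large length on which this average is within $\epsilon$ of $\bar a$, respectively of $\underline a$.

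The two upper bounds come from comparison from above. As $au(1-u)\le au$ for $u\ge0$, the solution is dominated by the solution $U$ of $U_t=U_{xx}+a(\theta_{t+s}\omega)U$ with the same datum, and for $\operatorname{supp}u_0\subset[-R,R]$ the heat–kernel representation gives $U(t,x)\le C(R,u_0)\,t^{-1/2}\exp\big(\int_s^{s+t}a(\theta_\tau\omega)\,d\tau-(|x|-R)^2/4t\big)$. For $c_{\sup}^*(\omega)\le\bar c^*$ I would fix $c>2\sqrt{\bar a}$ and $\epsilon$ with $c>2\sqrt{\bar a+\epsilon}$; on $|x|\ge ct$, $t\ge r_0$, the exponent is at most $t(\bar a+\epsilon)-c^2t/4+O(1)\to-\infty$ uniformly in $s$, so $u\to0$ on $|x|\ge ct$ uniformly in $s$ and $c\in C_{\sup}(\omega)$ for every such $c$. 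For $c_{\inf}^*(\omega)\le\underline c^*$ I would pick a $\liminf$–realizing sequence $[s_n,s_n+t_n]$, $t_n\to\infty$, with average at most $\underline a+\epsilon$; the same estimate makes $u$ small on $\{2\sqrt{\underline a+\epsilon}\,t_n\le x\le ct_n\}$, a nonempty range once $c>\underline c^*$, so $|u-1|$ stays near $1$ there and $c\notin C_{\inf}(\omega)$. (Equivalently one may use the capped exponentials $\phi_+^\mu$ of Lemma \ref{prelim-lm3}.)

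The two lower bounds are where the nonlinearity enters. On any region where $u\le\theta$ one has $au(1-u)\ge(1-\theta)au$, so there $u$ dominates the solution of the linear equation with rate $(1-\theta)a$; propagating the resulting $\theta$–level set and filling in behind it by the exponential stability of $u\equiv1$ from Theorem \ref{stability of const equi solu thm} shows that $\{u\ge\theta\}$ advances at speed $2\sqrt{(1-\theta)\langle a\rangle}$, where $\langle a\rangle$ is the running average. For $c_{\sup}^*(\omega)\ge\bar c^*$ I would run this on a $\limsup$–realizing interval $[s_n,s_n+t_n]$ of average at least $\bar a-\epsilon$; letting $\theta,\epsilon\downarrow0$, a fixed bump $u_0\in X_c^+$ satisfies $u(t_n,x_n;u_0,\theta_{s_n}\omega)\ge\eta$ at some $x_n$ with $ct_n\le x_n<2\sqrt{\bar a}\,t_n$ whenever $c<\bar c^*$, so $c\notin C_{\sup}(\omega)$. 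It remains to prove $c_{\inf}^*(\omega)\ge\underline c^*$, i.e. that $u\to1$ on all of $|x|\le ct$ for every $c<\underline c^*$, and this uniformly in the shift $s$.

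That uniform inner bound is the step I expect to be the real obstacle: the convergence to $1$ must hold in the worst–case medium over all shifts at once, yet no pointwise bound $a(\theta_\tau\omega)\ge\underline a-\epsilon$ is available, so a naive spreading subsolution can be stalled on the subintervals where $a$ dips. The remedy is the variational identity of Lemma \ref{prelim-lm2}, which yields $B\in W^{1,\infty}_{\rm loc}(\R)\cap L^\infty(\R)$ with $\operatorname*{essinf}_{\tau\in\R}\big(a(\theta_\tau\omega)-B'(\tau)\big)\ge\underline a-\epsilon$. Inserting this $B$ into the subsolution of Lemma \ref{prelim-lm4} trades the fluctuating rate for a uniformly positive effective rate $\underline a-\epsilon$, and, $B$ being bounded, moves the front only by a bounded amount, so its asymptotic speed stays $2\sqrt{\underline a-\epsilon}$ independently of $s$. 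Letting the exponent rise to the critical value $\mu\uparrow\sqrt{\underline a}$ and then $\epsilon\downarrow0$ gives $u\to1$ on $|x|\le ct$ for every $c<\underline c^*$ and every shift, hence $c\in C_{\inf}(\omega)$. Combining the four bounds yields $c_{\sup}^*(\omega)=\bar c^*$ and $c_{\inf}^*(\omega)=\underline c^*$ for every $\omega\in\Omega_0$.
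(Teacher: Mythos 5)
Your statement is one the paper does not actually prove: Theorem \ref{spreading-speeds-tm} is recalled verbatim from Part I of the series (\cite[Theorem 1.2]{SaSh1}), so the only fair comparison is with the toolkit from Part I that Section 2 reproduces. Measured against that, your two upper bounds are correct and essentially the standard argument: dominating $u$ by the linear equation, using the heat kernel, and using the uniformity in $s$ built into the definitions \eqref{a-least-mean}--\eqref{a-largest-mean} (window averages lie in $[\underline{a}-\epsilon,\bar a+\epsilon]$ once the window length exceeds some $r_0(\epsilon)$, while nearly extremal windows of arbitrary length exist) does give $c_{\sup}^*(\omega)\le\bar c^*$ and $c_{\inf}^*(\omega)\le \underline{c}^*$.

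The genuine gap is in both lower bounds, and it is the same gap twice: the subsolutions you propose to place underneath $u$ cannot be placed underneath $u$, because the theorem quantifies over compactly supported data $u_0\in X_c^+$. First, the assertion that ``where $u\le\theta$ one has $au(1-u)\ge(1-\theta)au$, so there $u$ dominates the solution of the linear equation'' is not an application of the comparison principle: $\{u\le\theta\}$ is an unknown time-dependent region and you have no control of $u$ on its free lateral boundary. Second, and more fundamentally, the remedy you invoke for the uniformity in $s$ --- inserting the $B$ of Lemma \ref{prelim-lm2} into the subsolution $\phi^{\mu,d,B}$ of Lemma \ref{prelim-lm4} --- produces a profile whose positive part is a half-line tail decaying like $e^{-\mu x}$ as $x\to+\infty$, whereas a solution issued from $u_0\in X_c^+$ decays at a Gaussian rate at every finite time (it is below the linear solution); hence no translate or small multiple of $\phi^{\mu,d,B}$ ever lies below $u(t,\cdot)$, and the comparison argument cannot even be initialized. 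These exponential-tail subsolutions are the right tool only for the front-like class $\tilde X_c^+$ --- this is precisely why Theorem \ref{spreading-speeds-tm} (compactly supported data) and Theorem \ref{spreading-speeds-tm-0} (front-like data) are separate statements. What the lower bounds actually require is a compactly supported subsolution, e.g.\ a Dirichlet eigenfunction $e^{-cx/2}\cos\big(\pi x/(2R)\big)$ carried by a window moving at speed $c$ and multiplied by $\exp\big(\int_0^t[(1-\theta)a(\theta_{\tau+s}\omega)-\tfrac{c^2}{4}-\tfrac{\pi^2}{4R^2}]\,d\tau\big)$, whose growth is controlled uniformly in $s$ by the window-average bound (run along $\limsup$-realizing intervals for part (i), and along arbitrary windows for part (ii)), followed by a bootstrap converting ``the level $\theta$ is reached on the moving window'' into positivity on the whole expanding region. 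Finally, your filling-in step also misquotes Theorem \ref{stability of const equi solu thm}: it requires $\inf_{x\in\R}u_0(x)>0$, while the spreading solution is bounded below only on an expanding region, so an additional localization argument is needed before that stability result can be applied.
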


Let
 $$
\tilde  X_c^+=\{u\in C_{\rm unif}^b(\R)\,|\, u\ge 0,\,\,  \liminf_{x\to -\infty}u_0(x)>0,\,\, u_0(x)=0\,\, {\rm for}\,\, x\gg 1\}.
 $$

 \begin{defin}
 \label{spreading-speed-def1}
 For given $\omega\in\Omega$, let
 $$
 \tilde C_{\sup}(\omega)=\{c\in\R^+\,|\, \limsup_{t\to\infty}\sup_{s\in\R,x\ge  ct}u(t,x;u_0,\theta_s\omega)=0\quad \forall\,\, u_0\in \tilde X_c^+\}
 $$
 and
 $$
 \tilde C_{\inf}(\omega)=\{c\in\R^+\,|\, \limsup_{t\to\infty}\sup_{s\in\R,x\le ct} |u(t,x;u_0,\theta_s\omega)-1|=0\quad \forall\,\, u_0\in \tilde X_c^+\}.
 $$
 Let
 $$
 \tilde c_{\sup}^*(\omega)=\inf\{c\,|\, c\in \tilde C_{\sup}(\omega)\},\quad \tilde c_{\inf}^*(\omega)=\sup\{c\,|\, c\in \tilde C_{\inf}(\omega)\}.
 $$
 $[\tilde c_{\inf}^*(\omega),\tilde c_{\sup}^*(\omega)]$ is called the {\rm spreading speed interval} of \eqref{Main-eq}  with respect to front-like initial functions.
 \end{defin}
Let $u_0^*(\cdot)$ be the function satisfying that
$u_0^*(x)=1$ for $x< 0$ and $u_0^*(x)=0$ for $x>0$.  Note that $u(t,x;u_0^*,\omega)$ exists (see \cite[Theorem 1]{KPP}).

\begin{tm}\label{spreading-speeds-tm-0}
Assume that {\bf (H1)} holds. Then the following hold.
\begin{itemize}
\item[(1)] (\cite[Theorem 1.3]{SaSh1}) For any $\omega\in\Omega_0$, $\tilde c_{\sup}^*(\omega)=\bar{c}^*$ and  $\tilde c_{\inf}^{*}(\omega)=\underline{c}^*$.

\item[(2)] (\cite[Theorem 1.4]{SaSh1})     For a.e. $\omega\in\Omega$,
\begin{equation}
\label{average-speed-eq}
\lim_{t\to\infty}\frac{x(t,\omega)}{t}= \hat c^*,
\end{equation}
where $x(t,\omega)$ is such that $u(t,x(t,\omega);u_0^*,\omega)=\frac{1}{2}$.  Moreover,
 for any $u_0\in \tilde X_c^+$, it holds that
\begin{equation}\label{asymptoc-tail-eq3}
\lim_{t\to\infty}\sup_{x\geq(\hat c^*+h)t}u(t,x;u_0,\omega)=0, \forall\ h>0, \ \text{a.e }\ \omega
\end{equation}
and
\begin{equation}\label{asymptoc-tail-eq4}
\lim_{t\to\infty}\inf_{x\leq (\hat c^*-h)t}u(t,x;u_0,\omega)=1, \forall\ h>0, \ \text{a.e\  }\ \omega.
\end{equation}
\end{itemize}
\end{tm}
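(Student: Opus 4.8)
The plan is to establish both parts of Theorem~\ref{spreading-speeds-tm-0} by trapping $u(t,x;u_0,\omega)$ between the exponential super- and subsolutions of Lemmas~\ref{prelim-lm3} and \ref{prelim-lm4}, the only difference between part (1) and part (2) being which temporal average of $a(\theta_\tau\omega)$ controls the front. In part (1) the spreading sets must be uniform in the shift $s$, so the worst-case averages $\bar a$ and $\underline a$ from \eqref{omega-0} govern the motion; in part (2) there is no shift, and the Birkhoff average $\hat a$ takes over. The common mechanism is that once $u$ sits below a moving exponential I read off the front speed from $\frac1t\int_s^{s+t}a(\theta_\tau\omega)\,d\tau$, and once $u$ is bounded below by a positive constant on a large region I invoke the stability Theorem~\ref{stability of const equi solu thm} and the uniform-tail Theorem~\ref{uniform-tail-tm} to push it up to $1$.

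For the outer speed in part (1), to get $\tilde c_{\sup}^*(\omega)\le\bar c^*$ I would apply Lemma~\ref{prelim-lm3} to $b(\cdot)=a(\theta_{\cdot+s}\omega)$ with the optimal exponent $\mu=\sqrt{\bar a}$ (after a fixed spatial shift so that $u_0\le\phi_+^\mu(0,\cdot-L)$, legitimate since $u_0\in\tilde X_c^+$ vanishes for $x\gg1$); the supersolution property holds for every $\mu>0$ because $\phi^\mu$ solves the linear equation~\eqref{b-eq4}. Since the largest-mean definition gives $\frac1t\int_s^{s+t}a\le\bar a+\epsilon$ uniformly in $s$ for $t$ large, the front of $\phi_+^\mu$ moves at speed at most $\sqrt{\bar a}+(\bar a+\epsilon)/\sqrt{\bar a}=\bar c^*+o(1)$, forcing $u\to0$ on $x\ge ct$ for every $c>\bar c^*$. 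The reverse inequality is comparison: a front-like datum dominates a compactly supported bump, so Theorem~\ref{spreading-speeds-tm}(i) transfers. For the inner speed, $\tilde c_{\inf}^*(\omega)\ge\underline c^*$ follows from Lemma~\ref{prelim-lm4} with $\mu\uparrow\sqrt{\underline a}$: the least-mean definition yields $\frac1t\int_s^{s+t}a\ge\underline a-\epsilon$ for every $s$, so the subsolution's leading edge outruns $ct$ uniformly in $s$ whenever $c<\underline c^*$, after which Theorem~\ref{stability of const equi solu thm} drives $u$ to $1$ behind it; conversely, choosing windows $[s_n,s_n+t_n]$ on which $\frac1{t_n}\int_{s_n}^{s_n+t_n}a\to\underline a$ and applying the supersolution keeps $u$ away from $1$ between $\underline c^*t_n$ and $ct_n$ for $c>\underline c^*$, so uniform-in-$s$ convergence to $1$ fails and $\tilde c_{\inf}^*(\omega)\le\underline c^*$.

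For part (2) the upper tail \eqref{asymptoc-tail-eq3} is once more a supersolution estimate, now with $\mu=\sqrt{\hat a}$ and no shift: by the Birkhoff limit $\frac1t\int_0^t a(\theta_\tau\omega)\,d\tau\to\hat a$ recorded in \eqref{omega-0}, the front of $\phi_+^{\sqrt{\hat a}}$ has asymptotic speed $\hat c^*$, so $u(t,x;u_0,\omega)\to0$ for $x\ge(\hat c^*+h)t$. The lower tail \eqref{asymptoc-tail-eq4} is the substantive point: I would build a compactly supported subsolution in the frame moving at fixed speed $c=\hat c^*-h$, noting that conjugating $\partial_{xx}+c\partial_x+a(\theta_t\omega)$ by $e^{-cx/2}$ turns the averaged operator into $\partial_{xx}+(\hat a-c^2/4)$, whose principal eigenvalue on a long interval is strictly positive since $c<\hat c^*$ forces $\hat a-c^2/4>0$. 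The ergodic theorem then makes the growth accumulated over $[0,t]$ tend to $+\infty$, so a small multiple of the principal eigenfunction grows until the nonlinearity saturates it near a positive constant; Theorems~\ref{stability of const equi solu thm} and \ref{uniform-tail-tm} upgrade this to $\inf_{x\le ct}u\to1$. Sandwiching the level set $x(t,\omega)$, where $u(t,x(t,\omega);u_0^*,\omega)=\tfrac12$, between $(\hat c^*-h)t$ and $(\hat c^*+h)t$ for all $h>0$ then gives \eqref{average-speed-eq}.

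The main obstacle is precisely the sharp lower tail \eqref{asymptoc-tail-eq4}, that is, producing spreading at the \emph{average} speed $\hat c^*$ rather than the worst-case $\underline c^*$. The ready-made subsolution of Lemma~\ref{prelim-lm4} is bound to the least mean (it requires $\mu<\sqrt{\underline a}$) and only controls the exponential leading edge, so it cannot by itself deliver $\hat c^*$; one must instead exploit the full-interval ergodic average, either through the generalized-principal-eigenvalue construction above or, more elegantly, by verifying that the level-set position $x(t,\omega)$ is subadditive and applying the subadditive ergodic theorem to identify its slope as $\hat c^*$. The delicate points will be the passage from exponential growth of the linearized subsolution to saturation near $1$, the gluing of the interior region (where $u\approx1$) to the moving leading edge, and keeping the a.e.-fixed-$\omega$ statements of part (2) cleanly separated from the uniform-in-shift statements of part (1).
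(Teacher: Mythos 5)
You should first be aware that this paper contains no proof of Theorem~\ref{spreading-speeds-tm-0} at all: both items are imported verbatim from Part~I of the series (\cite[Theorems 1.3 and 1.4]{SaSh1}), and the present paper only cites them. So there is no in-paper argument to compare against, and your proposal has to stand on its own. On its own terms, your skeleton is the standard (and almost certainly the intended) route: exponential supersolutions $\min\{1,e^{-\mu(x-C(t))}\}$ for the outer bounds, with the correct observation that the supersolution property holds for every $\mu>0$ (not merely $\mu<\sqrt{\underline a}$ as in the stated range of Lemma~\ref{prelim-lm3}), which is exactly what lets you reach the optimal exponents $\mu=\sqrt{\bar a}$ and $\mu=\sqrt{\hat a}$; window selection along $[s_n,s_n+t_n]$ realizing the least mean for $\tilde c^*_{\inf}(\omega)\le \underline c^*$; and a moving-frame principal-eigenvalue growth argument, powered by the Birkhoff average $\hat a-c^2/4>0$, for the sharp lower tail \eqref{asymptoc-tail-eq4}.

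There are, however, two concrete gaps. First, the "upgrade to $1$" step is not legitimate as written: Theorem~\ref{stability of const equi solu thm} requires $\inf_{x\in\R}u_0(x)>0$, which fails for every $u_0\in\tilde X_c^+$ (such data vanish for $x\gg 1$, and at the relevant times the solution is only known to be positive on a bounded moving region), while Theorem~\ref{uniform-tail-tm} concerns entire solutions of the moving-coordinate equation \eqref{moving coordinate -eq1} that are monotone in $x$ — the Cauchy solution $u(t,\cdot;u_0,\omega)$ is neither entire nor monotone. The actual crux of \eqref{asymptoc-tail-eq4} is to propagate the positivity obtained at the moving edge $x\approx ct$, $c<\hat c^*$, into uniform convergence to $1$ on the whole cone $\{x\le(\hat c^*-h)t\}$; the natural tool for this is the uniform-in-$s$ inner spreading of Theorem~\ref{spreading-speeds-tm}(ii) applied to small compactly supported bumps planted behind the edge at times $t_0$ and run for times $\tau=\epsilon t_0$, glued along all rays of speed at most $\hat c^*-h$ — precisely the step you defer to "delicate points." Second, the alternative route via the subadditive ergodic theorem is not as immediate as stated: $x(t,\omega)$ is not exactly subadditive, because $u(t,\cdot;u_0^*,\omega)$ is not dominated by any translate of $u_0^*$; one only gets approximate subadditivity $x(t+s,\omega)\le x(s,\omega)+x(t,\theta_s\omega)+C$ after a steepness/intersection-number comparison controlling the exponential tails, and even granting the existence of the limit, identifying it with $\hat c^*=2\sqrt{\hat a}$ still requires both the supersolution upper bound and the eigenvalue lower bound, so this route does not bypass the first gap. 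In short: the approach is the right one, but the decisive assertion \eqref{asymptoc-tail-eq4} — which you yourself single out as the substantive point — remains unproven in the sketch.
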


\section{Existence of random transition fronts}

In this section, we study the existence of random traveling wave solutions of \eqref{Main-eq}.

The main results of this section are stated in the following two theorems.

\begin{tm}
\label{average-speed-thm} Assume that {\bf (H1)} holds.
Let $u(t,x;U(\cdot,\omega),\omega)=U(x-C(t,\omega),\theta_t\omega)$ be a monotone transition front solution of \eqref{Main-eq}. Then there is
$\hat c\ge 2\sqrt {\hat a}$ such that
 \begin{equation}
\lim_{t\to\infty}\frac{C(t,\omega)}{t}=\hat {c},\quad \text{for a.e }\ \omega\in\Omega.
\end{equation}
\end{tm}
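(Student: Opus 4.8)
The plan is to track the front through the position of its level set $\{u=\tfrac12\}$, to obtain the lower bound $2\sqrt{\hat a}$ from the known average spreading speed by a comparison argument, and to obtain the existence of the limit by identifying the exponential decay rate of the profile at $+\infty$ and squeezing $C(t,\omega)$ between the explicit super- and sub-solutions of Lemmas \ref{prelim-lm3} and \ref{prelim-lm4}.

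First I would exploit the uniform tail condition \eqref{tail-eq}. Since $U(\cdot,\theta_t\omega)$ is decreasing with $U(-\infty,\theta_t\omega)=1$ and $U(+\infty,\theta_t\omega)=0$ uniformly in $t$, for each $\lambda\in(0,1)$ there is $L=L(\lambda)$, independent of $t$ and of $\omega$ in a full-measure invariant set, such that $U(\xi,\theta_t\omega)\ge 1-\lambda$ for $\xi\le-L$ and $U(\xi,\theta_t\omega)\le\lambda$ for $\xi\ge L$. Thus the transition region of $u(t,\cdot;\omega)=U(\cdot-C(t,\omega),\theta_t\omega)$ has width bounded uniformly in $t$, and the level-$\tfrac12$ point $X(t,\omega)$ obeys $|X(t,\omega)-C(t,\omega)|\le L(\tfrac12)$. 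Hence $\lim_t C(t,\omega)/t$ exists if and only if $\lim_t X(t,\omega)/t$ does, with the same value, so it suffices to study the interface. For the lower bound, let $u_0$ agree with $U(\cdot-C(0,\omega),\omega)$ on $(-\infty,M]$ and vanish on $(M,\infty)$; for $M$ large this lies in $\tilde X_c^+$ and satisfies $u_0\le u(0,\cdot;\omega)$, so by the comparison principle $u(t,x;u_0,\omega)\le U(x-C(t,\omega),\theta_t\omega)$ for $t\ge0$. Applying \eqref{asymptoc-tail-eq4}, for every $h>0$ and all large $t$ we get $U\!\big((\hat c^*-h)t-C(t,\omega),\theta_t\omega\big)\ge 1-\delta$; choosing $\delta$ so that $1-\delta$ exceeds the value of $U$ at width $L(\tfrac12)$ forces $(\hat c^*-h)t-C(t,\omega)\le L$, whence $\liminf_{t\to\infty}C(t,\omega)/t\ge\hat c^*-h$. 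Letting $h\downarrow0$ gives $\liminf_{t\to\infty}C(t,\omega)/t\ge\hat c^*=2\sqrt{\hat a}$.

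The main work is the existence of the limit, for which I would analyze the leading edge. Writing the front in the moving frame via \eqref{moving coordinate -eq1} with $c(t,\omega)=C'(t,\omega)$, the profile should decay like $e^{-\mu_0\xi}$ as $\xi\to+\infty$ for a definite rate $\mu_0\in(0,\sqrt{\underline a}]$, uniformly in $t$. Granting this, for each $\mu<\mu_0$ a suitably shifted super-solution $\phi^\mu_+(\cdot,\cdot;a^\omega)$ from Lemma \ref{prelim-lm3} dominates $u(0,\cdot;\omega)$ (uniformly in $t$, by the uniform tail), so by comparison $C(t,\omega)\le C(t;a^\omega,\mu)+O(1)$ and hence $\limsup_t C(t,\omega)/t\le\mu+\hat a/\mu$; while for each $\mu\in(\mu_0,\sqrt{\underline a})$ the sub-solution $\phi^{\mu,d,B}$ from Lemma \ref{prelim-lm4} lies below the front, giving $C(t,\omega)\ge C(t;a^\omega,\mu)-O(1)$ and $\liminf_t C(t,\omega)/t\ge\mu+\hat a/\mu$. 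Here I use \eqref{b-eq2} together with the ergodic limit $\tfrac1t\int_0^t a(\theta_\tau\omega)\,d\tau\to\hat a$ from \eqref{omega-0}, which yields $C(t;a^\omega,\mu)/t\to\mu+\hat a/\mu$. Since $\mu_0\le\sqrt{\underline a}\le\sqrt{\hat a}$, the map $\mu\mapsto\mu+\hat a/\mu$ is decreasing across $\mu_0$, so letting $\mu\uparrow\mu_0$ in the upper estimate and $\mu\downarrow\mu_0$ in the lower estimate squeezes both to the common value $\mu_0+\hat a/\mu_0$. Therefore $\lim_t C(t,\omega)/t=\mu_0+\hat a/\mu_0\ge 2\sqrt{\hat a}$ by the arithmetic--geometric mean inequality; that $\mu_0$, and hence $\hat c$, is a.e. constant follows from the $\theta_t$-invariance of the (uniform) decay rate together with ergodicity, exactly as in \eqref{omega-0}.

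The hard part is establishing the clean, $t$-uniform exponential decay rate $\mu_0$ of $U(\cdot,\theta_t\omega)$ at $+\infty$: matching bounds $k\,e^{-\mu_0\xi}\le U(\xi,\theta_t\omega)\le K\,e^{-\mu_0\xi}$ for large $\xi$, uniformly in $t$, are what make the initial super/sub-solution comparisons realizable with shifts bounded uniformly in $t$ (this uniformity is precisely what \eqref{tail-eq} is meant to supply, but promoting the uniform tail to a uniform \emph{exponential} rate requires a leading-edge argument using the linearization of \eqref{moving coordinate -eq1} at $u=0$). The degenerate case $\mu_0=\sqrt{\underline a}$, where the admissible sub-solution window $(\mu_0,\sqrt{\underline a})$ is empty, would need a separate limiting argument through the critical front constructed in the existence theorem.
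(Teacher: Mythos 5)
Your lower bound $\liminf_{t\to\infty} C(t,\omega)/t \ge 2\sqrt{\hat a}$ is fine and is essentially the paper's own argument (comparison with front-like initial data plus Theorem \ref{spreading-speeds-tm-0}). The existence of the limit, however, is where your proposal has a genuine gap, and you have flagged it yourself: the entire squeeze argument is conditional on the profile having a $t$-uniform exponential decay rate $\mu_0$ with matching two-sided bounds $k\,e^{-\mu_0\xi}\le U(\xi,\theta_t\omega)\le K\,e^{-\mu_0\xi}$. Nothing in the hypotheses supplies this. The definition of a monotone random transition front only requires the uniform tail condition \eqref{tail-eq}, i.e. $U(\xi,\theta_t\omega)\to 0$ uniformly in $t$; it says nothing about exponential decay, let alone a single well-defined rate with matching constants. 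Promoting uniform decay to a precise exponential rate is itself a substantial theorem (of Hamel--Rossi type), not a routine leading-edge linearization, and the statement being proved applies to an \emph{arbitrary} monotone transition front --- for instance the critical one constructed in Theorem \ref{Existence of random transition front}(2) as a limit of level-set-normalized solutions with Heaviside initial data, for which no exponential rate is known a priori. On top of this, your sub-solution window $(\mu_0,\sqrt{\underline a})$ is empty precisely in the critical case $\mu_0=\sqrt{\underline a}$, which you defer to ``a separate limiting argument''. So the central step of your proof is missing, not merely technical.

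The idea you are missing is much simpler and is the heart of the paper's proof: monotonicity of the profile forces the position function to be an additive cocycle. Writing $u(s,x;U(\cdot,\theta_t\omega),\theta_t\omega)$ in two ways --- directly as $U(x-C(s;\theta_t\omega),\theta_{t+s}\omega)$, and via the entire solution as $u(t+s,x+C(t;\omega);U(\cdot,\omega),\omega)=U(x+C(t;\omega)-C(t+s;\omega),\theta_{t+s}\omega)$ --- and using that $U(\cdot,\theta_{t+s}\omega)$ is strictly decreasing (hence injective), you obtain
\begin{equation*}
C(t+s;\omega)=C(t;\omega)+C(s;\theta_t\omega),\qquad s\ge 0,\ t\in\R .
\end{equation*}
The subadditive ergodic theorem applied to this cocycle immediately yields a constant $\hat c$ with $\lim_{t\to\infty}C(t;\omega)/t=\hat c$ for a.e.\ $\omega$, with no information about decay rates needed at all; combined with your (correct) lower bound this finishes the theorem. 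Note also that your squeeze, even if completed, would only identify $\hat c$ for fronts that do possess an exponential rate, so it cannot substitute for the cocycle argument in the generality the theorem requires.
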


 \begin{tm}\label{Existence of random transition front} Assume that {\bf (H1)} holds.
 \begin{itemize}
 \item[(1)]  For any given $c > \underline{c}^*$, there is a monotone
random transition front of \eqref{Main-eq} with least mean speed $\underline{c}=c$. In particular, for given $c>\underline{c}^*$, let  $\mu\in(0, \sqrt{\underline{a}})$ be such that $\underline{c}=\frac{\mu^2+\underline{a}}{\mu}$. Then
      \eqref{Main-eq} has a monotone random transition wave solution $u(t,x)=U^\mu(x-C(t;\omega,\mu),\theta_t\omega)$ with  $C(t;\omega,\mu)=\int_0^t c(s;\omega,\mu)ds$, where
\begin{equation}\label{Exist-tm-eq1}
c(t;\omega,\mu)=\frac{\mu^2+a(\theta_t\omega)}{\mu},
 \end{equation}
and hence  $\hat c=\frac{\mu^2+\hat a}{\mu} > \hat c^*$ and $\underline{c}=\frac{\mu^2+\underbar a}{\mu} >\underline{c}^*$.
Moreover, for any $\omega\in\Omega_0$,
\begin{equation}\label{Exist-tm-eq2}
\lim_{x\to \infty}\sup_{t\in\R}\Big|\frac{U^\mu(x,\theta_t\omega)}{e^{-\mu x}}-1\Big|=0 \quad \text{and}\quad \lim_{x\to -\infty}\sup_{t\in\R}|U^\mu(x,\theta_t\omega)-1|=0.
\end{equation}

\item[(2)] There is a monotone random transition front of \eqref{Main-eq}  with least mean speed  $\underline{c}^*$.

\item[(3)] There is no random transition front of \eqref{Main-eq}  with  least mean  speed less than $\underline{c}^*$.
\end{itemize}
\end{tm}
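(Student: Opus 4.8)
The plan is to handle the three parts in turn, building everything on the supersolution/subsolution machinery already assembled in Lemmas 2.3--2.5 and the spreading-speed results of Section 2. For part (1), I would fix $c>\underline{c}^*$ and the corresponding $\mu\in(0,\sqrt{\underline a})$ with $c=(\mu^2+\underline a)/\mu$, and construct the wave profile as a limit of solutions started from increasingly far-back initial times. Concretely, I would set $b(t)=a(\theta_t\omega)$ and work in the moving frame via \eqref{moving coordinate -eq1} with the speed $c(t;\omega,\mu)$ of \eqref{Exist-tm-eq1}. Lemma \ref{prelim-lm3} gives the supersolution $\phi_+^\mu$, while Lemma \ref{prelim-lm4} supplies a matching subsolution $\phi^{\mu,d,B_b}$ on the region where it is positive; taking the maximum of the subsolution with $0$ and using the comparison principle traps the solution between the two. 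I would then define, for $t_0\to-\infty$, the solutions $u(t,x;\phi_+^\mu(t_0,\cdot),\theta_{t_0}\omega)$ shifted into the moving frame, and extract an entire solution $U^\mu(x,\theta_t\omega)$ by a compactness (parabolic estimates plus diagonal) argument, monotonicity in $t_0$ guaranteeing convergence. Lemma \ref{prelim-lm6} then delivers the two tail limits: the front-like decay to $0$ as $x\to+\infty$ and convergence to $1$ as $x\to-\infty$, uniformly in $t$, which is exactly \eqref{tail-eq}. The refined tail asymptotics \eqref{Exist-tm-eq2} I would get by squeezing $U^\mu$ between the exponential supersolution $e^{-\mu x}$ and the subsolution $e^{-\mu x}-d\,e^{(\tilde\mu/\mu-1)B_b(t)-\tilde\mu x}$ from Lemma \ref{prelim-lm4}, reading off that $U^\mu(x,\theta_t\omega)/e^{-\mu x}\to1$. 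Measurability in $\omega$ follows from Lemma \ref{prelim-lm5}. Finally, the least-mean-speed computation is essentially algebraic: since $C(t;\omega,\mu)=\int_0^t(\mu^2+a(\theta_\tau\omega))/\mu\,d\tau$, the $\liminf$ defining \eqref{least-mean-speed-eq} is $(\mu^2+\underline a)/\mu=\underline c$ by the definition \eqref{a-least-mean} of $\underline a$ and \eqref{omega-0}, and likewise $\lim_t C(t)/t=(\mu^2+\hat a)/\mu=\hat c$.

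For part (2), the critical front with least mean speed exactly $\underline c^*=2\sqrt{\underline a}$, I would take a sequence $c_n\downarrow\underline c^*$ with associated profiles $U^{\mu_n}$ from part (1), where $\mu_n\uparrow\sqrt{\underline a}=\underline\mu^*$, and pass to a limit. The subtlety is that the exponential ansatz degenerates at the critical speed, so I would normalize each profile by fixing its value at a reference point (say $U^{\mu_n}(0,\omega)=1/2$, using the strict monotonicity and Lemma \ref{prelim-lm5} to locate it), then use uniform parabolic estimates to extract a limiting entire solution $U^*$. The tail condition \eqref{tail-eq} for the limit is the delicate point: I would invoke Theorem \ref{uniform-tail-tm}, whose hypotheses (entire, monotone in $x$, $0<v<1$, bounded speed, and $\inf_t v(t,x^*)>0$ at some point) are designed precisely to upgrade a pointwise lower bound into the uniform limit $\lim_{x\to-\infty}U^*=1$. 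That $U^*$ is nontrivial (does not collapse to $0$ or $1$) follows from the normalization surviving the limit.

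For part (3), nonexistence below $\underline c^*$, I would argue by contradiction using the spreading-speed characterization. Suppose $u(t,x;\omega)=U(x-C(t;\omega),\theta_t\omega)$ were a transition front with $\underline c=\liminf_{t-s\to\infty}(C(t)-C(s))/(t-s)<\underline c^*$. Then along a sequence of time windows the front advances at average speed strictly below $\underline c^*=\tilde c_{\inf}^*(\omega)$. Since the profile is front-like, it serves as a legitimate initial datum in the class $\tilde X_c^+$, so Theorem \ref{spreading-speeds-tm-0}(1) forces the solution to converge to $1$ uniformly on $\{x\le ct\}$ for every $c<\underline c^*$; but the front, lagging behind speed $\underline c^*$ on those windows, would then have to equal $1$ at points where its profile is supposed to lie strictly between $0$ and $1$, contradicting \eqref{tail-eq}.

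The main obstacle I anticipate is the critical case in part (2): constructing the limiting profile at the degenerate speed and, above all, verifying the tail condition \eqref{tail-eq}, since the exponential control that drives part (1) breaks down exactly at $\mu=\underline\mu^*$. This is where Theorem \ref{uniform-tail-tm} must do the heavy lifting, and checking its hypothesis $\inf_{t\in\R}v(t,x^*;\omega)>0$ for the limit profile—rather than for each approximant—is the step requiring genuine care.
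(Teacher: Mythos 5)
Your parts (1) and (3) follow essentially the same route as the paper. For (1), the monotone pull-back limit of $u(t,x+C(t;\theta_{-t}\omega,\mu);\phi_+^\mu,\theta_{-t}\omega)$, squeezed between the supersolution $\phi_+^\mu$ and the subsolution of Lemma \ref{sub-sol-lem}, with tails from Lemma \ref{prelim-lm6} and the algebraic least-mean-speed computation, is exactly the paper's argument. For (3), your spreading-speed comparison is the paper's proof up to one small repair: the profile $U(\cdot,\theta_s\omega)$ is \emph{not} itself an admissible datum in $\tilde X_c^+$ (members of that class vanish for $x\gg 1$, whereas $U>0$ everywhere); as in the paper you must choose a compactly supported $u_0$ with $u_0\le U(\cdot,\theta_s\omega)$ uniformly in $s$ — possible since $\inf_{x\le z}\inf_{s}U(x,\theta_s\omega)>0$ — and then run the comparison with Theorem \ref{spreading-speeds-tm}.

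Part (2) is where there is a genuine gap, and it sits precisely at the point you flag but do not resolve. The paper does \emph{not} build the critical front as a limit of the supercritical fronts $U^{\mu_n}$, $\mu_n\uparrow\sqrt{\underline{a}}$; it instead evolves the Heaviside datum $u_0^*$ from times $-t$, recenters at the half-level point $x(t,\theta_{-t}\omega)$, gets monotone convergence from the steepness/intersection Lemma \ref{minimal-front-lem1} (so no compactness-normalization issues arise), obtains the uniform tails by sandwiching between shifts of the supercritical fronts (inequality \eqref{g-e2-2}), and proves the upper speed bound $\liminf_{t-s\to\infty}\frac{C(t;\omega)-C(s;\omega)}{t-s}\le 2\sqrt{\underline{a}}$ by a separate comparison argument (Step 4), the lower bound coming from part (3). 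Your scheme breaks down as follows. The shifts $\xi_n(\theta_t\omega)$ needed to normalize $U^{\mu_n}(\cdot,\theta_t\omega)$ at level $\tfrac12$ are bounded above uniformly (by $\ln 2/\mu_n$, since $U^{\mu_n}\le e^{-\mu_n x}$) but admit no uniform lower bound: the subsolution of Lemma \ref{sub-sol-lem} degenerates as $\mu_n\uparrow\sqrt{\underline{a}}$ because one is forced to take $\tilde\mu_n-\mu_n\to 0$, so $d_n\to\infty$. In fact the shifts \emph{must} run to $-\infty$: already for $a\equiv\mathrm{const}$, any locally uniform limit $\psi$ of $U^{\mu_n}$ inherits $\psi(x)\le e^{-\sqrt{a}\,x}$, while every translate of the critical front decays like $(Ax+B)e^{-\sqrt{a}\,x}$ with $A>0$ and hence lies above $e^{-\sqrt a x}$ far out; so $U^{\mu_n}\to 0$ locally uniformly and $\xi_n\to-\infty$. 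Consequently, (i) the un-recentered fronts collapse in the limit, and (ii) if you recenter at every time $t$ — which is what you need so that the limit satisfies $\inf_t v(t,0)=\tfrac12>0$ and Theorem \ref{uniform-tail-tm} applies — then the moving frame of the $n$-th recentered front is $C(t;\omega,\mu_n)+\xi_n(\theta_t\omega)-\xi_n(\omega)$, and you have no uniform-in-$n$, locally-uniform-in-$t$ control of $\xi_n(\theta_t\omega)-\xi_n(\omega)$. Without that control you can neither extract a limiting entire solution in a well-defined moving frame, nor conclude that the limiting front has least mean speed $\le \underline{c}^*$ (part (3) only gives $\ge$; the $\le$ direction requires something like the paper's Step-4 comparison, absent from your sketch). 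In the autonomous case this is rescued by ODE/phase-plane rigidity of traveling waves, but no such rigidity exists in the random setting — which is exactly why the paper abandons the limit-of-supercritical-fronts strategy in favor of the step-function construction.
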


\begin{rk}  Observe that the function $(0,\sqrt{\underline{a}})\ni\mu \mapsto \frac{\mu^2+\hat{a}}{\mu}$ is continuous and decreasing with $\lim_{\mu\to \sqrt{\underline{a}}^-}\frac{\mu^2+\hat{a}}{\mu}=\frac{\underline{a}+\hat{a}}{\sqrt{\underline{a}}}$. Hence it follows from Theorem \ref{Existence of random transition front} (1) that for every $c>\frac{\underline{a}+\hat{a}}{\sqrt{\underline{a}}}(\ge 2\sqrt{\hat a}=\hat c^*)$, \eqref{Main-eq} has a transition wave solution $u(t,x)=U^\mu(x-C(t;\omega,\mu),\theta_t\omega)$ with average mean $c$. We refer the random transition front in  Theorem \ref{Existence of random transition front} (2) as the critical transition front of
\eqref{Main-eq}. It follows from Theorem \ref{average-speed-thm} and the proof of Theorem \ref{Existence of random transition front} (2) (mainly inequality \eqref{upper-bound-eq-for-ave-mean}) that the average speed, $\hat{c}_{\rm crit}$, of the critical random transition front exists and $\hat{c}_{\rm crit}\in [2\sqrt{\hat a}, \frac{\underline{a}+\hat{a}}{\sqrt{\underline{a}}}]$. It can also be shown that the critical random transition front has the minimum average speed. It remains open whether $\hat{c}_{\rm crit}=2\sqrt{\hat{a}}$. It also remains open whether \eqref{Main-eq} has random transition fronts with average speed $\hat c\in (\hat{c}_{\rm crit}, \frac{\underline{a}+\hat{a}}{\sqrt{\underline{a}}}]$ if this interval is not empty.
\end{rk}

\subsection{Proof of Theorem \ref{average-speed-thm}}

In this subsection, we give a proof of Theorem \ref{average-speed-thm}

\begin{proof} [Proof of Theorem \ref{average-speed-thm}]
Suppose $u(t,x;\omega)=U(x-C(t;\omega),\theta_t\omega)$ is a monotone transition front of \eqref{Main-eq}.

First, we claim that for a.e. $\omega\in\Omega$,
\begin{equation}
\label{average-speed-eq1}
 C(t+s;\omega)=C(t;\omega)+C(s;\theta_t\omega), \quad \forall\ s\ge 0,\ t\in\R.
 \end{equation}
Indeed, for a.e. $\omega\in\Omega$, we have
\begin{align*}
U(x-C(s;\theta_t\omega),\theta_{t+s}\omega)=& u(s,x;U(\cdot,\theta_t\omega),\theta_t\omega)\\
=&u(s,x;u(t,\cdot+C(t,\omega);U(\cdot,\omega),\omega),\theta_t\omega)\\
=&u(t+s,x+C(t;\omega);U(\cdot,\omega),\omega)\\
=& U(x+C(t;\omega)-C(t+s;\omega),\theta_{t+s}\omega), \quad \forall\ x\in\R, \ s\ge 0,\ t\in\R.
\end{align*}
Since the function $x\mapsto U(x,\theta_{t+s}\omega)$ is decreasing for a.e $\omega\in\Omega$, then we have that for a.e. $\omega\in\Omega$,
$$
x-C(s;\theta_t\omega)=x+C(t;\omega)-C(t+s;\omega), \quad \forall\ s,t>0. $$
This implies that \eqref{average-speed-eq1} holds.

Next, by \eqref{average-speed-eq1} and  subadditive  ergodic  theorem, there is $\hat c\in\R$ such that for a.e.
$\omega\in\Omega$, there holds
\begin{equation}
\label{average-speed-eq2}
\lim_{t\to\infty}\frac{C(t;\omega)}{t}=\hat c.
\end{equation}

We show now $\hat c\ge 2\sqrt {\hat a}$. By Theorem \ref{spreading-speeds-tm-0},
$$
1=\lim_{t\to\infty}u(t,(2\sqrt{\hat{a}}-\varepsilon)t;U(\cdot;\omega),\omega)\leq \liminf_{t\to\infty}U((2\sqrt{\hat{a}}-\varepsilon)t-C(t,\omega),\theta_t\omega).
$$
Note that $\lim_{x\to \infty}U(x,\theta_t\omega)=0$ uniformly in $t\in\R$. Hence there is $M(\omega)>0$ such that
$$
(2\sqrt{\hat{a}}-\varepsilon)t\leq C(t,\omega) +M(\omega), \quad \forall\ t\gg 1.
$$
This implies that
$$
2\sqrt{\hat{a}}-\varepsilon\leq \lim_{t\to\infty}\frac{C(t,\omega)}{t}=\tilde{c}^*, \quad \forall\ \varepsilon>0.
$$
Letting $\varepsilon\to0$, we have $\hat c\ge 2\sqrt {\hat a}$.
\end{proof}

\subsection{Proof of Theorem \ref{Existence of random transition front} (1)}

In this subsection, we present the proof of Theorem \ref{Existence of random transition front} (1).
We first prove some lemmas.   We shall always suppose that  {\bf (H1)}  holds.

Let $\Omega_0$ be as in \eqref{omega-0}. Hence
$$
\lim_{t\to\infty}\frac{1}{t}\int_0^t a(\theta_\tau\omega)d\tau=\int_\Omega a(\omega) d\P(\omega),\quad 0<\underline{a}(\omega)\le \overline{a}(\omega)<\infty, \quad \forall\,\omega\in\Omega_0.
$$

Recall that, if  $u(t,x)=v(t,x-C(t;\omega))$ with $C(t;\omega)$ being differential in $t$  solves \eqref{Main-eq}, then  $v(t,x)$ satisfies
\eqref{moving coordinate -eq1}.
Hence, to prove the existence of random transition fronts of \eqref{Main-eq} of the form
  $u(t,x)=U(x-C(t;\omega),\theta_t\omega)$ for some differentiable $C(t;\omega)$ and some  $U(x,\omega)$ which is measurable in $\omega$ and $U(-\infty,\theta_t\omega)=1$  and $U(\infty;\theta_{t}\omega)=0$ uniformly in t,
  it is equivalent
to prove  the existence of entire solutions of \eqref{moving coordinate -eq1} (with $c(t;\omega)=C^{'}(t;\omega)$) of the form  $v(t,x)=V(t,x;\omega)$ such that
\begin{equation}\label{random entire v sol def}
\begin{cases}
\omega\mapsto V(t,x;\omega),\quad \omega\mapsto C(t;\omega)\quad \text{are measurable},\\
V(t,x;\omega)=V(0,x;\theta_t\omega),\\
\lim_{x\to-\infty}V(t,x;\omega)=1, \quad \text{and} \quad \lim_{x\to\infty}V(t,x;\omega)=0 \quad \text{uniforlmy in } t.
\end{cases}
\end{equation}

For every  $0<\mu<\underline{\mu}^*:=\sqrt{\underline{a}}$, $x\in\R$, $t\in\R$ and $\omega\in\Omega$, let
$$
\phi^{\mu}(x)=e^{-\mu x} \,\, \text{and}\,\, C(t;\omega,\mu)=\int_0^tc(\t,\omega,\mu)d\t \,\, \text{with} \,\, c(t;\omega,\mu)=\frac{\mu^2+a(\theta_t\omega)}{\mu}.
$$
Then the function $ \phi^{\mu}$ satisfies
\begin{equation}\label{Linearized moving coordinate eq at 0}
\phi^{\mu}_t=\phi^{\mu}_{xx}+c(t;\omega,\mu)\phi^{\mu}_{x} +a(\theta_{t}\omega)\phi^{\mu},\quad x\in\R.
\end{equation}
Since $a(\omega)>0$ for every $\omega\in\Omega$, it follows from \eqref{Linearized moving coordinate eq at 0} that $v(t,x)= \phi^{\mu}(x)$ is a super-solution of \eqref{moving coordinate -eq1} with $c(t;\omega)=c(t;\omega,\mu)$.  We also note that $v(x,t)\equiv 1$ is a solution of \eqref{moving coordinate -eq1}. We  introduce the functions,
\begin{equation}\label{super-sol}
\phi_{+}^{\mu}(x):=\min\{1, \phi^{\mu}(x)\}, \quad \forall\ \, x\in\R
\end{equation}
and
\begin{equation}\label{G-def}
\mathcal{G}^{\omega,\mu}(v)(t,x):=v_t-v_{xx}-c(t;\omega,\mu)v_x-a(\theta_t\omega)v(1-v).
\end{equation}

The following Lemma will be frequently used to prove our main results.

\begin{lem}\label{sub-sol-lem}
 Suppose that {\bf (H1)} holds. Let $\omega\in\Omega_0$. Then for every $0<\mu<\tilde{\mu}<\min\{2\mu, \underline{\mu}^*\}$,  there exist  $\{t_k\}_{k\in\Z}$  with $t_k<t_{k+1}$ and $\lim_{k\to\pm\infty}t_k=\pm\infty$,   $A_{\omega}\in W^{1,\infty}_{\rm loc}(\R)\cap L^{\infty}(\R)$ with $A_\omega(\cdot)\in C^1((t_k,t_{k+1}))$ for $k\in\Z$,  and a positive real number $d_{\omega}$ such that for every $d\geq d_{\omega}$
   the function
  $$\phi^{\mu,d, A_\omega}(t,x):=e^{-\mu x}-de^{\big(\frac{\tilde{\mu}}{\mu}-1\big)A_{\omega}(t)-\tilde{\mu}x} $$
 satisfies
 $$
 \mathcal{G}^{\omega,\mu}(\phi^{\mu,d,A_\omega})(t,x)\leq 0\quad {\rm for}\quad t\in (t_k,t_{k+1}),\,\, x\ge \frac{\ln d}{\tilde \mu-\mu}+\frac{ A_\omega(t)}{\mu},\,\, k\in\Z.
 $$
 \end{lem}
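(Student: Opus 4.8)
The plan is to reduce the claim for the moving-coordinate operator $\mathcal{G}^{\omega,\mu}$ to the already-established Lemma \ref{prelim-lm4}, which produces a subsolution for the fixed-frame equation \eqref{b-eq1} with coefficient $b(\cdot)=a^\omega(\cdot)=a(\theta_\cdot\omega)$. The key observation is that the two settings are related by the change of variables $x\mapsto x-C(t;\omega,\mu)$: a function $w(t,x)$ solves (or subsolves) \eqref{b-eq1} if and only if $v(t,x):=w(t,x-C(t;\omega,\mu))$ solves (resp.\ subsolves) \eqref{moving coordinate -eq1}, since the traveling term $c(t;\omega,\mu)v_x$ exactly accounts for the $t$-derivative of the shift $C(t;\omega,\mu)=\int_0^t c(\tau;\omega,\mu)\,d\tau$. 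First I would verify that for $b(\cdot)=a^\omega(\cdot)$ one has $\underline{b}=\underline{a}(\omega)=\underline{a}$ for $\omega\in\Omega_0$ by \eqref{omega-0}, so $\underline{\mu}^*=\sqrt{\underline{b}}=\sqrt{\underline{a}}$ and the hypothesis range $0<\mu<\tilde\mu<\min\{2\mu,\underline{\mu}^*\}$ matches verbatim; Lemma \ref{prelim-lm4} then applies and supplies the partition $\{t_k\}$, the function $B_b\in W^{1,\infty}_{\rm loc}(\R)\cap L^\infty(\R)$, and the threshold $d_b$.

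Next I would simply set $A_\omega:=B_b$ and $d_\omega:=d_b$ (with the same $\{t_k\}$), and check that the two displayed subsolutions correspond under the shift. Recalling from \eqref{b-eq3} that $\phi^\mu(t,x;b)=e^{-\mu(x-C(t;b,\mu))}$, Lemma \ref{prelim-lm4}'s function is
$$
\phi^{\mu,d,B_b}(t,x)=e^{-\mu(x-C(t;b,\mu))}-d\,e^{(\frac{\tilde\mu}{\mu}-1)B_b(t)-\tilde\mu(x-C(t;b,\mu))}.
$$
Substituting $x\mapsto x+C(t;\omega,\mu)$ (noting $C(t;b,\mu)=C(t;\omega,\mu)$ for this choice of $b$) turns this into precisely $\phi^{\mu,d,A_\omega}(t,x)=e^{-\mu x}-d\,e^{(\frac{\tilde\mu}{\mu}-1)A_\omega(t)-\tilde\mu x}$, the function in the present statement. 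The spatial region in Lemma \ref{prelim-lm4}, namely $x\ge C(t;b,\mu)+\frac{\ln d}{\tilde\mu-\mu}+\frac{B_b(t)}{\mu}$, transforms under the same shift into $x\ge \frac{\ln d}{\tilde\mu-\mu}+\frac{A_\omega(t)}{\mu}$, matching the claimed domain exactly.

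It then remains to translate the differential inequality itself. Writing $\phi^{\mu,d,A_\omega}(t,x)=\phi^{\mu,d,B_b}(t,x+C(t;\omega,\mu))$, a direct computation using $\partial_t\big[w(t,x+C(t))\big]=w_t+c\,w_x$ shows that the fixed-frame inequality $\phi^{\mu,d,B_b}_t\le \phi^{\mu,d,B_b}_{xx}+b(t)\phi^{\mu,d,B_b}(1-\phi^{\mu,d,B_b})$ is equivalent to $\mathcal{G}^{\omega,\mu}(\phi^{\mu,d,A_\omega})(t,x)\le 0$, where the extra $c(t;\omega,\mu)\phi_x$ term in $\mathcal{G}^{\omega,\mu}$ (see \eqref{G-def}) is exactly the term generated by differentiating the shift. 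I expect the only genuine bookkeeping step to be this last verification that the chain-rule term produced by the moving frame coincides with the advection term built into $\mathcal{G}^{\omega,\mu}$; everything else is a relabeling. There is no real analytic obstacle here, since all the hard construction—the choice of the breakpoints $t_k$, the corrector $B_b$, and the largeness threshold $d_b$ guaranteeing the nonlinear subsolution inequality—has already been carried out in Lemma \ref{prelim-lm4}, and the present lemma is its statement in the traveling coordinate.
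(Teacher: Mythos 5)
Your proposal is correct and is exactly the paper's approach: the paper's entire proof of this lemma is the single line ``It follows from Lemma \ref{prelim-lm4},'' and your write-up supplies precisely the change-of-variables bookkeeping (taking $b(\cdot)=a(\theta_\cdot\omega)$, $A_\omega=B_b$, $d_\omega=d_b$, and shifting by $C(t;\omega,\mu)$) that this line leaves implicit. One small slip: in your opening paragraph the correspondence should read $v(t,x):=w(t,x+C(t;\omega,\mu))$ rather than $w(t,x-C(t;\omega,\mu))$, which is the substitution your own detailed computation in the later paragraphs correctly uses.
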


 \begin{proof} It follows from Lemma \ref{prelim-lm4}.
 \end{proof}

 Suppose that  {\bf (H1)} holds.
%\begin{description}
Let $\omega \in\Omega_0$, and $0<\mu<\tilde{\mu}<\min\{2\mu,\underline{\mu}^*\}$ be given. Let  $A_{\omega}$ and  $d_\omega$   be given by Lemma \ref{sub-sol-lem}. Let
\begin{equation}
\label{x-omega-eq}
x_\omega(t)=\frac{\ln d_{\omega}+\ln \tilde \mu-\ln \mu}{\tilde \mu-\mu}+ \frac{A_\omega(t)}{\mu}.
\end{equation}
Note that for any given $t\in\R$,
$$
\phi^{\mu,d_\omega,A_\omega}(t,x_\omega(t))=\sup_{x\in\R} \phi^{\mu,d_\omega,A_\omega}(t,x)=e^{-\mu\big(\frac{\ln d}{\tilde \mu-\mu}+\frac{A_\omega(t)}{\mu}\big)}e^{-\mu \frac{\ln \tilde \mu-\ln \mu}{\tilde\mu-\mu}}\big(1-\frac{\mu}{\tilde \mu}\big).
$$
 We introduce the following function
\begin{equation}\label{Lower -sol}\phi_{-}^{\mu}(t,x; \theta_{t_0}\omega)=\begin{cases}
{ \phi^{\mu,d_{\omega},A_\omega}( t+t_0,x)},\quad  \text{if}\ x\geq x_{\omega}(t+t_0), \\
\phi^{\mu,d_{\omega},A_\omega}( t+t_0,x_\omega( t+t_0)),\quad \ \text{if}\ x\leq x_{\omega}( t+t_0).
\end{cases}
\end{equation}
It is clear  that
$$
0<\phi_{-}^{\mu}(t,x; \theta_{t_0}\omega)< \phi_{+}^{\mu}(x)\leq 1, \forall\ t\in\R,\,\, x\in\R,\,\, t_0\in\R.
$$
and
\begin{equation}\label{decay-rate-eq1}
\lim_{x\to\infty}\sup_{t>0,t_0\in\R}|\frac{\phi_{-}^{\mu}(t,x;\theta_{t_0}\omega)}{\phi_+^\mu(x)}-1|=0.
\end{equation}
\begin{lem}
\label{lm00}
For every $0<\mu<\underline{\mu}_*$,
$$
\phi_{-}^{\mu}(t,x;\omega)\leq u(t,x+C(t;\omega,\mu);\phi_+^\mu,\theta)\le \phi_+^\mu(x)\quad \forall\,\, t>0,\,\, x\in\R.
$$
\end{lem}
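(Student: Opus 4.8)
The plan is to read the quantity $u(t,x+C(t;\omega,\mu);\phi_+^\mu,\omega)$ as the solution $\underline{v}$... let me instead reserve that name. Write $v(t,x)=u(t,x+C(t;\omega,\mu);\phi_+^\mu,\omega)$ for the solution of the moving-coordinate equation \eqref{moving coordinate -eq1} (with $c(t;\omega)=c(t;\omega,\mu)$) issuing from $v(0,\cdot)=\phi_+^\mu$. I would prove the two inequalities separately by the comparison principle: the upper bound against a supersolution and the lower bound against a genuine global subsolution.

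For the upper bound I would argue as in Lemma \ref{prelim-lm3}, transported to the moving frame. Since $a(\theta_t\omega)>0$, the identity \eqref{Linearized moving coordinate eq at 0} gives $\mathcal{G}^{\omega,\mu}(\phi^\mu)=a(\theta_t\omega)(\phi^\mu)^2\ge 0$, so $\phi^\mu(x)=e^{-\mu x}$ is a supersolution of \eqref{moving coordinate -eq1}; the constant $1$ is a solution, hence also a supersolution; and $\phi_+^\mu=\min\{1,\phi^\mu\}$, being the minimum of two supersolutions, is a supersolution. As $v(0,\cdot)=\phi_+^\mu$, the comparison principle yields $v(t,x)\le \phi_+^\mu(x)$ for all $t>0$ (this is exactly Lemma \ref{prelim-lm3} rewritten in the moving coordinate).

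For the lower bound, the difficulty is that the plateau part of $\phi_-^\mu$ in \eqref{Lower -sol} is in general \emph{not} a subsolution: at instants where $a(\theta_t\omega)$ is large the constant value $\psi(t):=\phi^{\mu,d_\omega,A_\omega}(t,x_\omega(t))$ violates $\psi_t\le a(\theta_t\omega)\psi(1-\psi)$. I would circumvent this by comparing against $w(t,x):=\max\{0,\phi^{\mu,d_\omega,A_\omega}(t,x)\}$ instead. The key observation is that $\phi^{\mu,d_\omega,A_\omega}(t,x)>0$ precisely for $x>\frac{\ln d_\omega}{\tilde\mu-\mu}+\frac{A_\omega(t)}{\mu}$, and this positivity region is contained in the set $x\ge \frac{\ln d_\omega}{\tilde\mu-\mu}+\frac{A_\omega(t)}{\mu}$ on which Lemma \ref{sub-sol-lem} guarantees $\mathcal{G}^{\omega,\mu}(\phi^{\mu,d_\omega,A_\omega})\le 0$. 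Thus $w$ is the maximum of the two subsolutions $0$ and $\phi^{\mu,d_\omega,A_\omega}$ (each valid where it is the active one), and hence a subsolution of \eqref{moving coordinate -eq1} on all of $\R$; the junction, where $\phi^{\mu,d_\omega,A_\omega}$ crosses $0$ from below, is a convex corner and is therefore harmless for subsolutions. Taking $d_\omega$ large enough that $\sup_t\psi(t)<1$ gives $w(0,\cdot)\le \phi_+^\mu=v(0,\cdot)$ (since $\phi^{\mu,d_\omega,A_\omega}(0,\cdot)\le e^{-\mu\cdot}$ and $\le 1$), so comparison yields $v\ge w$; in particular $v(t,x)\ge \phi^{\mu,d_\omega,A_\omega}(t,x)=\phi_-^\mu(t,x)$ for all $x\ge x_\omega(t)$.

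It remains to treat the plateau $x\le x_\omega(t)$, and here I would invoke monotonicity. Because $\phi_+^\mu$ is nonincreasing and the coefficients of \eqref{moving coordinate -eq1} do not depend on $x$, comparing $v(t,\cdot+h)$ with $v(t,\cdot)$ for $h>0$ shows that $v(t,\cdot)$ is nonincreasing for every $t$. Hence for $x\le x_\omega(t)$ one has $v(t,x)\ge v(t,x_\omega(t))\ge w(t,x_\omega(t))=\psi(t)=\phi_-^\mu(t,x)$, completing the lower bound. The main obstacle is exactly this plateau region: the direct comparison against $\phi_-^\mu$ is not available because $\phi_-^\mu$ is not a global subsolution, and the remedy is the combination of the genuine subsolution $\max\{0,\phi^{\mu,d_\omega,A_\omega}\}$ on the right with the monotonicity of $v$ on the left. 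A secondary technical point is that the drift $c(t;\omega,\mu)$ is unbounded in $t$ under {\bf (H1)}; this causes no trouble, since all functions involved take values in $[0,1]$ and the coefficients are locally bounded in $t$, so the comparison principle applies on each finite time interval and hence for all $t>0$.
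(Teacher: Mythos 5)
Your proof is correct and follows essentially the same route as the paper, whose entire proof is to cite Lemma \ref{prelim-lm3} (for the upper bound), Lemma \ref{sub-sol-lem} (for the lower bound), and the comparison principle. The details you supply --- passing to the global subsolution $\max\{0,\phi^{\mu,d_\omega,A_\omega}\}$, whose positivity set lies inside the region where Lemma \ref{sub-sol-lem} applies, and using the monotonicity of $u(t,\cdot+C(t;\omega,\mu);\phi_+^\mu,\omega)$ in $x$ to handle the plateau $x\le x_\omega(t)$ --- are exactly the steps the paper leaves implicit.
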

\begin{proof}
It follows from Lemmas \ref{prelim-lm3} and \ref{sub-sol-lem} and comparison principle for parabolic equations.
\end{proof}
\begin{lem}
\label{lm01}
For every $\omega\in\Omega_0$,
$\lim_{x\to -\infty} u(t,x+C(t,\theta_{t_0}\omega,\mu); \phi_+^\mu(0,\cdot;\theta_{t_0}\omega),\theta_{t_0}\omega)=1$ uniformly in $t>0$ and $t_0\in\R$, and
$\lim_{x\to \infty} u(t,x+C(t,\theta_{t_0}\omega,\mu);\phi_+^\mu(0,\cdot;\theta_{t_0}\omega),\theta_{t_0}\omega)=0$ uniformly in $t>0$ and $t_0\in\Omega$.
\end{lem}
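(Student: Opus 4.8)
The plan is to deduce this statement directly from its deterministic counterpart, Lemma \ref{prelim-lm6}, by freezing the sample path. First I would fix $\omega\in\Omega_0$ and set $b(t):=a(\theta_t\omega)=a^\omega(t)$. Because $a^\omega(\cdot)$ is locally H\"older continuous and strictly positive, $b\in C(\R,(0,\infty))$; and by the defining properties of $\Omega_0$ in \eqref{omega-0} together with {\bf (H1)}, the least and largest mean values of $b$ are $\underline b=\underline a$ and $\overline b=\overline a$, so that $0<\underline b\le\overline b<\infty$. Hence $b$ satisfies exactly the hypotheses under which Lemma \ref{prelim-lm6} was proved, and the admissible range $0<\mu<\underline{\mu}^*=\sqrt{\underline a}=\sqrt{\underline b}$ is the same in both settings (it is the standing restriction on $\mu$ used throughout this subsection).

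Next I would identify the random objects appearing in the statement with the deterministic ones built from $b$. Using the flow property $\theta_\tau\theta_{t_0}=\theta_{\tau+t_0}$ we have $a(\theta_\tau\theta_{t_0}\omega)=b(\tau+t_0)$, so \eqref{Main-eq} with sample point $\theta_{t_0}\omega$ is literally \eqref{b-eq1} with rate $b(\cdot+t_0)$; in particular the two solutions with the same initial datum coincide, $u(t,x;u_0,\theta_{t_0}\omega)=u(t,x;u_0,b(\cdot+t_0))$. From the integral formula \eqref{b-eq2} the drift terms agree, $C(t,\theta_{t_0}\omega,\mu)=C(t,b(\cdot+t_0),\mu)$, and since $C(0;\cdot,\mu)=0$ the initial profiles coincide as well, $\phi_+^\mu(0,\cdot;\theta_{t_0}\omega)=\min\{1,e^{-\mu\,\cdot}\}=\phi_+^\mu(0,\cdot;b(\cdot+t_0))$. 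With these identifications, the quantity whose limit we must compute is term-by-term equal to the quantity treated in Lemma \ref{prelim-lm6}.

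Finally I would apply Lemma \ref{prelim-lm6} to this $b$: its two displayed limits, uniform in $t>0$ and $t_0\in\R$, are exactly the two assertions of the present lemma, which completes the argument. There is essentially no analytic obstacle here; the real content sits in Lemma \ref{prelim-lm6}, and the present statement is merely its translation into the random framework. The only point requiring care is bookkeeping: one must observe that the time-shift $t_0$ is already absorbed by the $t_0$-uniformity of Lemma \ref{prelim-lm6}, which in turn relies on the shift invariance of the least and largest means — they are defined through $\liminf_{t-s\to\infty}$ and $\limsup_{t-s\to\infty}$ of interval averages and hence are unchanged when $b$ is replaced by $b(\cdot+t_0)$, so the hypotheses continue to hold for every shift.
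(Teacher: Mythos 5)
Your proposal is correct and is exactly the paper's argument: the paper's entire proof reads ``It follows from Lemma \ref{prelim-lm6},'' i.e.\ the deterministic lemma applied along the frozen sample path $b(t)=a(\theta_t\omega)$, which is precisely your reduction. Your write-up simply makes explicit the bookkeeping (identification of $u$, $C$, and $\phi_+^\mu$, and the verification that $0<\underline b\le\overline b<\infty$ on $\Omega_0$) that the paper leaves implicit.
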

\begin{proof}
It follows from Lemma \ref{prelim-lm6}
\end{proof}
Now, we present the proof of  Theorem \ref{Existence of random transition front}(1).

\medskip

\begin{proof}
First, let $0<\mu<\tilde{\mu}<\min\{2\mu,\underline{\mu}^*\}$ be fixed. It follows from Lemma \ref{lm00} that
$$
u(t,x+C(t;\omega,\mu);\phi_\mu^+,\omega)<u(\tilde{t},x+C(\tilde{t};\omega,\mu);\phi_\mu^+,\omega), \quad \forall\ x\in\R,\ t>\tilde{t}>0, \ \forall\ \omega\in\Omega_0.
$$
Hence the following limit exits
\begin{equation}\label{U-mu-def1}
U^{\mu}(x,\omega):=\lim_{t\to\infty}u(t,x+C(t;\theta_{-t}\omega,\mu);\phi_\mu^+,\theta_{-t}\omega), \quad \forall\ x\in\R, \ \omega\in\Omega_0.
\end{equation}
Furthermore, it follows from Lemma \ref{lm01} that  for every $\omega\in\Omega_0$ the limit in \eqref{U-mu-def1} is uniform in $x\in\R$ and
$$
\lim_{x\to-\infty}U^\mu(x,\theta_t\omega)=1 \quad \text{and}\ \lim_{x\to\infty}U^\mu(x,\theta_t\omega)=0, \ \text{uniformly in }\ t\in\R.
$$

Next, using the fact that $C(t+\tau;\theta_{-\t}\omega,\mu)=C(\t;\theta_{-\t}\omega)+C(t;\omega,\mu)$, we have
\begin{align*}
u(t,x+C(t,\omega,\mu);U^\mu(\cdot,\omega),\omega)=&\lim_{\t\to\infty}u(t,x+C(t,\omega,\mu);u(\t,x+C(\t;\theta_{-\t}\omega,\mu);\phi_\mu^+,\theta_{-\t}\omega),\omega)\cr
=&\lim_{\t\to\infty}u(t+\tau,x+C(t,\omega,\mu)+C(\t;\theta_{-\t}\omega,\mu);\phi_\mu^+,\theta_{-\t}\omega)\cr
=&\lim_{\t\to\infty}u(t+\tau,x+C(t+\t,\theta_{-(t+\t)}\theta_t\omega,\mu);\phi_\mu^+,\theta_{-(t+\t)}\theta_t\omega)\cr
=&U^\mu(x,\theta_t\omega).
\end{align*}
It follows from \eqref{decay-rate-eq1} and Lemma \ref{lm00} that
$$
\lim_{x\to\infty}\sup_{t\in\R}|\frac{U^\mu(x,\theta_t\omega)}{\phi_+^\mu(x)}-1|=0
$$  Furthermore, since the function $\R\ni x\mapsto \phi_+^\mu$ is non-increasing, then for every $\omega_0\in\Omega$ and every $t> 0$, we have that the function $ \R\ni x\mapsto u(t,x+C(t;\theta_{-t}\omega);\phi_+^\mu,\theta{-t}\omega)$ is decreasing, hence so is $U^\mu(\cdot,\omega)$. This completes the proof of the result.
\end{proof}

\subsection{Proof of Theorem \ref{Existence of random transition front}(2)}

In this subsection, we present the proof of Theorem \ref{Existence of random transition front}(2). We first  present some Lemmas. Let

\begin{equation}\label{u-0-def}
u_0^*(x)=\begin{cases}
         1 , \quad x\leq 0\cr
         0 , \quad x>0.
       \end{cases}
\end{equation}
\begin{lem}\label{minimal-front-lem1}
For every $\tilde u_0\in C^b_{\rm unif}(\R)$  satisfying that $0<\tilde{u}_{0}(x)<1$ and
$$
\lim_{x\to-\infty}\tilde{u}_0(x)=1 \quad \text{and}\quad \lim_{x\to\infty}\tilde{u}_0(x)=0,
$$
the following hold.

\begin{enumerate}
%\item $$\lim_{x\to-\infty}u(t,x;\tilde{u}_0,\omega)= 1 \quad \text{and} \lim_{x\to\infty}u(t,x;\tilde{u}_0,\omega)= 0,\quad \forall t>0. $$
\item[(1)] For every $t>0$ and $\omega\in\Omega$, there is $x(t,\omega)\in[-\infty, \infty]$ such that
$$
u(t,x;u_0^*,\theta_{-t}\omega)\begin{cases}>u(t,x;\tilde{u}_0,\theta_{-t}\omega), \ x<x(t,\omega)\cr
<u(t,x;\tilde{u}_0,\theta_{-t}\omega), \ x>x(t,\omega).
\end{cases}
$$
\item[(2)] For given $\omega\in\Omega$, suppose that
$$
U_i(x)=\lim_{n\to\infty}u(t_n,x;u_0^*(\cdot+x_{i,n}(\omega)),\theta_{-t_n}\omega),\quad \forall\ x\in\R \quad i=1,2
$$
exists for some $t_n\to\infty$ and $x_{i,n}(\omega)\in\R$ ($i=1,2$). If $U_1(0)=U_2(0)$ then $U_1(x)=U_2(x)$ for every $x\in\R$.
\end{enumerate}
\end{lem}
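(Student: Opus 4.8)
The plan is to prove (1) by the intersection-number (zero-number) theory for scalar parabolic equations and (2) by promoting the given limits to entire solutions and applying the strong maximum principle.

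For part (1), fix $t>0$ and $\omega$, and set $p(\tau,x)=u(\tau,x;u_0^*,\theta_{-t}\omega)$ and $q(\tau,x)=u(\tau,x;\tilde u_0,\theta_{-t}\omega)$. Both solve \eqref{Main-eq} with the same $x$-independent coefficient $a(\theta_{\tau-t}\omega)$, so the difference $w=p-q$ solves the linear parabolic equation $w_\tau=w_{xx}+a(\theta_{\tau-t}\omega)(1-p-q)\,w$, whose coefficient is bounded on $[0,t]\times\R$. At $\tau=0$ the datum $w(0,\cdot)=u_0^*-\tilde u_0$ is strictly positive on $(-\infty,0]$ and strictly negative on $(0,\infty)$, hence has exactly one sign change. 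I would then invoke the monotonicity of the zero number: the number $z(\tau)$ of sign changes of $x\mapsto w(\tau,x)$ is nonincreasing in $\tau$ and satisfies $\limsup_{\tau\to0^+}z(\tau)\le1$, so $z(\tau)\le1$ for all $\tau\in(0,t]$. Consequently $w(t,\cdot)$ changes sign at most once; I define $x(t,\omega)$ to be that sign-change point, and $x(t,\omega)=+\infty$ (resp. $-\infty$) if $w(t,\cdot)$ is everywhere positive (resp. negative). Since zeros can only be lost and never created, the ``$+$ then $-$'' pattern of the initial datum is preserved, which yields exactly the stated inequalities.

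The delicate point in part (1) is the justification of the zero-number theorem on the whole line with the discontinuous datum $u_0^*$ and with $w(\tau,\cdot)\to0$ at both ends (both $p$ and $q$ tend to $1$ as $x\to-\infty$ and to $0$ as $x\to+\infty$). This is handled by working for $\tau>0$, where parabolic smoothing makes $w$ smooth, and by controlling the sign of $w$ near $\pm\infty$ so that the finite sign-change count is meaningful and the pattern cannot flip to ``$-$ then $+$''; I expect this tail and sign bookkeeping to be the main obstacle.

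For part (2), I would first reduce to an ordering. After passing to a subsequence and relabeling I may assume $x_{1,n}\ge x_{2,n}$; since $u_0^*$ is nonincreasing, $u_0^*(\cdot+x_{1,n})\le u_0^*(\cdot+x_{2,n})$, so the comparison principle gives $u(\tau,\cdot;u_0^*(\cdot+x_{1,n}),\theta_{-t_n}\omega)\le u(\tau,\cdot;u_0^*(\cdot+x_{2,n}),\theta_{-t_n}\omega)$ for all $\tau\ge0$. Next I promote the limits to entire solutions: the functions $\tilde V_i^n(s,x):=u(t_n+s,x;u_0^*(\cdot+x_{i,n}),\theta_{-t_n}\omega)$ solve \eqref{Main-eq} with base point $\omega$ (as functions of $(s,x)$) and take values in $[0,1]$, so by interior parabolic estimates and a diagonal argument they converge, along a further subsequence, locally uniformly to entire solutions $V_i$ with $V_i(0,\cdot)=U_i$, $0\le V_i\le1$, and $V_1\le V_2$. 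Then $W:=V_2-V_1\ge0$ solves a linear parabolic equation with locally bounded coefficient and $W(0,0)=U_2(0)-U_1(0)=0$; the strong maximum principle forces $W\equiv0$ for $s\le0$, and forward uniqueness then gives $W\equiv0$ everywhere, whence $U_1\equiv U_2$. Here the only real work is the entire-solution construction and passing the ordering to the limit; the maximum-principle step is then routine.
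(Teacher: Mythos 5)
A preliminary remark: the paper does not actually prove this lemma; its ``proof'' consists of the citations \cite[Lemma 4.6(1)]{She4} and \cite[Lemma 4.5(2)]{She4}, so your proposal must be judged as a reconstruction of the arguments behind those references, and the tools you chose (Sturm-type sign-change counting for (1), entire-solution limits plus the strong maximum principle for (2)) are indeed the standard ones. Your part (2) is correct and essentially complete: passing to a subsequence and relabeling is harmless because the limits $U_i$ are taken along the full sequence $t_n$; the comparison principle propagates the ordering of the shifted step data; since $a(\theta_{t_n+s}\theta_{-t_n}\omega)=a(\theta_s\omega)$, interior parabolic estimates produce entire solutions $V_1\le V_2$ of the equation with base point $\omega$; and the strong maximum principle applied to $W=V_2-V_1\ge 0$, which solves $W_s=W_{xx}+a(\theta_s\omega)(1-V_1-V_2)W$ with locally bounded coefficient and vanishes at $(0,0)$, forces $W\equiv 0$ for $s\le 0$, hence $U_1\equiv U_2$ (your final forward-uniqueness step is superfluous).

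Part (1) is a different matter: the step you defer is not bookkeeping but the entire mathematical content of the statement, so as written there is a genuine gap. Writing $p=u(\cdot,\cdot;u_0^*,\theta_{-t}\omega)$, $q=u(\cdot,\cdot;\tilde u_0,\theta_{-t}\omega)$ and $w=p-q$, the difficulty is that the off-the-shelf zero-number theorems (Angenent's in particular) are stated on bounded intervals under the hypothesis that $w$ does not vanish on the lateral boundary, whereas here $w(\tau,\pm\infty)=0$ for every $\tau$ and, worse, $w(0,\cdot)$ is discontinuous; both the nonincrease of the sign-change count and the impossibility of the flip to ``$-$ then $+$'' are exactly what must be proved. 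The gap can be filled, but it takes real work, roughly: (i) a tail-competition estimate showing that there is $R^*$, independent of $\tau\in(0,t]$, with $w(\tau,x)>0$ for $x\le -R^*$ and $w(\tau,x)<0$ for $x\ge R^*$; this follows by comparing, through the linear equation for $w$ with bounded zero-order coefficient, the heat-kernel contribution of the positive mass of $w(0,\cdot)$ on, say, $[-2,-1]$ (of size $\min_{[-2,-1]}(1-\tilde u_0)>0$) against the Gaussian tail $\int_0^\infty G_\tau(x-y)\,dy$ coming from the negative mass, where $G_\tau(x)=(4\pi\tau)^{-1/2}e^{-x^2/(4\tau)}$; the point is that near $-\infty$ the positive mass is strictly closer and wins by a factor of order $e^{c|x|/\tau}$, and symmetrically near $+\infty$; (ii) an initial-layer argument, using the monotonicity of $p(\tau,\cdot)$ in $x$ and the uniform convergence $q(\tau,\cdot)\to\tilde u_0$ as $\tau\to 0^+$, showing that $w(\tau,\cdot)$ has exactly one sign change, with the correct pattern, on $[-R^*,R^*]$ for all small $\tau>0$; (iii) Angenent's theorem on the box $[-R^*,R^*]\times[\tau_1,t]$, where the lateral data now have fixed opposite signs, which both prevents the count from increasing and pins the pattern; and (iv) discreteness of zeros (or a strong maximum principle argument) to upgrade the weak inequalities to the strict ones the lemma asserts, a point your sketch also omits. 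None of this is carried out in your text: you assert that the tails can be controlled and then flag this as ``the main obstacle,'' which is an accurate self-diagnosis; without (i)--(iv), part (1) assumes the Sturmian statement it is supposed to establish, which is presumably why the paper simply cites \cite{She4}.
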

\begin{proof}
(1) See \cite[Lemma 4.6 (1)]{She4}.

(2) See \cite[Lemma 4.5 (2)]{She4}.
\end{proof}

\begin{proof}[Proof of Theorem \ref{Existence of random transition front}(2)] The proof of this result is divided into several steps.  Throughout this proof,   $u_0^*$ is given by \eqref{u-0-def}.

By comparison principle for parabolic equations we have that $ 0<u(t,x;u_0^*,\omega)<1$  for every  $t> 0$, $x\in\R$, $\omega\in\Omega$. Furthermore, for every $t>0$ fixed, the function $(x,\omega)\mapsto u(t,x;u_0^*,\omega)$ is measurable in $\omega\in\Omega$, and continously differentiable and strictly decreasing in $x\in\R$ with
$$
\lim_{x\to-\infty}u(t,x;u_0^*,\omega)=1 \quad \text{and}\quad \lim_{x\to\infty}u(t,x;u_0^*,\omega)=0, \quad \forall\,\, \omega\in\Omega.
$$
Hence, by Lemma \ref{prelim-lm5}, for every $t>0$ and $\omega\in\Omega$, there is  $x(t,\omega)\in\R $ measurable in $\omega\in\Omega$ satisfying
\begin{equation}\label{g-eq0}
u(t,x(t,\omega);u_{0}^*,\omega)=\frac{1}{2}.
\end{equation}
Next, consider the functions
$$
\tilde{u}(t,x;\omega)=u(t,x+x(t,\theta_{-t}\omega);u_{0}^*,\theta_{-t}\omega),\quad x\in\R.
$$

%By uniqueness of solution of \eqref{Main-eq}, we have that $$ \tilde{u}(t,x;\omega)=u(t,x;u_{0}(\cdot+x(t,\omega)),\theta_{-t}\omega),\quad x\in\R.$$ Note that for each $t\geq 0$, the function $\R\times\Omega\ni (x,s)\mapsto u(t,x;u_0,\theta_{-t}\omega)$ satisfies the assumptions of Lemma \ref{measurable-inverse-process}. Hence, it follows from Lemma \ref{measurable-inverse-process} that the function $$ \Omega\ni \omega\mapsto x(t;\omega)$$is measurable. Which implies that $\omega\mapsto\tilde{u}(t,x;\cdot)$ is a measurable function for every $x\in\R$.

\noindent{\bf Step 1.} We claim that for any $\omega\in\Omega$ and $0<t_1<t_2$,
\begin{equation}\label{g-e00}
\tilde{u}(t_1,x,\omega)\begin{cases}
> \tilde{u}(t_2,x;\omega)\ \forall x< 0\cr
< \tilde{u}(t_2,x;\omega)\ \forall x>0.
\end{cases}
\end{equation}

Observe that
$$
\tilde{u}(t_1,x;\omega)=u(t_1,\cdot;u_0^*(\cdot+x(t_1,\theta_{-t_1}\omega)),\theta_{-t_1}\omega)
$$
and
$$
\tilde{u}(t_2,x;\omega)=u(t_1,x;u(t_2-t_1,\cdot;u_0^*(\cdot+x(t_2,\theta_{-t_2}\omega)),\theta_{-t_2}\omega)),\theta_{-t_1}\omega)
$$
Thus  \eqref{g-e00} follows from \eqref{g-eq0} and Lemma \ref{minimal-front-lem1}(1).

Hence, $U(x;\omega)=\lim_{t\to\infty}\tilde{u}(t,x;\omega)$ exists. Moreover, it follows from estimate for parabolic equations that $U(\cdot;\omega)\in C^{b}_{\rm unif}(\R)$ and $\lim_{t\to\infty}\tilde{u}(t,x;\omega)=U(x;\omega)$ locally uniform in $x\in\R$.

\medskip

\noindent{\bf Step 2.} We show that,  { for any $\omega\in\Omega_0$},
\begin{equation}\label{boundary-estimate-of U}
\lim_{x\to-\infty}U(x,\theta_t\omega)=1 \quad \text{and} \quad \lim_{x\to\infty}U(x,\theta_t\omega)=0\quad \text{uniformly in } t\in\R.
\end{equation}
and
\begin{equation}\label{uniform-conv-to-U}
U(x,\omega)=\lim_{t\to\infty} u(t,x+x(t,\theta_{-t}\omega);u_0^*,\theta_{-t}\omega) \quad \text{uniformly in } x\in\R.
\end{equation}

\smallskip

For every $0<\mu<\sqrt{\underline{a}}$, let $U^{\mu}(x,\theta_{t}\omega)=u(t,x+C(t;\omega,\mu);U^\mu(\cdot,\omega),\omega)$, with $C(t;\omega,\mu)=\int_{0}^t\frac{\mu^2+a(\theta_\tau\omega)}{\mu}d\tau $, denote the transition front given by Theorem \ref{Existence of random transition front} (1).  For { $t>0$} and {$\omega\in\Omega_0$},  let $X_{\mu}(t,\omega)\in\R$ be such that
 \begin{equation}\label{mu-speed-def}
u(t,X_{\mu}(t,\omega);{U^{\mu}(\cdot,\theta_{-t}\omega),\theta_{-t}\omega})=U^{\mu}(X_{\mu}(t,\omega)-C(t;{ \theta_{-t}\omega},\mu),{ \omega})=\frac{1}{2}.
\end{equation}
Observe, as above, that $\omega\mapsto X_\mu(t,\omega)$  is mearable in $\omega\in{\Omega_0}$. {For any $\omega\in\Omega_0$}, since $U^{\mu}(-\infty,\theta_t\omega)=1$ and $U^{\mu}(\infty,\theta_t\omega)=0$ uniformly in $t\in\R$ and $U^\mu(x,\omega)$ is strictly decreasing in $x$, there are $M_\mu(\omega)>0$ and  $x_\mu(\omega)$ such that
\begin{equation}\label{g-e2}
\begin{cases}
X_{\mu}(t,\omega)-C(t;{\theta_{-t}\omega},\mu)=x_\mu(\omega)\quad \forall\,\, t\in\R\cr
|x_\mu(\theta_t\omega)|\le M_\mu(\omega)\quad \forall\,\, t\in\R.
\end{cases}
\end{equation}

It follows from \eqref{g-eq0}, \eqref{mu-speed-def}, Lemma \ref{minimal-front-lem1}(1) and the fact that $0<U^\mu(\cdot,\theta_{-t}\omega)<1$ that for every $\omega\in\Omega_0$ and every $t>0$,
\begin{equation*}
u(t,x+x(t,\theta_{-t}\omega);u_0^*,\theta_{-t}\omega)\begin{cases}
> u(t,x+X_{\mu}(t,\omega),U^{\mu}(\cdot,\theta_{-t}\omega),\theta_{-t}\omega),\quad  x<0\cr
< u(t,x+X_{\mu}(t,\omega),U^{\mu}(\cdot,\theta_{-t}\omega),\theta_{-t}\omega),\quad  x>0.
\end{cases}
\end{equation*}
That is
\begin{equation}\label{g-e2-2}
u(t,x+x(t,\theta_{-t}\omega);u_0^*,\theta_{-t}\omega)\begin{cases}
> U^{\mu}(x+{ x_\mu(\omega)},\omega),\quad  x<0\cr
< U^{\mu}(x+{ x_\mu(\omega)},\omega),\quad  x>0.
\end{cases}
\end{equation}
This  yields that
$$
U(x,\omega)\begin{cases}
\geq U^{\mu}(x+x_{\mu}(\omega),\omega),\quad  x\leq 0\cr
\leq U^{\mu}(x+x_{\mu}(\omega),\omega),\quad  x\geq 0,
\end{cases}
$$
and then that,
for any $t\in\R$,
\begin{equation}\label{g-e2-2-1}
U(x,\theta_t\omega)\begin{cases}
\geq U^{\mu}(x+x_{\mu}(\theta_t\omega),\theta_t\omega),\quad  x\leq 0\cr
\leq U^{\mu}(x+x_{\mu}(\theta_t\omega),\theta_t\omega),\quad  x\geq 0.
\end{cases}
\end{equation}
 {\eqref{boundary-estimate-of U} and  \eqref{uniform-conv-to-U} then follow from \eqref{g-e2}, \eqref{g-e2-2},  and \eqref{g-e2-2-1}.}

\smallskip
\smallskip

\noindent{\bf Step 3.} We claim that there is $C(t,\omega)$ measurable in $\omega\in\Omega_0$ satisfying
\begin{equation}
u(t,x+C(t;\omega);U(\cdot,\omega),\omega)\equiv U(x,\theta_t\omega)\quad { \forall\,\, t\in\R,\,\, x\in\R,\,\, \omega\in\Omega_0}.
\end{equation}

In fact, similar arguments leading to the existence of $x(t,\omega)$ above  show that  for every $t$ there is $C(t,\omega)\in\R$ measurable in $\omega\in\Omega_0$ such that { for any $\omega\in\Omega_0$},
\begin{equation}\label{minimal-wave-speed-eq1}
u(t,C(t;\omega);U(\cdot,\omega),\omega)=\frac{1}{2}.
\end{equation}
Observe that for $\omega\in\Omega_0$,
\begin{align*}
u(t,x+C(t;\omega);U(\cdot,\omega),\omega)=& \lim_{s\to\infty}u(t,x+C(t;\omega);u(s,\cdot;u_0^*(\cdot+x(s;\theta_{-s}\omega)),\theta_{-s}\omega),\omega)\cr
=&\lim_{s\to\infty}u(t+s,x;u_0^*(\cdot+x(s;\theta_{-s}\omega)+C(t;\omega)),\theta_{-(s+t)}\theta_t\omega)
\end{align*}
and
\begin{align*}
U(x,\theta_t\omega)=&\lim_{\t\to\infty}u(\t,x;u_0^*(\cdot+x(\t;\theta_{-\t+t}\omega)),\theta_{-\tau}\theta_{t}\omega)\cr
=&\lim_{s\to\infty}u(s+t,x;u_0^*(\cdot+x(s+t;\theta_{-s}\omega)),\theta_{-(s+t)}\theta_t\omega).
\end{align*}
Since,
$$u(t,C(t;\omega);U(\cdot,\omega),\omega)=\frac{1}{2}\quad \text{and}\quad U(0,\theta_t\omega)=\frac{1}{2}, $$
we conclude from Lemma \ref{minimal-front-lem1}(2) that for $\omega\in\Omega_0$,
$$
u(t,x+C(t;\omega);U(\cdot,\omega),\omega)\equiv U(x,\theta_t\omega).
$$

\noindent{\bf Step 4.} In this step, we shall show that {for any $\omega\in\Omega_0$},
\begin{equation}\label{g-e2-2-3}
\liminf_{t-s\to\infty}\frac{C(t;\omega)-C(s;\omega)}{t-s}\leq  2\sqrt{\underline{a}},
\end{equation}
where $C(t;\omega)$ is given by \eqref{minimal-wave-speed-eq1}.

\smallskip

 Indeed, fix $\omega\in\Omega_0$,  $0<\mu<\sqrt{\underline{a}}$, and  $s\in\R$.  Note that the function $(x,t)\mapsto \tilde{U}(t,x;\omega):= \frac{1}{2} U(x-C(t;\omega),\theta_{t}\omega) $ satisfies
\begin{equation*}
\partial_t\tilde{U}(t,x;\omega)\leq \partial_{xx}\tilde{U}(t,x;\omega)+a(\theta_t\omega)(1-\tilde{U}(t,x;\omega))\tilde{U}(t,x;\omega),\quad t,x\in\R,
\end{equation*}
 and that, by \eqref{g-e2-2-1},
\begin{equation*}
\tilde{U}(s,x;\omega)=\frac{1}{2} U(x-C(s;\omega),\theta_{s}\omega) \leq U^{\mu}(x-C(s;\omega)+{ x_{\mu}(\theta_s\omega)},\theta_{s}\omega)\quad \forall x,s\in\R.
\end{equation*}
Hence, by comparison principle for parabolic equations,
\begin{align*}
\tilde{U}(t,x;\omega)\leq& u(t-s,x;U^\mu(\cdot-C(s;\omega)+x_\mu(\theta_s\omega),\theta_s\omega),\theta_s\omega)\cr
=&U^\mu(x-C(s;\omega)+x_\mu(\theta_s\omega)-C(t-s;\theta_{t}\omega,\mu),\theta_{t}\omega), \quad \forall\ x\in\R, \, t\geq s.
\end{align*}
In particular, taking $x=C(t,\omega)$, we obtain
$$
\frac{1}{4}\leq U^\mu(C(t;\omega)-C(s;\omega)+x_\mu(\theta_s\omega)-C(t-s;\theta_{t}\omega,\mu),\theta_{t}\omega), \quad \forall\ x\in\R, \, t\geq s.
$$
%Observe that the function $(x,t)\mapsto \tilde{U}^{\mu}(t,x;\omega)=u(t,x-C(s,\omega)+{ x_{\mu}(\theta_s\omega)}+C(s;\omega,\mu);U^{\mu}(\cdot,\omega),\omega)$ satisfies
%\begin{equation}\label{g-e2-2-6}\partial_t \tilde{U}^\mu(t,x;\omega)=\partial_{xx}\tilde{U}^\mu(t,x;\omega) +a(\theta_t\omega)(1-\tilde{U}^\mu(t,x;\omega))\tilde{U}^\mu(t,x;\omega),\quad x\in\R, \ t>s\end{equation}and\begin{equation}\label{g-e2-2-7}\tilde{U}^\mu(s,x;\omega)=U^{\mu}(x-C(s,\omega)+{ x_{\mu}(\theta_s\omega)},\theta_{s}\omega), \quad \forall x\in\R.\end{equation}Hence, it follows from \eqref{g-e2-2-4}-\eqref{g-e2-2-7} and comparison principle for parabolic equations that$$\tilde{U}_\mu(t,x;\omega)\leq \tilde{U}^\mu(t,x;\omega), \quad \forall x\in\R, \ \forall\ t\geq s.$$Hence for $x=C(t,\omega)$, it follows from last inequality, \eqref{minimal-wave-speed-eq1} and the definitions of $\tilde{U}^\mu$ and $\tilde{U}_{\mu}$ that$$ \frac{1}{4}\leq U^{\mu}(C(t,\omega)-C(s,\omega)+{x_{\mu}(\theta_s\omega)}+C(s;\omega,\mu)-C(t,\omega;\mu),\theta_{t}\omega) \quad \forall t\geq s.$$
Which implies that there is $N_{\mu}(\omega)$ such that
\begin{equation}\label{upper-bound-eq-for-ave-mean}
C(t;\omega)-C(s;\omega)\leq C(t-s;\theta_{t}\omega,\mu),\theta_{t}\omega)+  { x_{\mu}(\theta_s\omega)}+ N_{\mu}(\omega),\quad \forall t>s.
\end{equation}
This together with \eqref{g-e2} and the fact that $C(t-s;\theta_{t}\omega,\mu),\theta_{t}\omega)=C(t;\omega,\mu)-C(s;\omega,\mu)$ (see \eqref{average-speed-eq1}) yield,
$$
\liminf_{t-s\to\infty}\frac{C(t;\omega)-C(s;\omega)}{t-s}\leq \liminf_{t-s\to\infty}\frac{C(t;\omega,\mu)-C(s;\omega,\mu)}{t-s}=\frac{\mu^2+\sqrt{\underline{a}}}{\mu},\quad \forall\ 0<\mu<\sqrt{\underline{a}}.
$$
Which completes the proof of \eqref{g-e2-2-3}.

\medskip

\noindent{\bf Step 5}. The proof of $$
\liminf_{t-s\to\infty}\frac{C(t;\omega)-C(s;\omega)}{t-s}\geq 2\sqrt{\underline{a}}.
$$
follows from Theorem \ref{Existence of random transition front} (3), whose proof will be given in the next subsection.
\end{proof}

\subsection{Proof of Theorem \ref{Existence of random transition front}(3)}

In this subsection, we present the proof of Theorem \ref{Existence of random transition front}(3).

\begin{proof}[Proof of Theorem \ref{Existence of random transition front}(3)] Suppose that $u(t,x;\omega)=U(x-C(t;\omega);\omega)$ is a random transition wave of \eqref{Main-eq} connecting $u\equiv 0$ and  $u\equiv 1$ satisfying \eqref{tail-eq} and $0<\varepsilon\ll 1$.
$$ \inf_{x\le z}\inf_{s\in\R}U(x,\theta_s\omega)>0, \quad \forall\ z\in\R$$
Thus, we can chose $u_0(\cdot,\omega)\in X^+_c$ such that
$$
u_0(x,\omega)\leq U(x,\theta_s\omega),\quad \forall \ s\in\R.
$$
Hence, it follows from Theorem \ref{spreading-speeds-tm} and comparison principle for parabolic equations,
\begin{align*}
1=&\lim_{t\to\infty}\inf_{s\in\R}u(t,(2\sqrt{\underline{a}}-\varepsilon)t;u_0(\cdot;\omega),\theta_s\omega)\cr
\leq& \liminf_{t\to\infty}\inf_{s\in\R}u(t,(2\sqrt{\underline{a}}-\varepsilon)t;U(\cdot,\theta_s\omega),\theta_s\omega)\cr
=&\liminf_{t\to\infty}\inf_{s\in\R}U((2\sqrt{\underline{a}}-\varepsilon)t-C(t;\theta_{s}\omega),\theta_{s+t}\omega)
\end{align*}
This together with the fact that $C(t+s;\omega)-C(s;\omega)=C(t;\theta_s\omega)$ (see \eqref{average-speed-eq1}) yield that there is a constant $K(\omega)$ such that
$$
(2\sqrt{\underline{a}}-\varepsilon)t\leq C(t+s;\omega)-C(s;\omega)+K(\omega),\quad  \forall \ t>0, \ s\in\R,
$$
Thus,
$$  2\sqrt{\underline{a}}-\varepsilon\le \liminf_{t\to\infty}\inf_{s\in\R}\frac{C(t+s;\omega)-C(s;\omega)}{t}= \underline{c}.$$
Letting $\varepsilon\to0$, the result follows.
\end{proof}

\section{Stability of random transition fronts}

In this section, we study the stability of random transition fronts of \eqref{Main-eq} established in the previous section.
We also study the existence and stability of random transition fronts of \eqref{real-noise-eq}.

\subsection{Stability of random transition fronts of \eqref{Main-eq}}

In this subsection, we study the stability of random transition fronts of \eqref{Main-eq}.  The following is the main theorem of this section.

\begin{tm}\label{Stability tm}
Assume that {\bf (H1)} hold. Then given $\mu\in(0, \underline{\mu}^*)$, the random wave solution $u(t,x)=U(x-C(t;\omega,\mu),\theta_t\omega)$ with $\lim_{x\to\infty}\frac{U(x;\theta_{t}\omega)}{e^{-\mu x}}=1$ and $C(t;\omega,\mu)=\int_0^tc(s;\omega,\mu)ds$  ($c(t;\omega,\mu)=\frac{\mu^2+a(\theta_t\omega)}{\mu}$) is asymptotically stable, that is, for any $\omega\in\Omega_0$ and
 $u_0\in C^b_{\rm unif}(\R)$ satisfying that
\begin{equation}\label{Stability - eq1 }
\inf_{x\leq x_0}u_0(x)>0 \quad \forall\, x_0\in\R, \quad \lim_{x\to\infty}\frac{u_0(x)}{U(x-C(0;\omega,\mu),\omega)}=1,
\end{equation}
there holds
$$
\lim_{t\to\infty}\Big\| \frac{u(t,\cdot;u_0,\omega)}{U(\cdot-C(t;\omega,\mu),\theta_t\omega)}-1 \Big\|_{\infty}=0.
$$
\end{tm}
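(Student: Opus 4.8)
The plan is to establish the asymptotic stability by a two–sided squeezing argument carried out in the moving frame. First I would set $v(t,x)=u(t,x+C(t;\omega,\mu);u_0,\omega)$, which solves the moving–coordinate equation \eqref{moving coordinate -eq1} with $c(t;\omega)=c(t;\omega,\mu)$, and in which the wave becomes the bounded entire solution $V(t,x):=U(x,\theta_t\omega)$, so that $\mathcal{G}^{\omega,\mu}(V)=0$ (see \eqref{G-def}). The conclusion is then equivalent to $\sup_{x\in\R}\big|v(t,x)/V(t,x)-1\big|\to0$, and I would prove it by showing separately that $\limsup_{t\to\infty}\sup_x v/V\le1$ and $\liminf_{t\to\infty}\inf_x v/V\ge1$. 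The two halves of the hypothesis \eqref{Stability - eq1 } feed these two bounds: the exact tail matching $u_0(x)/U(x-C(0;\omega,\mu),\omega)\to1$ pins down the behavior as $x\to+\infty$, while $\inf_{x\le x_0}u_0>0$ drives the solution up to $1$ on the left.

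A clean structural fact I would use throughout is the KPP concavity: since $\mathcal{G}^{\omega,\mu}(V)=0$, a direct computation gives $\mathcal{G}^{\omega,\mu}((1\pm\epsilon)V)=\pm\,a(\theta_t\omega)\,\epsilon(1\pm\epsilon)V^2$, so $(1+\epsilon)V$ is a super-solution and $(1-\epsilon)V$ a sub-solution for every $\epsilon\in(0,1)$; moreover, because the coefficients of \eqref{moving coordinate -eq1} do not depend on $x$, any spatial translate of $V$ enjoys the same property. For the upper bound I would first dominate $v(0,\cdot)$ on the whole line by a translated super-solution $(1+\epsilon)V(t,x-s_0)$, which already yields global boundedness; the far-left ratio is then forced to $1$ by the stability of the constant state $u\equiv1$ (Theorem \ref{stability of const equi solu thm}), while on the profile/tail region I would run a moving-boundary comparison against the unshifted $(1+\epsilon)V$, whose lateral boundary value is controlled by the just-established bulk estimate and whose initial domination on the tail uses the matching in \eqref{Stability - eq1 }. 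Letting $\epsilon\to0$ gives $\limsup_t\sup_x v/V\le1$.

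For the lower bound I would symmetrically combine three ingredients: the genuine tail sub-solution $\phi^\mu_-$ of \eqref{Lower -sol}, supplied by Lemma \ref{sub-sol-lem}, which propagates the correct decay rate $\mu$ from below and keeps $\phi^\mu_-/V\to1$ as $x\to+\infty$; the spreading estimate of Theorem \ref{spreading-speeds-tm} together with Theorem \ref{stability of const equi solu thm}, which guarantee that after a finite time the left part of $v$ exceeds $(1-\epsilon)$ wherever $V$ is bounded away from $0$; and a product sub-solution $(1-\delta(t))V(t,x+\eta(t))$ with $\delta(t),\eta(t)\to0$ for the intermediate region. After a preliminary time shift to install the left bound, the comparison principle yields $v\ge(1-\epsilon)V$ up to vanishing corrections, hence $\liminf_t\inf_x v/V\ge1$. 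Matching the tail and bulk estimates on their overlap makes the ratio estimate uniform in $x$, and combining the two bounds completes the proof.

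The main obstacle is that {\bf (H1)} gives only a positive \emph{least mean} $\underline a$ and no pointwise lower bound on $a(\theta_t\omega)$. At the unavoidable times when $a(\theta_t\omega)$ is small the speed $c(t;\omega,\mu)$ nearly stalls, and constant-rate barriers such as $(1\pm\delta_0e^{-\kappa t})V$ or steeper exponentials $e^{-\tilde\mu x}$ cease to contract. I expect the crux to be twofold: first, tying the decay and shift rates to $\lambda(t)=\vartheta\,a(\theta_t\omega)$ (with $\vartheta\in(0,1)$), so that $\int_0^t\lambda(s)\,ds\to\infty$ --- guaranteed since the mean $\hat a\ge\underline a>0$ --- forces $\delta(t),\eta(t)\to0$ despite the stalls; and second, using the fluctuation-compensated barriers of Lemma \ref{sub-sol-lem}, whose correction carries the factor $e^{(\tilde\mu/\mu-1)A_\omega(t)}$ with $A_\omega$ built through Lemma \ref{prelim-lm2} precisely to absorb the oscillations of $\int_0^t a(\theta_s\omega)\,ds$. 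A further technical point is establishing the uniform steepness $\inf_{t}\inf_{\{U(\cdot,\theta_t\omega)\le\rho\}}(-U_x/U)>0$ for some $\rho\in(0,1)$, which is what upgrades the separate tail and bulk estimates to a bound uniform in $x$.
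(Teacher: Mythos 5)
Your preparatory material is sound and in fact coincides with the paper's first step: the paper's Lemma \ref{stabolity-lm2} proves exactly your tail estimate, by squeezing the solution between $(1\pm\varepsilon)e^{-\mu x}\mp d\,e^{A_\omega(t)-\tilde\mu x}$ via Lemma \ref{sub-sol-lem}, and your observation that all decay rates must be tied to $a(\theta_t\omega)$ itself, so that $\int_0^\infty a(\theta_s\omega)\,ds=\infty$ does the work, is indeed the heart of the matter. However, the core of your plan has a genuine gap, in two places. The far-left step fails as stated: Theorem \ref{stability of const equi solu thm} requires $\inf_{x\in\R}u_0(x)>0$, which is false here since $u_0(x)\to0$ as $x\to+\infty$, and you cannot substitute the spreading results, because the moving frame advances with mean speed $\frac{\mu^2+\hat a}{\mu}>2\sqrt{\hat a}=\hat c^*$, so the region where Theorem \ref{spreading-speeds-tm-0} forces $u\approx1$ recedes to $-\infty$ in the moving coordinates; on a fixed left half-line $\{x\le x_0\}$ of the moving frame (behind the front but ahead of the spreading region) convergence of the ratio to $1$ is essentially the statement being proven and cannot be used as an input.

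The second, more structural, problem is that your contraction mechanism is asserted rather than proven. The product barrier $(1-\delta(t))V(t,x+\eta(t))$ cannot satisfy the sub-solution inequality uniformly in $x$: the concavity gain is of order $a\,\delta V^2$ while the shift contributes $\eta' V_x$ with $|V_x|\sim \mu V\gg V^2$ in the tail, so the inequality fails exactly where the front is pulled; and even granting differential inequalities of the form $\delta'\le -\vartheta a\delta$, $|\eta'|\le Ca\delta$ on the intermediate region, one only gets $\eta(t)$ convergent with $|\eta_\infty|\lesssim\delta(0)$, not $\eta_\infty=0$. Since the theorem concerns the ratio, which is phase-sensitive in the tail (a residual shift inflates it by $e^{\mu\eta_\infty}$), a nonvanishing limit shift is fatal, and nothing in your plan rules it out. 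The paper's proof avoids both issues by using no shifts and never separating a left-stabilization step: it defines the part-metric type quantity $\alpha(t)=\inf\{\alpha\ge1:\ \alpha^{-1}\le u/U\le\alpha\ \text{on }\R\}$, shows $\alpha(\cdot)$ is non-increasing by the same KPP concavity you invoke, and then proves the quantitative contraction $\alpha((k+1)T)\le e^{-\delta\int_{kT}^{(k+1)T}a(\theta_s\omega)\,ds}\,\alpha(kT)$ by comparing the exponentially weighted solution $e^{\delta\int_{kT}^{t+kT}a(\theta_s\omega)\,ds}\,u$ against $\alpha_k U$ on the region $\{x\le x_\alpha\}$ where $U\ge m_\alpha>0$, with your tail estimate supplying the lateral boundary control at $x=x_\alpha$; iterating and using $\int_0^\infty a(\theta_s\omega)\,ds=\infty$ yields $\alpha_\infty\le0$, contradicting $\alpha_\infty\ge1$. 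If you wish to salvage your two-sided scheme, the repair is precisely to abandon the spatial shifts and run the squeezing on the ratio, which reproduces the paper's argument.
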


To prove the above theorem, we first prove some lemma.

\begin{lem}\label{stabolity-lm2} Let $u_0\in C^b_{\rm unif}(\R)$ satisfy \eqref{Stability - eq1 }. Then for any $\omega\in\Omega_0$,  there holds
\begin{equation}
 \lim_{x\to\infty}\frac{u(t,x+C(t;\omega,\mu);u_0,\omega)}{e^{-\mu x}}=1 \quad \text{uniformly in } t\geq 0.
\end{equation}
where $C(t;\omega,\mu)=\int_0^ t c(s;\omega,\mu)ds$ ($c(t;\omega,\mu)=\frac{\mu^2+a(\theta_t\omega)}{\mu}$) and $\mu$ is given by Theorem \ref{Stability tm}.
\end{lem}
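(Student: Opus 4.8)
The plan is to move to the wave's frame and to the exponentially weighted unknown, turning the statement into a comparison problem for a transport--diffusion equation whose drift has strictly positive mean. Set $v(t,x)=u(t,x+C(t;\omega,\mu);u_0,\omega)$, so that $v$ solves \eqref{moving coordinate -eq1} with $c(t;\omega)=c(t;\omega,\mu)$ and $v(0,\cdot)=u_0$ (as $C(0;\omega,\mu)=0$); the target is $\lim_{x\to\infty}\sup_{t\ge0}\big|v(t,x)e^{\mu x}-1\big|=0$. Since \eqref{Exist-tm-eq2} gives $U(x,\omega)e^{\mu x}\to1$, hypothesis \eqref{Stability - eq1 } yields $u_0(x)e^{\mu x}\to1$ as $x\to\infty$. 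Put $w(t,x)=v(t,x)e^{\mu x}$; using $\mu\,c(t;\omega,\mu)=\mu^2+a(\theta_t\omega)$, a direct computation gives $w_t=w_{xx}+b(t)w_x-a(\theta_t\omega)e^{-\mu x}w^2$ with $b(t)=(a(\theta_t\omega)-\mu^2)/\mu$, while $w(0,\cdot)=u_0e^{\mu\cdot}$ is bounded and positive, tends to $1$ at $+\infty$ and to $0$ at $-\infty$. The decisive structural fact, valid for $\omega\in\Omega_0$, is that the drift has positive mean, $\frac1t\int_0^t b(s)\,ds\to(\hat a-\mu^2)/\mu>0$, since $\mu<\sqrt{\underline{a}}\le\sqrt{\hat a}$ by \eqref{omega-0}.

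For the upper bound, the nonlinearity is a sink, so dropping it only raises the solution: if $\tilde w$ solves the linear equation $\tilde w_t=\tilde w_{xx}+b(t)\tilde w_x$ with $\tilde w(0,\cdot)=w(0,\cdot)$, the maximum principle gives $w\le\tilde w$. I would then remove the drift by writing $\tilde w(t,x)=W(t,x+B(t))$ with $B(t)=\int_0^t b$, so that $W$ solves the heat equation with $W(0,\cdot)=w(0,\cdot)=:g$; this change of variables is exact and needs no bound on $a$. Representing $W$ by the Gaussian kernel, $\tilde w(t,x)=\E\,g\big(x+B(t)+\sqrt{2t}\,\mathcal N\big)$ with $\mathcal N$ standard normal. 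Because $B(t)\ge\kappa' t$ for large $t$ (some $\kappa'>0$) and $B(t)\ge-B_0$ for all $t\ge0$, a routine Gaussian estimate shows that the heat mass left of any fixed level tends to $0$ uniformly in $t\ge0$ as $x\to\infty$; since $g\to1$ at $+\infty$, this yields $\sup_{t\ge0}|\tilde w(t,x)-1|\to0$, hence $\limsup_{x\to\infty}\sup_{t\ge0}w(t,x)\le1$.

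For the lower bound I would use the subsolutions of Lemma \ref{prelim-lm4} (Lemma \ref{sub-sol-lem}): in the weighted variable $\phi^{\mu,d,A_\omega}(t,x)e^{\mu x}=1-d\,e^{(\tilde\mu/\mu-1)A_\omega(t)}e^{-(\tilde\mu-\mu)x}$ is a subsolution of the $w$-equation whose limit at $+\infty$ is $1$, \emph{uniformly} in $t$ because $A_\omega\in L^\infty(\R)$. Fixing $\delta>0$, scaling a subsolution down by $1-2\delta$ preserves the subsolution property (the KPP cancellation), and choosing $d$ large drops the bounded cap of $(1-2\delta)\phi^{\mu,d,A_\omega}$ below $\inf_{x\le x_0}u_0>0$ while keeping its tail below $(1-\delta)e^{-\mu x}\le u_0$; thus $(1-2\delta)\phi^{\mu,d,A_\omega}(0,\cdot)\le v(0,\cdot)$. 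Comparison together with \eqref{decay-rate-eq1} gives $\liminf_{x\to\infty}\inf_{t\ge0}w(t,x)\ge1-2\delta$, and $\delta\to0$ finishes the lower bound, completing the proof when combined with the previous paragraph.

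The hard part is the sharp constant $1$, uniform in $t\ge0$, when $a(\theta_t\omega)$ need not be bounded. A naive comparison with a multiple of the wave fails, because dominating a general $u_0$ (which may carry a bump in the bulk) forces a translation of the barrier, and any translation corrupts the tail constant. The positive mean drift $(\hat a-\mu^2)/\mu>0$, i.e.\ $\mu<\sqrt{\hat a}$, is precisely what keeps the bulk excess from being carried to $+\infty$, and it is the essential ingredient; the possibly unbounded growth rate is absorbed without cost by the exact reduction $\tilde w(t,x)=W(t,x+B(t))$ to the constant-coefficient heat equation.
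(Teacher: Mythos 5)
Your proof is correct, but your upper bound takes a genuinely different route from the paper's. The paper proves \emph{both} bounds by the two-exponential barrier method it uses throughout: from \eqref{Stability - eq1 } and $U(x,\omega)\le e^{-\mu x}$ it sandwiches $u_0(\cdot+C(0;\omega,\mu))$ between $(1-\varepsilon)e^{-\mu x}-d\,e^{A_\omega(t)-\tilde\mu x}$ and $(1+\varepsilon)e^{-\mu x}+d\,e^{A_\omega(t)-\tilde\mu x}$ for $d\gg1$, checks via the corrector $A_\omega$ of Lemma \ref{sub-sol-lem} that these are respectively a subsolution (on the set where positive) and a supersolution of \eqref{moving coordinate -eq1}, and concludes by comparison and $\varepsilon\to0$; the large $d$ absorbs the bulk of $u_0$ because $e^{-\tilde\mu x}$ decays faster than $e^{-\mu x}$. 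Your lower bound is essentially this same argument: your $(1-2\delta)$-scaling of $\phi^{\mu,d,A_\omega}$, legitimate by the KPP cancellation you invoke (for $0<\lambda<1$ one has $\mathcal{G}^{\omega,\mu}(\lambda\phi)=\lambda\mathcal{G}^{\omega,\mu}(\phi)-a(\theta_t\omega)\lambda(1-\lambda)\phi^2\le0$ where $\phi\ge0$), coincides with the paper's $(1-\varepsilon)e^{-\mu x}-d\,e^{A_\omega(t)-\tilde\mu x}$ up to renaming $d$. Your upper bound, however, replaces the explicit supersolution by linearization (dropping the sink $-a(\theta_t\omega)e^{-\mu x}w^2$) plus the exact reduction $\tilde w(t,x)=W(t,x+B(t))$ to the heat equation and a Gaussian estimate driven by the positive mean drift $B(t)/t\to(\hat a-\mu^2)/\mu>0$; this buys independence from the corrector $A_\omega$ on that side (only the Cesàro limit on $\Omega_0$ is used, so $\mu<\sqrt{\hat a}$ would suffice there, slightly weaker than $\mu<\sqrt{\underline{a}}$), at the price of invoking uniqueness/representation for the linear drift equation. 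Two points you should make explicit: (i) to compare $w\le\tilde w$ you need $w$ bounded, which is not given a priori; it follows by first comparing $v$ with $Me^{-\mu x}$, $M=\sup_x u_0(x)e^{\mu x}<\infty$, since every positive multiple of $e^{-\mu x}$ is a supersolution of \eqref{moving coordinate -eq1} (indeed $\mathcal{G}^{\omega,\mu}(Me^{-\mu x})=a(\theta_t\omega)M^2e^{-2\mu x}>0$); (ii) the ``routine'' Gaussian estimate does need the split $t\le T$ (where $B\ge -B_0$ and $\sqrt{2t}\le\sqrt{2T}$) versus $t\ge T$ (where $B(t)\ge\kappa' t$ and $\frac{x-R}{\sqrt{2t}}+\kappa'\sqrt{t/2}\ge\sqrt{2\kappa'(x-R)}$ by AM--GM), but both regimes indeed give the claimed uniformity in $t\ge0$, so the argument goes through.
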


\begin{proof} Since $u_0$ satisfies \eqref{Stability - eq1 }, then for every $\varepsilon >0$, there is $x_{\varepsilon;\omega}\gg 1$ such that
$$
1-\varepsilon\leq \frac{u_0(x+C(0;\omega,\mu))}{U(x,\omega)}\leq 1+\varepsilon\quad \forall\, x\geq x_{\varepsilon;\omega}.
$$
Let $A_\omega(t)$ be as in Lemma \ref{sub-sol-lem}.
Since $e^{-\mu x}-d_{\omega}e^{A_\omega(t)-\tilde{\mu}x}\leq U(x,t)\leq e^{-\mu x}$, then
\begin{equation}\label{stab-eq-010}
(1-\varepsilon)e^{-\mu x}-(1-\varepsilon)d_{\omega}e^{A_\omega(0)-\tilde{\mu}x}\leq u_0(x+C(0;\omega,\mu))\leq (1+\varepsilon)e^{-\mu x},\quad \forall \, x\geq x_{\varepsilon;\omega}.
\end{equation}
We claim that there is $d\gg 1$ such that
\begin{equation}\label{claim1-stab-lem2}
(1-\varepsilon)e^{-\mu x}-de^{A_\omega(0)-\tilde{\mu}x} \leq u_0(x+C(0;\omega,\mu))\leq (1+\varepsilon)e^{-\mu x}+de^{A_{\omega}(0)-\tilde{\mu}x} \quad\, \forall\, x\in\R.
\end{equation}
Indeed, observe that
$$
\|u_0\|_{\infty}e^{\tilde{\mu}x_{\varepsilon;\omega}+|A_{\omega}(0)|}e^{A_\omega(0)-\tilde\mu x}\geq\|u_0\|_{\infty}e^{\tilde{\mu}x_{\varepsilon,\omega}}e^{-\tilde\mu x_{\varepsilon;\omega}}\geq u_0(x+C(0;\omega,\mu)), \quad \forall\, x\leq x_{\varepsilon,\omega}.
$$
Hence
\begin{equation}\label{stab-eq-011} u_0(x+C(0;\omega,\mu))\leq (1+\varepsilon)e^{-\mu x}+d_{\varepsilon,\omega
}e^{A_\omega(0)-\tilde{\mu}x} \quad\forall\, x\in\R,
\end{equation}
where $d_{\varepsilon;\omega}=:\|u_0\|_{\infty}e^{\tilde{\mu}x_{\varepsilon;\omega}+|A_\omega(0)|}$. On the other hand, for every $d>1$, the function $\R\ni x\mapsto (1-\varepsilon)e^{-\mu x}-d e^{A_\omega(0)-\tilde{\mu}x}$ attains it maximum value at $x_{d}=\frac{\ln(\frac{d\tilde{\mu}e^{A_\omega(0)}}{(1-\varepsilon)\mu})}{\tilde{\mu}-\mu}$. Hence, using the fact that $\lim_{d\to\infty}x_{d}=\infty$ and $$
\lim_{d\to\infty}((1-\varepsilon)e^{-\mu x_d}-d e^{A_\omega(0)-\tilde{\mu}x_d})=0,
$$
there is $\tilde{d}_{\varepsilon;\omega}\gg (1-\varepsilon)d_{\omega}$ such that  $x_{\tilde{d}_{\varepsilon;\omega}}\geq x_{\varepsilon;\omega}$ and  $$
(1-\varepsilon)e^{-\mu x_{\tilde{d}_{\varepsilon;\omega}}}-\tilde{d}_{\varepsilon;\omega} e^{A_\omega(0)-\tilde{\mu}x_{\tilde{d}_{\varepsilon;\omega}}} \leq \inf_{x\leq x_{\varepsilon;\omega}}u_0(x+C(0;\omega,\mu)).$$
Combining this with \eqref{stab-eq-010}, we obtain that
\begin{equation}\label{stab-eq-012}
(1-\varepsilon)e^{-\mu x}-d e^{A_\omega(0)-\tilde{\mu}x} \leq u_0(x+C(0;\omega,\mu))\quad \forall\, x\in\R,\,\, \forall\, d\geq  \tilde{d}_{\varepsilon;\omega}.
\end{equation}
Therefore it follows from \eqref{stab-eq-011} and \eqref{stab-eq-012} that  \eqref{claim1-stab-lem2} holds for every $d\geq \max\{\tilde{d}_{\varepsilon;\omega} , d_{\varepsilon;\omega} \}$.
By direct computation as in the proof of Lemma \ref{sub-sol-lem}, it holds that for $d\gg 1$,
$$\mathcal{G}^{\omega,\mu}((1-\varepsilon)e^{-\mu x}-d e^{A_\omega(t)-\tilde{\mu}x}) \leq 0 , \quad \text{a.e in } t $$
on the set $D_{\varepsilon}:=\{(x,t)\in\R\times\R^+ \ |\ (1-\varepsilon)e^{-\mu x}-d e^{A_\omega(t)-\tilde{\mu}x}\geq 0 \}$. Thus, since $u(t,x+C(t;\omega,\mu);u_0,\omega)\ge 0$, comparison principle for parabolic equations yields that
\begin{equation} \label{stab-eq-013}
(1-\varepsilon)e^{-\mu x}-d e^{A_\omega(t)-\tilde{\mu}x}\leq u(t,x+C(t;\omega,\mu);u_0,\omega) \quad \forall\, x\in\R,\,\,\forall\, t\ge 0, \,\, d\gg 1.
\end{equation}
Similarly, it holds that
$$
\mathcal{G}^{\omega,\mu}((1+\varepsilon)e^{-\mu x}+d e^{A_\omega(t)-\tilde{\mu}x})\geq 0, \quad x\in\R,\,\, t\in\R.
$$
Thus, by comparison principle for parabolic equations, it holds that
$$
u(t,x+C(t;\omega,\mu);u_0,\omega)\leq (1+\varepsilon)e^{-\mu x}+d e^{A_\omega(t)-\tilde{\mu}x} \quad \forall\, x\in\R,\,\,\forall\, t\ge0, \,\, d\gg 1.
$$
Since $\varepsilon>0$ is arbitrarily chosen, the last inequality combined with \eqref{stab-eq-013} yield that
$$\lim_{x\to\infty}\frac{u(t,x+C(t;\omega,\mu);u_0,\omega)}{e^{-\mu x}}=1 \quad \text{uniformly in } t\ge 0. $$
So, the lemma follows.
\end{proof}

\begin{proof}[Proof of Theorem \ref{Stability tm}] Fix $\omega\in\Omega_0$. Let $u_0\in C^b_{\rm unif}(\R)$ satisfying \eqref{Stability - eq1 }. Then there is $\alpha\geq 1$ such that
$$ \frac{1}{\alpha}\leq \frac{u_0(x)}{U(x-C(0;\omega,\mu),\omega)}\leq \alpha,\quad \forall\, x\in\R.$$
Hence, comparison principle for parabolic equations implies that
$$
u(t,x;u_0,\omega)\leq u(t,x;\alpha U(\cdot-C(0;\omega,\mu),\omega)), \quad\ \forall\, x\in\R,\,\, \forall\, t\geq 0
$$
and
$$
U(x-C(t;\omega,\mu),\theta_t\omega) \leq u(t,x;\alpha u_0 ,\omega), \quad\ \forall\, x\in\R,\,\, \forall\, t\geq 0.
$$
Note that $$
(\alpha u)_t \,  \le\,  (\alpha u)_{xx} + a(\theta_t\omega)(\alpha u)(1-\alpha u).
$$
Hence it follows from comparison principle for parabolic equations that
$$
U(x-C(t;\omega,\mu),\theta_{t}\omega)\leq \alpha u(t,x;u_0,\omega), \quad \forall t\geq 0.
$$
Similarly, we have that
$$
u(t,x;u_0,\omega) \leq \alpha U(x-C(t;\omega,\mu),\theta_{t}\omega), \quad \forall t\geq 0.
$$
Thus for every $t\ge 0$, there is a unique $\alpha(t)\geq 1$ satisfying
\begin{equation}\label{part-metric}
\alpha(t):=\inf\{  \alpha\geq 1\ \ |\  \ \frac{1}{\alpha}\leq \frac{u(t,x;u_0)}{U(x-C(t;\omega,\mu),\theta_t\omega)}\leq \alpha \ \text{for every } x\in\R \}.
\end{equation}
Furthermore,  we have that $\alpha(t)\leq \alpha(\tau)$ for every $0\leq \tau\leq t$. Therefore
$$
\alpha_{\infty}:=\inf\{\alpha(t) \ |\ t\geq 0 \}=\lim_{t\to\infty}\alpha(t).
$$
Note that, to complete the proof of Theorem \ref{Stability tm}, it is enough to show that $\alpha_{\infty}=1$.

It is clear that $\alpha_{\infty}\geq 1$. Suppose by contradiction that $\alpha_{\infty}>1$.  Let $1<\alpha<\alpha_{\infty}$ be fixed. It follows from Lemma \ref{stabolity-lm2} that there is $x_{\alpha}\gg 1$ such that
\begin{equation}\label{s-eq01}
\frac{1}{\alpha}\leq \frac{u(t,x+C(t;\omega,\mu);u_0,\omega)}{U( x,\theta_t\omega)}\leq \alpha, \quad \forall\, x\geq x_{\alpha}, \,\, \forall\, t\ge 0.
\end{equation}
Let us set
$$
m_{\alpha}:=\frac{1}{\alpha_0}\inf_{t\geq 0, \, \, x\leq x_{\alpha}} U(x;\theta_t\omega )(>0),
$$
where $\alpha_0=\alpha(0)=\sup_{t\geq 0}\alpha(t)$. Hence it follows from the definition of $\alpha_0$ that
$$
m_{\alpha}\leq \min\{u(t,x+C(t;\omega,\mu);u_0,\omega), U(x,\theta_{t}\omega)\},\quad  \forall\, x\leq x_{\alpha},\,\,\forall\, t\geq 0.
$$
%Taking $\tau=1$, it follows from \eqref{claim2 -eq 1} and \eqref{claim2 -eq 2} that there exists $0<\delta_{\alpha}\ll 1$ such that  \begin{equation}\label{s-eq04} \frac{1}{\alpha e^{-\delta_{\alpha}}}\leq \frac{u(t,x_{\alpha}+1+C(t;\omega);u_0,\omega)}{U(x_{\alpha}+1,\theta_{t}\omega))} \leq \alpha e^{-\delta_{\alpha}},\quad \forall\, t\geq 1. \end{equation}

%Now, by Lemma \ref{stability-lm1} we can choose $0<\varepsilon\ll 1$ such that {\bl $|O_{\varepsilon}(\omega)|=\infty$ (replace by $\int_{O_\varepsilon}a(\theta_s\omega)ds=\infty$)}.

%{\rd {\bf To be removed if blue is correct} Let $0< \delta\ll 1$ be choosing such that $$\alpha<e^{- \delta^2}\alpha_{\infty} \quad \text{and} \quad  \varepsilon\Big((\alpha_{\infty}-1)-\alpha_0(1-e^{- \delta^2})\Big)m_{\alpha}> \delta.$$}

 Now, since {\bf (H1)} holds, there is $T=T(\omega)\ge 1$ such that
\begin{equation}\label{w-00101}
0<\frac{\underline{a}T}{2}< \int_{s}^{s+T}a(\theta_\tau\omega)ds<2\overline{a} T<\infty, \quad \forall s\in\R.
\end{equation}
Let $0<\delta\ll 1$ satisfy
\begin{equation}\label{w-00100}
\alpha< e^{-2\delta T\overline{a}}\alpha_\infty \quad \text{and}\quad \left( (\alpha_\infty-1)-\alpha_0(1-e^{-2\delta T\overline{a}}) \right)m_\alpha>\delta.
\end{equation}

We claim that
\begin{equation}\label{s-eq00060}
\alpha((k+1)T)\leq e^{-\delta\int^{(k+1)T}_{kT}a(\theta_s\omega)ds}\alpha( kT), \quad \forall \, k\geq 0.
\end{equation}
Indeed, by taking $u_{k}(t,x)=e^{\delta\int_{kT}^{t+kT}a(\theta_s\omega)ds}u(t+kT,x+C(t+kT;\omega,\mu);u_0,\omega)$,
$U_{k}(t,x)=U(x;\theta_{t+kT}\omega)$,  $a_{k}(t)=a(\theta_{t+kT}\omega)$, and $\alpha_k=\alpha(kT)$ it follows from \eqref{w-00101} that
\begin{align}\label{s-eq060}
\partial_t u_{k}&=  \delta a_k(t) u_{k} +\partial_{xx}u_{k}+ \frac{\mu^2+a_{k}(t)}{\mu}\partial_x u_{k} +a_{k}(t)\Big(1-u(t+kT,x+C(t+kT;\omega,\mu);u_0,\omega)\Big)u_{k}\nonumber\\
 &= \partial_{xx}u_{k}+ \frac{\mu^2+a_{k}(t)}{\mu}\partial_x u_{k} +a_{k}(t)(1-u_{k})u_{k}  + a_{k}(t)\Big( (1-e^{-\delta \int_{kT}^{t+kT}a(\theta_s\omega)ds})u_{k}+\delta \Big)u_{k}\nonumber\\
 & \leq  \partial_{xx}u_{k}+ \frac{\mu^2+a_{k}(t)}{\mu}\partial_x u_{k} +a_{k}(t)(1-u_{k})u_{k}  + a_{k}(t)\Big( (1-e^{-2\delta T \overline{a}})u_{k}+\delta \Big)u_{k}
\end{align}
for every $t\in(0,T)$, $x\in\R$, and $k\geq 0$. On the other hand, it follows from \eqref{w-00100} and the fact that $\alpha_\infty\leq \alpha_k\leq \alpha_0$, that
\begin{align}\label{s-eq070}
 &\partial_t(\alpha_{k} U_{k})-\partial_{xx}(\alpha_{k} U_{k})-\frac{\mu^2+a_{k}(t)}{\mu}\partial_x(\alpha_{k} U_{k}) \cr
 =&  a_{k}(t)(1-U_{k})(\alpha_{k} U_{k})\cr
 =& a_{k}(t) (1-(\alpha_k U_k))(\alpha_k U_k) +a_k(t)\left( (1-e^{-2\delta T\overline{a}})(\alpha_k U_k) +\delta \right)(\alpha_k U_k)  \cr
 &  + a_k(t)\left( \left( (\alpha_k-1)-(1-e^{-2\delta T\overline{a}})\alpha_k\right) U_{k} -\delta \right)(\alpha_k U_k)  \cr
 \ge &  a_{k}(t) (1-(\alpha_k U_k))(\alpha_k U_k) +a_k(t)\left( (1-e^{-2\delta T\overline{a}})(\alpha_k U_k) +\delta \right)(\alpha_k U_k)  \cr
 &  + a_k(t)\left( \left( (\alpha_\infty-1)-(1-e^{-2\delta T\overline{a}})\alpha_0\right)m_\alpha -\delta \right)(\alpha_k U_k)  \cr
 \ge &  a_{k}(t) (1-(\alpha_k U_k))(\alpha_k U_k) +a_k(t)\left( (1-e^{-2\delta T\overline{a}})(\alpha_k U_k) +\delta \right)(\alpha_k U_k)
\end{align}
for $  x\leq x_{\alpha}$, $0\leq t\leq T$, and $k\geq 0$. Therefore, it follows from the definition of $\alpha_{k}$, \eqref{s-eq01}, the fact that $e^{\delta\int_{kT}^{(k+1)T}a(\theta_{s}\omega)ds} \alpha\leq \alpha_{\infty}\leq \alpha_{k}$, and comparison principle for parabolic equations that
\begin{equation*}
e^{\delta\int_{kT}^{t+kT}a(\theta_{s}\omega)ds}u(t+kT,x+C(t+kT;u_0,\omega)\leq \alpha_{k} U(x,\theta_{t+kT}\omega), \quad \forall\, x\leq x_{\alpha},\, t\in[0,T],\  k\geq 0.
\end{equation*}
That is
$$
u(t+kT,x+C(t+kT;\omega);u_0)\leq e^{-\delta\int_{kT}^{(k+1)T}a(\theta_{s}\omega)ds}\alpha_{k} U(x,\theta_{t+kT}\omega), \quad \forall\, x\leq x_{\alpha},\, t\in[0,T], \ k\ge 0.
$$
Since, $\alpha \leq e^{-\delta\int_{kT}^{(k+1)T}a(\theta_{s}\omega)ds}\alpha_{\infty} \leq e^{-\delta\int_{kT}^{(k+1)T}a(\theta_{s}\omega)ds}\alpha_{k}$, it follows from \eqref{s-eq01} that
$$
u(t+kT,x+C(t+kT;\omega);u_0,\omega)\leq e^{-\delta\int_{kT}^{t+kT}a(\theta_{s}\omega)ds}\alpha_{k} U(x,\theta_{t+t_{0,k}^{\varepsilon}}\omega), \,\, \forall\, x\geq x_{\alpha},\, t\in[0,T],\ k\geq 0.
$$
Therefore, for every $k\geq 1$, it holds that
\begin{equation}\label{s-eq06}
u(t+kT,x+C(t+kT;u_0,\omega);u_0,\omega)\leq e^{-\delta\int_{kT}^{t+kT}a(\theta_{s}\omega)ds}\alpha_{k} U(x,\theta_{t+kT}\omega), \,\, \forall\, x\in\R,\, t\in[0, T].
\end{equation}

Similarly, interchanging $u_{k}$ and $U_{k}$ in  \eqref{s-eq060} and \eqref{s-eq070}, we obtain that
\begin{equation}\label{s-eq07}
U(x,\theta_{t+kT}\omega)\leq e^{-\delta\int_{kT}^{t+kT}a(\theta_{s}\omega)ds}\alpha_k u(t+kT,x+C(t+kT;u_0,\omega);u_0,\omega) , \quad \forall\, x\in\R,\, t\in[0,T].
\end{equation}
Hence inequality \eqref{s-eq00060}  follows from \eqref{s-eq06} and \eqref{s-eq07}.  Thus, by induction we obtain that
$$
\alpha_{\infty}\leq \alpha((k+1)T)\leq e^{-\delta\sum_{i=0}^{k}\int_{iT}^{(i+1)T}a(\theta_s\omega)ds}\alpha(0)= e^{-\delta\int_0^{(k+1)T}a(\theta_s\omega)ds}\alpha_0, \quad \forall k\geq 0.
$$
But for $\omega\in\Omega_0$, it holds that $\int_0^{\infty}a(\theta_s\omega)ds=\infty$. Therefore, letting $k\to\infty$ in the last inequality, we obtain that $\alpha_{\infty}\leq 0$. Which is not possible because $\alpha_{\infty}\geq 1$. Therefore $\alpha_{\infty}=1$, which completes the proof of the Theorem.
\end{proof}

\subsection{Existence and stability of random transition fronts of \eqref{real-noise-eq}}

In this subsection, we consider the existence and stability of random transition fronts of
\eqref{real-noise-eq} by applying the established results for  \eqref{Main-eq}. We have

 \begin{coro}
\label{real-noise-cor}
Assume {\bf (H3)}.
  Let $Y(\omega)$ be the random equilibrium solution of \eqref{real-noise-eq} given in \eqref{random-equilibrium-1}. Then for any given $0<\mu<1$, \eqref{real-noise-eq} has a random transition wave solution $u(x,t)=U(x-C(t;\omega,\mu);\theta_t\omega)$ with $C(t;\omega,\mu)=\int_0^t \frac{\mu^2+Y(\theta_s\omega)}{\mu^2}ds$ and connecting $u\equiv 0$ and the random equilibrium solution $Y(\omega)$. Moreover the following holds.
 \begin{itemize}
 \item[(i)] $0<U(x,\omega)<Y(\omega)$ for a.e. $\omega\in\Omega$.

 \item[(ii)] $\lim_{x\to-\infty}\frac{U(x,\theta_t\omega)}{Y(\theta_t\omega)}=1$ and $\lim_{x\to\infty} \frac{U(x,\theta_t\omega)}{e^{-\mu x}Y(\theta_t\omega)}=0$ uniformly in $t$ for a.e. $\omega\in\Omega$.

 \item[(iii)] For a.e. $\omega\in\Omega$ and every $u_0\in C^b_{\rm unif}(\R)$ satisfying that
  \begin{equation}\label{real-noise-thm-eq}
\inf_{x\leq x_0}u_0(x)>0 \quad \forall\ x_0\in\R,\quad \lim_{x\to\infty}\frac{u_0(x)}{U(x-C(0;\omega,\mu),\omega)}=1,
  \end{equation}
  we have
  $$
\lim_{t\to\infty}\Big\|\frac{u(t,\cdot;u_0,\omega)}{U(\cdot-C(t;\omega,\mu),\omega)}-1\Big\|_{\infty}  =0.
  $$
 \end{itemize}
\end{coro}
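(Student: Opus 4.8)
The plan is to reduce the corollary to the theory already established for \eqref{Main-eq} by means of the substitution that converts \eqref{real-noise-eq} into \eqref{Main-eq}. Recall from Theorem \ref{real-noise-tm1} that $Y(\omega)$ given by \eqref{random-equilibrium-1} is a spatially homogeneous random equilibrium of \eqref{real-noise-eq}; that is, $t\mapsto Y(\theta_t\omega)$ is $C^1$ and solves $\frac{d}{dt}Y(\theta_t\omega)=Y(\theta_t\omega)\big(1+\xi(\theta_t\omega)-Y(\theta_t\omega)\big)$. First I would set $v(t,x)=u(t,x)/Y(\theta_t\omega)$ and verify by a direct computation --- using this ODE to cancel the terms linear in $v$ --- that $u$ solves \eqref{real-noise-eq} if and only if $v$ solves \eqref{Main-eq} with $a(\theta_t\omega)=Y(\theta_t\omega)$. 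This single identity reduces every assertion of the corollary to its counterpart for \eqref{Main-eq} with this choice of $a$.

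The next step is to confirm that $a(\theta_t\omega):=Y(\theta_t\omega)$ is an admissible coefficient for \eqref{Main-eq}: positive, with locally H\"older continuous sample paths inherited from $\xi^\omega$ through the integral representation \eqref{random-equilibrium-1}, measurable in $\omega$, and satisfying {\bf (H1)}. The bounds $0<Y_{\inf}(\theta_\cdot\omega)\le Y_{\sup}(\theta_\cdot\omega)<\infty$ come from Theorem \ref{real-noise-tm1}(1), while {\bf (H1)} for $a=Y$, namely $0<\underline a=\underline Y\le\overline a=\overline Y<\infty$ with strict positivity of $\underline Y$ ensured by $-1<\underline\xi$ in {\bf (H3)}, is exactly Theorem \ref{real-noise-tm1}(3). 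Granting this, Theorem \ref{Existence of random transition front}(1) supplies, for each admissible $\mu$, a monotone random transition front $v(t,x)=V^\mu(x-C(t;\omega,\mu),\theta_t\omega)$ of \eqref{Main-eq} with $0<V^\mu<1$, $C(t;\omega,\mu)=\int_0^t\frac{\mu^2+Y(\theta_s\omega)}{\mu}\,ds$, and the tail normalization \eqref{Exist-tm-eq2}. Setting $U(x,\omega):=Y(\omega)V^\mu(x,\omega)$, so that $u(t,x)=U(x-C(t;\omega,\mu),\theta_t\omega)$ is a front of \eqref{real-noise-eq} connecting $0$ and $Y(\theta_t\omega)$, assertion (i) is immediate ($0<U<Y$ is $0<V^\mu<1$ scaled by $Y$), and the tail limits in (ii) follow from \eqref{Exist-tm-eq2} for $V^\mu$ together with $0<Y_{\inf}\le Y\le Y_{\sup}<\infty$, which keeps the factor $Y(\theta_t\omega)$ uniform in $t$.

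For the stability assertion (iii) I would transform the perturbation data as well. Given $u_0$ satisfying \eqref{real-noise-thm-eq}, set $v_0:=u_0/Y(\omega)$. Since $Y(\omega)$ is a positive constant in $x$ and $U(\cdot,\omega)=Y(\omega)V^\mu(\cdot,\omega)$, the two conditions in \eqref{real-noise-thm-eq} translate verbatim into the hypotheses \eqref{Stability - eq1 } for $v_0$ relative to $V^\mu$: positivity $\inf_{x\le x_0}v_0(x)>0$ is clear, and $\frac{u_0(x)}{U(x-C(0;\omega,\mu),\omega)}=\frac{v_0(x)}{V^\mu(x-C(0;\omega,\mu),\omega)}$ so the ratio tends to $1$. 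Applying Theorem \ref{Stability tm} to \eqref{Main-eq} with $a=Y$ yields $\lim_{t\to\infty}\big\|\frac{v(t,\cdot;v_0,\omega)}{V^\mu(\cdot-C(t;\omega,\mu),\theta_t\omega)}-1\big\|_\infty=0$. Because $v(t,x;v_0,\omega)=u(t,x;u_0,\omega)/Y(\theta_t\omega)$, the factors $Y(\theta_t\omega)$ cancel in $\frac{u(t,\cdot;u_0,\omega)}{U(\cdot-C(t;\omega,\mu),\theta_t\omega)}=\frac{v(t,\cdot;v_0,\omega)}{V^\mu(\cdot-C(t;\omega,\mu),\theta_t\omega)}$, and (iii) follows at once.

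The change of variables and the translation of hypotheses are routine once set up; the only genuine work, and the step I expect to be the main obstacle, is the verification in the second paragraph that $a=Y$ meets all standing requirements of \eqref{Main-eq}. Specifically, one must know that {\bf (H1)} for $a=Y$ holds on a full-measure, $\theta_t$-invariant set $\Omega_0$, so that $\underline Y$ and $\overline Y$ are the deterministic constants entering \eqref{minimal-speeds-eq}, and that the local H\"older regularity of $t\mapsto Y(\theta_t\omega)$ --- needed for \eqref{Main-eq} to be classically well posed --- descends from that of $\xi^\omega$ through \eqref{random-equilibrium-1}. These structural facts are precisely what Theorem \ref{real-noise-tm1} provides, after which all three conclusions are direct consequences of Theorems \ref{Existence of random transition front}(1) and \ref{Stability tm}.
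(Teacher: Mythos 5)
Your proposal is correct and follows essentially the same route as the paper: the change of variables $\tilde u = u/Y(\theta_t\omega)$ converts \eqref{real-noise-eq} into \eqref{Main-eq} with $a(\omega)=Y(\omega)$, Theorem \ref{real-noise-tm1} supplies the admissibility of this coefficient (in particular {\bf (H1)}), Theorems \ref{Existence of random transition front}(1) and \ref{Stability tm} give the front $\tilde U$ and its stability, and scaling back by $U(x,\omega)=Y(\omega)\tilde U(x,\omega)$ yields all three assertions. Your write-up is somewhat more explicit than the paper's (notably in transferring the stability hypotheses \eqref{real-noise-thm-eq} to \eqref{Stability - eq1 } and in flagging the H\"older regularity of $t\mapsto Y(\theta_t\omega)$), but the argument is the same.
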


\begin{proof}
 First of all, observe that $0<Y(\omega)<\infty$. Let $\tilde u=\frac{u}{Y(\theta_t\omega)}$ and drop the tilde. We have
\begin{equation}
\label{real-noise-eq1}
u_t=u_{xx}+Y(\theta_t\omega) u(1-u).
\end{equation}
Clearly, \eqref{real-noise-eq1} is of the form \eqref{Main-eq} with $a(\omega)=Y(\omega)$.

Let $0<\mu<1$ be given. Note that by Theorems  \ref{real-noise-tm1},  \ref{Existence of random transition front}, and  \ref{Stability tm}, \eqref{real-noise-eq1} has a random transition wave solution $0<\tilde u(t,x)<1$, $\tilde{u}(t,x)=\tilde{U}(x-C(t;\omega,\mu),\theta_t\omega)$ connecting $\tilde{u}\equiv 0$ and $\tilde{u}\equiv 1$ and   satisfying
$$
\lim_{x\to-\infty}\tilde{U}(x,\theta_t\omega)=1 \quad \text{and}\quad \lim_{x\to\infty}\frac{\tilde U(x,\theta_t\omega)}{e^{-\mu x}} =1, \quad \text{uniformly in }\ t,$$
and
$$
\lim_{t\to\infty}\Big\| \frac{\tilde u(t,\cdot;\tilde u_0,\omega)}{\tilde U(\cdot-C(t,\omega),\theta_t\omega)}-1 \Big\|_{\infty}=0
$$
for every $u_0\in C^b_{\rm unif}(\R)$ satisfying \eqref{real-noise-thm-eq}.
Let us set $U(x,\omega)=\tilde{U}(x,\omega)Y(\omega)$ for $x\in\R, \ \omega\in\Omega$. Thus, the function $u(t,x)=U(x-C(t;\omega,\mu),\theta_t\omega)$ is a stable random transition wave solution of \eqref{real-noise-eq} connecting $0$ and $Y(\theta_t\omega)$ and satisfies the desired properties.
\end{proof}

\section{Transition front of nonautonomous Fisher-KPP equations}\label{Sec nonautonomous}

In this section we consider the nonautonomous Fisher-KPP equation \eqref{nonautonomous-eq}.

For given $u_0\in C_{\rm unif}^b(\R)$ with $u_0\ge 0$, let $u(t,x;u_0,\sigma_\tau a_0)$ be the solution of
$$
u_t=u_{xx}+\sigma_\tau a_0(t) u(1-u),\quad x\in\R,\, t>0,
$$
with $u(0,x;u_0,\sigma_\tau a_0)=u_0(x)$, where $\sigma_s a_0(t)=a_0(s+t)$.
We have the following theorems on the existence and stability of transition front solutions, and spreading speeds  of \eqref{nonautonomous-eq}

 \begin{tm}
 \label{nonautonomous-thm1}
 Suppose that {\bf (H2)} holds.
  \begin{itemize}
 \item[(1)] For any given $\mu\in(0, \sqrt{\underline{a}_0})$,
      \eqref{nonautonomous-eq} has a transition wave solution $u(t,x)=U^\mu(x-C(t;\mu),t)$ with  $C(t;\mu)=\int_0^t c(s;\mu)ds$, where
\begin{equation}\label{Exist-2-tm-eq1}
c(t;\mu)=\frac{\mu^2+a_0(t)}{\mu},
 \end{equation}
and hence  $\underline{c}=\frac{\mu^2+\underbar a_0}{\mu} >\underline{c}^*_0$.
Moreover,
\begin{equation}\label{Exist-2-tm-eq2}
\lim_{x\to \infty}\sup_{t\in\R}\Big|\frac{U^\mu(x,t)}{e^{-\mu x}}-1\Big|=0 \quad \text{and}\quad \lim_{x\to -\infty}\sup_{t\in\R}|U^\mu(x,t)-1|=0.
\end{equation}

\item[(2)] There is a transition front of \eqref{nonautonomous-eq}  with least mean speed  $\underline{c}^*_0$.

\item[(3)] There is no transition front of \eqref{nonautonomous-eq}  with  least mean  speed less than $\underline{c}^*_0$.
\end{itemize}

 \end{tm}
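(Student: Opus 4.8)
The plan is to run the deterministic analogue of the proof of Theorem \ref{Existence of random transition front}, reading $a_0(t)$ for $a(\theta_t\omega)$ and the time shift $\sigma_{-\tau}a_0$ for $\theta_{-\tau}\omega$. Since \eqref{nonautonomous-eq} is already deterministic, every measurability claim and the appeal to the subadditive ergodic theorem disappear, so the three parts reduce, respectively, to a barrier/limit construction, a level-set construction, and a spreading-speed comparison. The enabling observation is that Lemmas \ref{prelim-lm3}, \ref{prelim-lm4} and \ref{prelim-lm6} are stated for a general $b\in C(\R,(0,\infty))$ with $0<\underline b\le\overline b<\infty$; since {\bf (H2)} gives $0<\underline{a}_0\le\overline{a}_0<\infty$ and these long-time averages are invariant under every shift $\sigma_s$, the lemmas apply verbatim with $b=\sigma_\tau a_0$ for each $\tau\in\R$.

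For part (1), fix $0<\mu<\tilde\mu<\min\{2\mu,\sqrt{\underline{a}_0}\}$. In the moving frame with speed $c(t;\mu)=\frac{\mu^2+a_0(t)}{\mu}$ the function $\phi^\mu_+(x)=\min\{1,e^{-\mu x}\}$ is a supersolution (Lemma \ref{prelim-lm3}), while Lemma \ref{prelim-lm4} supplies a matching subsolution that agrees with $\phi^\mu_+$ to leading order as $x\to\infty$. Comparison makes the profile monotone in the backward initial time, so I set
\begin{equation*}
U^\mu(x,t)=\lim_{\tau\to\infty} u\big(\tau,\,x+C(\tau;\sigma_{t-\tau}a_0,\mu);\phi^\mu_+,\sigma_{t-\tau}a_0\big),
\end{equation*}
which reads off the entire solution at absolute time $t$, and verify as in the proof of Theorem \ref{Existence of random transition front}(1) that $u(t,x)=U^\mu(x-C(t;\mu),t)$ is an entire transition front with the squeezed tails \eqref{Exist-2-tm-eq2}, using Lemma \ref{prelim-lm6} for the uniform limits and the sub/supersolution squeeze for the decay rate $e^{-\mu x}$. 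The identity $C(t;\mu)-C(s;\mu)=\int_s^t\frac{\mu^2+a_0(r)}{\mu}\,dr$ gives at once $\underline c=\liminf_{t-s\to\infty}\frac{C(t;\mu)-C(s;\mu)}{t-s}=\frac{\mu^2+\underline{a}_0}{\mu}$, and since $\mu\mapsto\mu+\underline{a}_0/\mu$ decreases on $(0,\sqrt{\underline{a}_0})$ to the endpoint value $2\sqrt{\underline{a}_0}=\underline{c}_0^*$, we obtain $\underline c>\underline{c}_0^*$.

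For part (2), I follow the proof of Theorem \ref{Existence of random transition front}(2) with the step function $u_0^*$ of \eqref{u-0-def}. By Lemma \ref{prelim-lm5} there is a level-set position $x(t)$ with $u(t,x(t);u_0^*,\sigma_{-t}a_0)=\tfrac12$; the normalized solutions $u(t,\,\cdot+x(t);u_0^*,\sigma_{-t}a_0)$ are monotone in the backward initial time on each side of the level set by Lemma \ref{minimal-front-lem1}(1), so their limit defines a critical profile $U(\cdot,t)$. The uniform tails $U(-\infty,t)=1$, $U(+\infty,t)=0$ and the transition-front relation for $U$ follow by sandwiching $U$ between translated $\mu$-fronts of part (1), using Lemma \ref{minimal-front-lem1}(2) to identify the limit and Theorem \ref{uniform-tail-tm} for the uniform left tail. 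For the least mean speed, the upper bound $\underline c\le 2\sqrt{\underline{a}_0}$ comes from comparing the subsolution $\tfrac12 U(\cdot-C(t),t)$ with the $\mu$-front launched at time $s$: this yields $C(t)-C(s)\le C(t;\mu)-C(s;\mu)+O(1)$, hence $\underline c\le\frac{\mu^2+\underline{a}_0}{\mu}$ for every $\mu<\sqrt{\underline{a}_0}$, and letting $\mu\to\sqrt{\underline{a}_0}^{-}$ gives the bound; the matching lower bound is part (3).

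Part (3) is the spreading-speed comparison of the proof of Theorem \ref{Existence of random transition front}(3). Given any transition front $U(x-C(t),t)$ connecting $1$ and $0$, the uniform limit $U(-\infty,t)=1$ lets me fit a fixed compactly supported $u_0$ with $0\le u_0\le U(\cdot,t)$ below the front at every time. Applying the lower spreading estimate for \eqref{nonautonomous-eq} (lower spreading speed $2\sqrt{\underline{a}_0}$, established in Part I \cite{SaSh1} as the deterministic counterpart of Theorem \ref{spreading-speeds-tm}, and uniform in the starting time since $\underline{a}_0$ is $\sigma_s$-invariant) to the equation started at time $s$, comparison gives $C(t)-C(s)\ge(2\sqrt{\underline{a}_0}-\varepsilon)(t-s)-K$ for $t-s\gg1$, so $\underline c\ge 2\sqrt{\underline{a}_0}-\varepsilon$, and $\varepsilon\to0$ closes the proof. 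I expect the main obstacle to be part (2): securing the uniform-in-$t$ tail control of the critical profile and pinning its least mean speed exactly at $\underline{c}_0^*$, where the $\mu$-front barriers, the comparison Lemma \ref{minimal-front-lem1}, and the uniform lower spreading bound must be combined; the one genuinely new bookkeeping point relative to the random case is the uniformity in the starting time $s$, which there was automatic because $\theta_s\omega\in\Omega_0$ for all $s$.
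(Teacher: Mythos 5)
Your proposal is correct and takes essentially the same approach as the paper: the paper's proof of Theorem \ref{nonautonomous-thm1} is itself only an indication to rerun the arguments of Theorem \ref{Existence of random transition front}(1)--(3) with $\sigma_t a_0$ in place of $\theta_t\omega$ --- the backward-in-initial-time limit starting from $\phi_+^\mu$ for part (1), the level-set-normalized evolution of the step function $u_0^*$ for part (2), and the spreading-speed comparison for part (3) --- which is exactly what you carry out. The points you flag explicitly (shift-invariance of $\underline{a}_0,\overline{a}_0$ so that Lemmas \ref{prelim-lm3}, \ref{prelim-lm4}, \ref{prelim-lm6} apply to every $\sigma_\tau a_0$, and uniformity in the starting time replacing the role of $\theta_s\omega\in\Omega_0$) are precisely what the paper leaves implicit.
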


\begin{tm}\label{nonautonomous-thm2}
Assume that {\bf (H2)} holds. Then given $\mu\in(0, \underline{\mu}^*)$, the transition wave solution $u(t,x)=U(x-C(t;\mu),t)$ with $\lim_{x\to\infty}\frac{U(x;t)}{e^{-\mu x}}=1$ and $C(t;\mu)=\int_0^tc(s;\mu)ds$  ($c(t;\mu)=\frac{\mu^2+a_0(t)}{\mu}$) is asymptotically stable, that is, for any $u_0\in C^b_{\rm unif}(\R)$ satisfying that
\begin{equation}\label{Stability-2-eq1}
\inf_{x\leq x_0}u_0(x)>0 \quad \forall\, x_0\in\R, \quad \lim_{x\to\infty}\frac{u_0(x)}{U(x-C(0;\mu),0)}=1,
\end{equation}
there holds
$$
\lim_{t\to\infty}\Big\| \frac{u(t,\cdot;u_0,a_0)}{U(\cdot-C(t;\mu),t)}-1 \Big\|_{\infty}=0.
$$
\end{tm}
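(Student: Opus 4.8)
The plan is to follow the same two-stage strategy used for the random equation in the proof of Theorem~\ref{Stability tm}, now in the purely deterministic setting, so that all measurability issues disappear and $a(\theta_t\omega)$ is everywhere replaced by $a_0(t)$. The transition front $u(t,x)=U(x-C(t;\mu),t)$ with the prescribed decay $\lim_{x\to\infty}U(x;t)/e^{-\mu x}=1$ is furnished by Theorem~\ref{nonautonomous-thm1}(1). The two stages are: first, a uniform-in-time control of the tail of the perturbed solution; second, a squeezing (part-metric) argument that drives the comparison constant to $1$.

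First I would prove the nonautonomous analogue of Lemma~\ref{stabolity-lm2}: under the normalization~\eqref{Stability-2-eq1},
$$\lim_{x\to\infty}\frac{u(t,x+C(t;\mu);u_0,a_0)}{e^{-\mu x}}=1 \qquad \text{uniformly in } t\ge 0.$$
Here I would invoke Lemma~\ref{prelim-lm4} with $b=a_0$ to obtain the auxiliary function $B_{a_0}(t)\in W^{1,\infty}_{\rm loc}(\R)\cap L^\infty(\R)$ and the threshold $d_{a_0}$, and use $e^{-\mu x}-d_{a_0}e^{(\tilde\mu/\mu-1)B_{a_0}(t)-\tilde\mu x}\le U(x,t)\le e^{-\mu x}$. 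For fixed small $\varepsilon>0$ I would sandwich the translated datum $u_0(\cdot+C(0;\mu))$ between the functions $(1\mp\varepsilon)e^{-\mu x}\mp d\,e^{(\tilde\mu/\mu-1)B_{a_0}(0)-\tilde\mu x}$ for $d$ large, exactly as in~\eqref{claim1-stab-lem2}, and then propagate these bounds forward by the comparison principle, using that these functions are respectively sub- and super-solutions of~\eqref{nonautonomous-eq} on the relevant half-plane. Letting $\varepsilon\downarrow0$ gives the claimed tail asymptotics.

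Next I would run the squeezing argument. Choosing $\alpha\ge1$ with $\tfrac1\alpha\le u_0/U(\cdot-C(0;\mu),0)\le\alpha$, the comparison principle (together with the fact that $\alpha u$ is a subsolution of the KPP equation) yields $\tfrac1\alpha U(x-C(t;\mu),t)\le u(t,x;u_0,a_0)\le \alpha U(x-C(t;\mu),t)$ for all $t\ge0$, so the part metric
$$\alpha(t):=\inf\Big\{\alpha\ge1 \ \big|\ \tfrac1\alpha\le \tfrac{u(t,x;u_0,a_0)}{U(x-C(t;\mu),t)}\le\alpha \ \ \forall\,x\in\R\Big\}$$
is well defined, non-increasing, with limit $\alpha_\infty\ge1$. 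Assuming for contradiction $\alpha_\infty>1$, I would fix $1<\alpha<\alpha_\infty$, use the tail estimate to control the ratio for $x\ge x_\alpha$, and set $m_\alpha:=\alpha(0)^{-1}\inf_{t\ge0,\,x\le x_\alpha}U(x,t)>0$ as a uniform floor on $\{x\le x_\alpha\}$. By {\bf (H2)} there is $T\ge1$ with $0<\underline{a}_0 T/2<\int_s^{s+T}a_0(\tau)d\tau<2\overline{a}_0 T$ for all $s$; choosing $\delta>0$ small enough that $\alpha<e^{-2\delta T\overline{a}_0}\alpha_\infty$ and $\big((\alpha_\infty-1)-\alpha(0)(1-e^{-2\delta T\overline{a}_0})\big)m_\alpha>\delta$, the same two computations as in~\eqref{s-eq060}--\eqref{s-eq070}, applied to $u_k(t,x)=e^{\delta\int_{kT}^{t+kT}a_0(s)ds}u(t+kT,x+C(t+kT;\mu);u_0,a_0)$ and to $\alpha_k U(x,t+kT)$, give via the comparison principle on $\{x\le x_\alpha\}$ and on $\{x\ge x_\alpha\}$ separately the decay relation $\alpha((k+1)T)\le e^{-\delta\int_{kT}^{(k+1)T}a_0(s)ds}\alpha(kT)$. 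Iterating and using $\int_0^\infty a_0(s)ds=\infty$ (forced by $\underline{a}_0>0$) would yield $\alpha_\infty\le 0$, contradicting $\alpha_\infty\ge1$; hence $\alpha_\infty=1$ and the theorem follows.

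The main obstacle is the sign bookkeeping in the region $\{x\le x_\alpha\}$, where the exponential tail ansatz is no longer informative and the decay relation must be extracted from the nonlinearity. There one splits $a_0(t)(1-U_k)(\alpha_k U_k)$ as in~\eqref{s-eq070} and uses the uniform floor $m_\alpha$ together with the smallness of $\delta$ to guarantee that the leftover term $a_0(t)\big((\alpha_k-1)-(1-e^{-2\delta T\overline{a}_0})\alpha_k\big)U_k-\delta$ has the correct nonnegative sign, so that $\alpha_k U(x,t+kT)$ stays a supersolution. Making this bound uniform in $k$ hinges on the two-sided control $\int_{kT}^{(k+1)T}a_0(s)ds\asymp T$ from {\bf (H2)}, which is the deterministic substitute for the ergodic averaging used in the random case; once this is in place the two half-plane comparisons glue together to produce the per-period contraction of $\alpha(t)$.
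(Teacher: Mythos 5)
Your proposal is correct and follows essentially the same route as the paper: the paper's own proof of Theorem~\ref{nonautonomous-thm2} is only an indication ("by the similar arguments as those in Theorem~\ref{Stability tm}"), and you have carried out precisely that adaptation --- the tail estimate (the nonautonomous analogue of Lemma~\ref{stabolity-lm2}, obtained from Lemma~\ref{prelim-lm4} with $b=a_0$) followed by the part-metric squeezing of $\alpha(t)$ with the period $T$ and contraction factor $e^{-\delta\int_{kT}^{(k+1)T}a_0(s)ds}$ supplied by {\bf (H2)}. One cosmetic slip: for the upper comparison $u(t,\cdot;u_0,a_0)\le\alpha\,U(\cdot-C(t;\mu),t)$ you need that $\alpha u$ (with $\alpha\ge 1$) is a \emph{super}solution, not a subsolution, of the KPP equation; the same sign slip appears as a typo in the corresponding display in the paper's proof of Theorem~\ref{Stability tm}.
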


Theorem  \ref{nonautonomous-thm1} (resp. Theorem \ref{nonautonomous-thm2})  can be proved by the similar arguments as those in Theorem \ref{Existence of random transition front} (resp. Theorem \ref{Stability tm}). In the following, we provide some indications for the proofs of Theorems \ref{nonautonomous-thm1} and \ref{nonautonomous-thm2}.

\begin{proof}[Indication of the proof of Theorem \ref{nonautonomous-thm1}]
For given $\mu>0$, let   $C(t;\mu)=\int_0^t c(s;\mu)ds$, where $c(t;\mu)$ is as in \eqref{Exist-2-tm-eq1}. Let
$u(t,x)=v(x-C(t;\mu),t)$. Then  $v(t,x)$ satisfies
\begin{equation}
\label{real-noise-moving-eq}
u_t=u_{xx}+c(t;\mu)v_x+a_0(t) v(1-v),\quad x\in\R.
\end{equation}

(1) It suffices to prove that for given $\mu\in(0,\sqrt{\underline{a}_0})$, \eqref{real-noise-moving-eq} has a solution
$v=U^\mu(t,x)$ satisfying \eqref{Exist-2-tm-eq2}.
To this end, for given $u_0\in C_{\rm unif}^b(\R)$ with $u_0\ge 0$, let
$v(t,x;u_0,\sigma_s a_0)$  be the solution of
\begin{equation}
\label{real-noise-moving-eq1}
u_t=u_{xx}+c(t;\mu)v_x+\sigma_s a_0(t) v(1-v),\quad x\in\R
\end{equation}
with $v(0,x;u_0,\sigma_s a_0)=u_0(x)$. Let $\phi_+^\mu(x)$ be defined as in \eqref{super-sol}, that is,
$$
\phi_+^\mu(x)=\min\{1,e^{-\mu x}\}\quad \forall\,\, x\in\R.
$$
Let $v^n(t,x)=v(t+n,x;\phi_+^\mu,\sigma_{-n}a_0)$. By the similar arguments as those in Theorem \ref{Existence of random transition front}(1),
$\lim_{n\to\infty}v^n(t,x)$ exists for all $t,x\in\R$ and $v=U^\mu(t,x):=\lim_{n\to\infty}v^n(t,x)$ is a solution of
 \eqref{real-noise-moving-eq} satisfying \eqref{Exist-2-tm-eq2}. This completes the proof of (1).

\smallskip

(2) Let $u_0^*$ be as in \eqref{u-0-def}, that is,
$$
u_0^*(x)=\begin{cases}
         1 , \quad x\leq 0\cr
         0 , \quad x>0.
       \end{cases}
$$
Then for any $t>0$, $u(t,x;u_0^*,\sigma_\tau a_0)$ is strictly decreasing in $x\in\R$ and
$u(t,-\infty;u_0^*,\sigma_\tau a_0)=1$, $u(t,\infty;u_0^*,\sigma_\tau a_0)=0$. Therefore, there is a unique $x(t;\tau)$ such that
$u(t,x(t;\tau);u_0^*,\sigma_\tau a_0)=1/2$. By the similar arguments as those in Theorem \ref{Existence of random transition front}(2).
it can be proved that $U(x,\tau):=\lim_{t\to\infty}u(t,x+x(t;\tau);u_0^*,\sigma_{-t+\tau}a_0)$ exists for all $x,\tau\in\R$, and
there is $C(t)$ such that $u(t,x)=U(x-C(t),t)$ is a transition front of \eqref{nonautonomous-eq} with least averge speed $c=\underline{c}^*_0$.

\smallskip
(3) Suppose that $u(t,x)=U(x-C(t),t)$ is a transition front solution of \eqref{Main-eq}. It can be proved by the similar arguments
as those in Theorem \ref{Existence of random transition front}(3) that $\underline{C}\ge \underline{c}_0^*$. (3) thus follows.
\end{proof}

\begin{proof}[Indication of the proof of Theorem \ref{nonautonomous-thm2}]
Suppose that $u_0\in C_{\rm unif}^b(\R)$ satisfies  \eqref{Stability-2-eq1}. By the similar arguments as those in  Theorem \ref{Stability tm}, it can be proved that
$$
\alpha(t)=\inf\{\alpha \ge 1\,| \frac{1}{\alpha}\le \frac{u(t,x;u_0,a_0)}{U(x-C(t;\mu),t)}\le\alpha\quad \forall\, x\in\R\}
$$
is well defined, and that
$\lim_{t\to\infty} \alpha(t)=1.$
The theorem then follows.
\end{proof}

 We conclude this section with some example of explicit function $a_0(t)$ satisfying {\bf (H2)}.

 \medskip

 Define the sequences $\{l_{n}\}_{n\geq 0}$ and $\{L_n\}_{n\geq 0}$ inductively by
 \begin{equation}\label{ln-Ln-def}
 l_0=0, \quad L_n=l_n+\frac{1}{2^{2(n+1)}}, \quad l_{n+1}=L_n+n+1, \ \ n\geq 0.
 \end{equation}
 Define $a_0(t)$ such that $a_0(-t)=a_0(t)$ for $t\in\R$ and
 \begin{equation}\label{a(t)-def}
 a_0(t)=\begin{cases}
 f_n(t)\qquad \text{if}\ t\in [l_n,L_n]\cr
 g_n(t) \qquad\ \text{if}\ t\in[L_n,l_{n+1}]\cr
 \end{cases}
 \end{equation}
 for $n\ge 0$,
 where $g_{2n}(t)=1$ and $g_{2n+1}(t)=2$ for $n\ge 0$, and $f_0(t)=1$, for $n\ge 1$, $f_{n}$ is H\"older's continuous on  $[l_{n},L_{n}]$, $f_n(l_n)=g_n(l_n)$, $f_{n}(L_{n})=g_n(L_{n})$, and satisfies
 $$
1\leq f_{2n}(t)\leq 2^n, \quad \max_{t}f_{2n}(t)=2^n, %\quad\int_{l_{2n+1}}^{L_{2n+2}} f_{2n}(t)dt=\frac{1}{2^{n+3}}
 $$
 and
 $$
\frac{1}{2^{n+1}}\leq f_{2n+1}(t)\leq 2, \quad \min_{t}f_{2n+1}(t)=2^{-(n+1)}. %\quad\int_{l_{2n+1}}^{L_{2n+2}} f_{2n}(t)dt=\frac{1}{2^{n+3}}
 $$
 It is clear  that $a_0(t)$ is locally H\"older's continuous, $
\inf a_{0}=0$, and  $\sup a_{0}=\infty$. Moreover, it can be verified that
$$
\underline{a_0}=1\quad \overline{a}_0=2.
$$
Hence $a_0(t)$ satisfies {\bf (H2)}.

\end{document}